\newcommand{\C}{{\mathbb C}}       
\newcommand{\R}{{\mathbb R}}       
\newcommand{\Rm}{\mathbb{R}^m}
\newcommand{\N}{{\mathbb N}}       %
\newcommand{\Z}{{\mathbb Z}}       
\newcommand{\DD}{{\mathcal D}}
\newcommand{\OO}{{\mathcal O}}
\newcommand{\ZZ}{{\mathcal Z}}
\newcommand{\CH}{{\mathcal Ch}}
\newcommand{\BB}{{\mathcal B}}
\newcommand{\GG}{{\mathcal G}}
\newcommand{\diam}{{\rm diam}}
\newcommand{\dist}{{\rm dist}}
\newcommand{\ra}{\rightarrow}
\newcommand{\rf}[1]{{(\ref{#1})}}
\newcommand{\supp}{{\operatorname{spt}}}
\newcommand{\ve}{{\varepsilon}}
\newcommand{\meas}{{\measuredangle}}
\newcommand{\sst}{{\rm Stop}}
\newcommand{\tree}{{\rm Tree}}
\newcommand{\rest}{{\lfloor}}
\newcommand{\stm}{\setminus}
\newtheorem{theorem}{Theorem}[section]
\newtheorem{lemma}[theorem]{Lemma}
\newtheorem{propo}[theorem]{Proposition}
\newtheorem{pr}[theorem]{Proposition}
\newtheorem*{lemma*}{Lemma}
\newtheorem{lm}[theorem]{Lemma}
\theoremstyle{definition}
\newtheorem{definition}[theorem]{Definition}
\theoremstyle{remark}
\newtheorem{rem}[theorem]{\bf Remark}
\newtheorem*{rem1}{\bf Remark}
\numberwithin{equation}{section}
\newcommand{\brem}{\begin{rem}}
\newcommand{\erem}{\end{rem}}
\begin{document}

\title{Calder\'on-Zygmund kernels and rectifiability in the plane}

\author{V. Chousionis, J. Mateu, L. Prat, and X. Tolsa}
\subjclass[2010]{Primary 42B20, 42B25.} 
\keywords{Calder\'{o}n-Zygmund singular integrals, rectifiability}
\thanks{Most of this work had been carried out in the first semester of 2011 while V.C was visiting the Centre de Recerca Matem\`atica in Barcelona and he feels grateful for the hospitality. V.C was supported by the Academy of Finland and the grant MTM2010-15657 (Spain). J.M and L.P are supported by grants 2009SGR-000420 (Generalitat de Catalunya) and
MTM2010-15657 (Spain). X.T is supported by grants 2009SGR-000420 (Generalitat de Catalunya) and MTM2010-16232
(Spain).}

\address{Vasilis Chousionis.  Departament de
Ma\-te\-m\`a\-ti\-ques, Universitat Aut\`onoma de Bar\-ce\-lo\-na, Catalonia}

\email{chousionis@mat.uab.cat}

\address{Joan Mateu.  Departament de
Ma\-te\-m\`a\-ti\-ques, Universitat Aut\`onoma de Bar\-ce\-lo\-na, Catalonia}

\email{mateu@mat.uab.cat}

\address{Laura Prat.  Departament de
Ma\-te\-m\`a\-ti\-ques, Universitat Aut\`onoma de Bar\-ce\-lo\-na, Catalonia}

\email{laurapb@mat.uab.cat}
\address{Xavier Tolsa. Instituci\'{o} Catalana de Recerca
i Estudis Avan\c{c}ats (ICREA) and Departament de
Ma\-te\-m\`a\-ti\-ques, Universitat Aut\`onoma de Bar\-ce\-lo\-na.
08193 Barcelona, Catalonia} \email{xtolsa@mat.uab.cat}

\begin{abstract}
Let $E \subset \C$ be a Borel set with finite length, that is, $0<\mathcal{H}^1 (E)<\infty$. By a theorem of David and L\'eger, the $L^2 (\mathcal{H}^1 \lfloor E)$-boundedness of the singular integral associated to the Cauchy kernel (or even to one of its coordinate parts $x / |z|^2,y / |z|^2,z=(x,y) \in \C$) implies that $E$ is rectifiable. We extend this result to any kernel of the form $x^{2n-1} /|z|^{2n}, z=(x,y) \in \C ,n \in \mathbb{N}$. We thus provide the first non-trivial examples of operators not directly related with the Cauchy transform whose $L^2$-boundedness implies rectifiability. 
\end{abstract}
\maketitle

\section{Introduction}
Let $\mu$ be a positive, continuous, that is without atoms, Radon measure  on the complex plane. The Cauchy transform with respect to $\mu$ of a function $f\in L^1_{loc}(\mu)$ is formally defined by
$$C_\mu(f)(z)=\int \frac{f(w)}{z-w} d\mu (w).$$ 
This integral does not usually exist for $z$ in the support of $\mu$ and to overcome this obstacle the truncated Cauchy integrals
$$C_{\mu,\ve} (f)(z)=\int_{|z-w|>\ve}\frac{f(w)}{z-w} d\mu (w), \quad z\in \C, \ve>0,$$
are considered for functions with compact support in any $L^p(\mu),\ 1\leq p\leq \infty$. The Cauchy transform is said to be bounded in $L^2(\mu)$ if there exists some absolute constant $C$ such that
$$\int |C_{\mu,\ve} (f)|^2 d \mu \leq C \int |f|^2 d \mu$$
for all $f \in L^2(\mu)$ and $\ve>0$.  

 We recall that a set in $\R^m$ 
is called $d$-rectifiable if it is contained, up to an $\mathcal{H}^d$-negligible set, in a countable union  of $d$-dimensional Lipschitz graphs; and a Radon measure $\mu$ is $d$-rectifiable if $\mu \ll \mathcal{H}^d$ and it is concentrated on a $d$-rectifiable set, that is, it vanishes out of a $d$-rectifiable set.

The problem of relating the geometric structure of $\mu$ with the $L^2(\mu)$-boundedness of the Cauchy transform has a long history and it is deeply related to rectifiability and analytic capacity. It was initiated by Calder\'on in 1977 with his celebrated paper \cite{cal}, where he proved that the Cauchy transform is bounded on Lipschitz graphs with small constant. 
In \cite{CMM}, Coifman, McIntosh and Meyer removed the small Lipschitz cosntant assumption. Later on David, in \cite{D1}, proved that the rectifiable curves $\Gamma$ for which the Cauchy transform is bounded in $L^2(\mathcal{H}^1 \lfloor \Gamma)$,  are exactly those which satisfy  the linear growth condition, i.e. $$\mathcal{H}^1(\Gamma\cap B(z,r)) \leq C r,\quad z\in \Gamma, r>0,$$ 
where $\mathcal{H}^1 \lfloor \Gamma$ denotes the restriction of the $1$-dimensional Hausdorff measure $\mathcal{H}^1$ on $\Gamma$ and $B(z,r)$ is the closed ball centered at $z$ with radius $r$. 

In the subsequent years there was intense research activity in the topic and new tools and machinery were introduced and studied extensively. 
From the results of Calder\'on, David, and others, soon it became clear that rectifiability plays
an important role in the understanding of the aforementioned problem. In \cite{pj2} Jones gave an intriguing characterization of rectifiability    using the so-called  $\beta$-numbers, which turned out to be very useful in connection with the Cauchy transform, see e.g. \cite{pj1}. In a series of innovative works, see e.g. \cite{DS1} and \cite{DS2}, David and Semmes developed the theory of uniform rectifiability for the geometric study of singular integrals in $\R^m$ on Ahlfors-David regular (AD-regular, for short) measures, that is, measures $\mu$ satisfying
 $$
\frac{r^d}{C} \leq \mu(B(z,r)) \leq C r^d \text{ for }  z \in \operatorname{spt}\mu
\text{ and } 0<r<{\rm diam}(\operatorname{spt}(\mu)),$$ 
for some fixed constant $C$.
Roughly speaking, David and Semmes
intended to find geometric conditions to characterize the AD-regular measures $\mu$ for which some nice singular integrals are bounded in $L^2(\mu)$. To this end they introduced the novel concept of  uniform rectifiability, which can be understood as a quantitative version of rectifiability.
In the $1$-dimensional case, a measure $\mu$ is called uniformly rectifiable if it is AD-regular (with $d=1$) 
and its support is contained in an AD-regular curve. The definition in the case $d>1$ is more technical and we omit it. 

The singular integrals that David and Semmes considered in their works are defined by
odd kernels $K(x)$, smooth on $\R^m \setminus \{0\}$ and satisfying the
usual conditions $|\nabla_j K(x)| \leq C |x|^{-(d+j)} , j=0,1,...$.
The most notable examples of such kernels are the Cauchy kernel and its higher dimensional analogues, the Riesz kernels $\frac{x}{|x|^{d+1}},\ x \in \Rm \stm \{0\}$. 
David showed in \cite{D1} and \cite{D2} that all such singular integrals are bounded in $L^2(\mu)$ when $\mu$ is $d$-uniformly rectifiable. In the other direction David and Semmes proved that the
$L^2(\mu)$-boundedness of all singular integrals in the class
described above forces the measure $\mu$ to be $d$-uniformly
rectifiable. 
 The fundamental question they posed reads as follows: \\
\textit{Does the $L^2(\mu)$-boundedness of the Riesz transform $\mathcal{R}_d$ associated with the kernel $x/|x|^{d+1}$ imply $d$-uniform rectifiability for $\mu$?}

Given three distinct points $z_1,z_2,z_3 \in \C$ their Menger curvature is 
$$c(z_1,z_2,z_3)=\frac{1}{R(z_1,z_2,z_3)},$$ where $R(z_1,z_2,z_3)$ is the radius of the circle passing through $x,y$ and $z$. By an elementary calculation, found by Melnikov \cite{Me} while studying analytic capacity, the Menger curvature is related to the Cauchy kernel  by the formula $$c(z_1,z_2,z_3)^2= \sum_{s \in S_3} \frac{1}{(z_{s_2}-z_{s_1})\overline{(z_{s_3}-z_{s_1})}},$$
where $S_3$ is the group of permutations of three elements. It follows immediately that the permutations of the Cauchy kernel are always positive. This unexpected discovery of Melnikov turned out to be very influential in the study of analytic capacity and the Cauchy transform. In particular it was a crucial tool in the resolution of Vitushkin's conjecture by David, in \cite{dvit}, and in the proof of the semiadditivity of analytic capacity in \cite{tsa}. 

Furthermore, in \cite{Me}, the notion of curvature of a Borel measure $\mu$ was introduced:
$$c^2(\mu)=\iiint c^2(z_1,z_2,z_3) d\mu(z_1)d\mu(z_2)d\mu(z_3).$$Given $\ve>0$, $c_\ve^2 (\mu)$ is the truncated version of $c^2(\mu)$, i.e.\ the above triple integral over the set
$$\{(z_1,z_2,z_3) \in \C ^3:|z_i-z_j| \geq \ve \, \text{for} \, 1\leq i,j \leq 3, \ i\neq j\}.$$
If $\mu$ is finite a Borel measure with linear growth (that is, $\mu(B(z,r))\leq Cr$ for all $z\in\operatorname{spt}\mu$, $r>0$) the relation between the curvature and the $L^2(\mu)$-norm of the Cauchy transform is evident by the following identity proved by Melnikov and Verdera \cite{MeV}:
\begin{equation}
\label{curvl2}
\|C_{\mu,\ve} (1)\|_{L^2(\mu)}^2=\frac{1}{6} c^2_\ve(\mu)+O(\mu (\C)),
\end{equation}
with $|O(\mu (\C))| \leq C \mu (\C)$. 

In \cite{MMV}, Mattila, Melnikov and Verdera settled the David and Semmes question in the case of the Cauchy transform, relying deeply on the use of curvature. They proved that if $E$ is a $1$-AD regular set in the complex plane, the Cauchy singular integral is bounded in $L^2(\mathcal{H}^1 \lfloor E)$ if and only if $E$ is contained in an AD-regular curve, which in the language of David and Semmes translates as $E$ being $1$-uniform rectifiable. 

Later on this result was pushed even further due to the following deep contribution of David and L\'eger. 

\begin{theorem}
[\cite{Leger}]
\label{orleg}
Let $E$ be a Borel set such that $0<\mathcal{H}^1(E)<\infty$,
\begin{enumerate}
\item if $c^2(\mathcal{H}^1 \lfloor E)<\infty$, then $E$ is rectifiable;
\item if the Cauchy transform is bounded in $L^2(\mathcal{H}^1 \lfloor E)$, then $E$ is rectifiable.
\end{enumerate}
\end{theorem}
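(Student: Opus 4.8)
\emph{Strategy.} The plan is to deduce (ii) from (i) via the Melnikov--Verdera identity \eqref{curvl2}, and to prove (i) itself by a stopping-time construction of a Lipschitz graph. For the first reduction, put $\mu=\HH^1\lfloor E$. Since \eqref{curvl2} needs $\mu$ to have linear growth, I would first recall the classical fact that the $L^2(\mu)$-boundedness of the Cauchy transform, for $\mu=\HH^1\lfloor E$ with $\HH^1(E)<\infty$, forces $\mu(B(x,r))\le Cr$ at all small scales around $\mu$-a.e.\ point; combined with the elementary density bound $\limsup_{r\to0}\mu(B(x,r))/(2r)\le1$, valid $\HH^1$-a.e.\ on any set of finite length, this lets one pass, for $\HH^1$-a.e.\ $x\in E$, to a ball $B_x=B(x,r_x)$ on which $\mu_x:=\mu\lfloor B_x$ has linear growth and for which the Cauchy transform stays bounded in $L^2(\mu_x)$ (since $\mu_x\le\mu$). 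Then \eqref{curvl2} applied to $\mu_x$, together with $\chi_{B_x}\in L^2(\mu_x)$, gives $c^2(\mu_x)\le C\mu_x(\C)<\infty$, so $E\cap B_x$ is rectifiable by (i); as the $B_x$ cover $\HH^1$-almost all of $E$ and $\R^2$ is Lindel\"of, a countable subfamily covers $E$ up to a null set, and (ii) follows.

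\emph{Reduction of (i) to a local graph lemma.} For (i) the plan is to establish, and then iterate, a statement of the type: there are absolute constants $\ve_0>0$ and $\eta\in(0,1)$ such that if $E$ is compact, $x_0\in E$, $R>0$,
\[
R\le\HH^1(E\cap B(x_0,R))\le 3R \quad\text{and}\quad c^2\big(\HH^1\lfloor(E\cap B(x_0,R))\big)\le\ve_0 R,
\]
then there is a Lipschitz graph $\Gamma$, with Lipschitz constant below an absolute bound, such that $\HH^1\big(E\cap\Gamma\cap B(x_0,R)\big)\ge\eta R$. Granting this, (i) follows: if $0<\HH^1(E)<\infty$ and $c^2(\HH^1\lfloor E)<\infty$, then at $\HH^1$-a.e.\ $x\in E$ the density of $\mu$ equals $1$ while $r^{-1}c^2(\HH^1\lfloor(E\cap B(x,r)))\to0$ (a Lebesgue-differentiation-type consequence of $c^2(\HH^1\lfloor E)<\infty$), so the hypotheses hold on small balls around a set of positive $\HH^1$-measure; a standard covering-and-exhaustion argument (a Vitali covering by such balls, followed by iteration on the remainder) then shows $E$ is rectifiable.

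\emph{The local graph lemma.} This is the technical heart, and I would run a corona/stopping-time argument. First, smallness of the curvature forces $E\cap B(x_0,R)$ to be close on average to a line $D_0$, since by the elementary estimate $c(z_1,z_2,z_3)\gtrsim\dist(z_3,\mathrm{line}(z_1,z_2))/(\diam\{z_1,z_2,z_3\})^2$ for well-spread triples a substantial deviation would push $c^2(\HH^1\lfloor(E\cap B(x_0,R)))$ above $\ve_0R$. Next, projecting onto $D_0$ and subdividing dyadically, one stops at a cube $Q$ (of side $\ell(Q)$) the first time either the density $\mu(Q)/\ell(Q)$ leaves a fixed range $[C_0^{-1},C_0]$, or $E$ fails to lie within $\delta_0\ell(Q)$ of some line making angle $\le\delta_0$ with $D_0$. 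One then shows the stopping cubes carry little of $\mu$: for the ``non-flat'' ones the corresponding Jones-type $\beta$ coefficient is $\gtrsim\delta_0$, and each can be charged a portion of $\iiint c^2$ disjoint from those charged to the others, so $\sum\beta_Q^2\ell(Q)\lesssim c^2(\HH^1\lfloor(E\cap B(x_0,R)))\le\ve_0R$, whence $\sum\ell(Q)\lesssim\delta_0^{-2}\ve_0R$ and (these $Q$ not being high-density) $\mu$ of their union is also $\lesssim\ve_0R$; the low-density stopping cubes contribute $\sum\mu(Q)\le C_0^{-1}\sum\ell(Q)\lesssim C_0^{-1}R$, small once $C_0$ is large; and the high-density ones are controlled by a packing/maximal-function estimate. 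On the good set — the complement of the stopping cubes together with the tops of the maximal ones — one shows $E$ lies on a Lipschitz graph over $D_0$, the slope being controlled because the optimal lines at consecutive non-stopping scales turn by small angles $\theta_Q$ with $\sum\theta_Q^2\ell(Q)$ again dominated by $c^2(\HH^1\lfloor(E\cap B(x_0,R)))$. Combining, the union of stopping cubes has $\mu$-mass a small fraction of $R$ while the rest of $E\cap B(x_0,R)$ sits on $\Gamma$, giving $\HH^1(E\cap\Gamma\cap B(x_0,R))\ge\eta R$.

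\emph{Main obstacle.} I expect the delicate point to be the curvature bookkeeping in the stopping step together with the slope estimate: one must tune the stopping conditions so that every deviation of $E$ from the current line that triggers stopping at a cube $Q$ is converted into a lower bound for a portion of $\iiint c^2$ that is \emph{private} to $Q$ — disjoint, across scales and locations, from the portions used by the other stopping cubes and by the slope estimate — which forces a careful organization of the generations of cubes and a sharp form of the geometric lemma relating $\beta$-numbers to localized Menger curvature. A secondary but genuine difficulty is verifying that the low-density stopping cubes have small total $\mu$-mass, since low density carries no curvature on its own and one must exploit the flatness inherited from the neighbouring scales.
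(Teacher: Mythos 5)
Your route is essentially the one the paper takes: Theorem \ref{orleg} is L\'eger's theorem, which the paper does not reprove but cites, observing only that (ii) follows from (i) together with the identity \eqref{curvl2}; your treatment of (i) is a sketch of L\'eger's stopping-time construction, i.e.\ precisely the scheme the paper itself adapts in Sections 3--7 (reduction to a local statement of the type of Proposition \ref{mainprop} via the uniformization Lemma \ref{leg11} and the renormalization of Remark \ref{invariance}, then a corona-type construction of the graph). So there is no divergence of strategy to report; the comments below concern two specific points.

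First, in your reduction of (ii) to (i), the step in which you pass to a ball $B_x$ and claim that $\mu\lfloor B_x$ has linear growth is not correct as stated: the upper density bound at the single center $x$ controls $\mu(B(x,r))$ only for balls centered at $x$, and gives no bound on $\mu(B(y,s))$ for other centers $y\in B_x$ and small $s$. The standard fix is either to invoke the known fact that the uniform $L^2(\mu)$-boundedness of the truncated Cauchy transforms already forces linear growth of $\mu$ at all centers and scales, or to uniformize over sets rather than single balls, e.g.\ $E_k=\{x\in E:\ \mathcal{H}^1(E\cap B(x,r))\le 3r \ \text{for all } r\le 1/k\}$, which exhaust $E$ up to an $\mathcal{H}^1$-null set and whose small-diameter pieces do carry restricted measures with genuine linear growth; this kind of uniformization is exactly what Lemma \ref{leg11} (L\'eger's Proposition 1.1) supplies. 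Second, your ``local graph lemma'' is L\'eger's main proposition (the analogue of Proposition \ref{mainprop}), and your sketch of it has the right architecture (stopping on density, flatness and angle; charging $\beta$-type quantities to the triple integral of $c^2$), but two cautions are in order: the Lipschitz bound for the graph comes from the angle stopping condition itself, not from square-summability of the angle increments, which alone does not control the total turning along a line of descent; and the measure estimates for the bad sets --- in particular the set where the angle condition fails, which in L\'eger requires the square-function/Fourier-analytic argument --- constitute the real work and are not supplied by the sketch. Your concluding Vitali covering and exhaustion is a legitimate alternative to the paper's shorter ending, which applies the local statement once to the purely unrectifiable part of $E$ and derives a contradiction.
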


Notice that (ii) is an immediate consequence of (i)  and (\ref{curvl2}).

Until now, as it is also evident by the (still open) David-Semmes question, very few things were known beyond the Cauchy kernel. In this paper we want to contribute in this direction by extending Theorem \ref{orleg} to a natural class of Calder\'on-Zygmund kernels in the plane. Our starting point was the fact that, somewhat suprisingly, Theorem \ref{orleg} and the main result in \cite{MMV} remain valid if the Cauchy kernel is replaced by one of its coordinate parts $x/|z|^2$ or $y/|z|^2$ for $z =(x,y) \in \C\stm \{0\}$. The kernels we are going to work with consist of a very natural generalisation of these coordinate kernels.

For $n\in \N$, we denote by $T_n$ the singular integral operator associated with the kernel 
\begin{equation}\label{defk44}
K_n(z)=\frac{x^{2n-1}}{|z|^{2n}}, \quad z =(x,y) \in \C\stm \{0\}.
\end{equation}
 Furthermore, for any three distinct $z_1,z_2,z_3\in\C$,
let
$$p_n(z_1,z_2,z_3) = K_n(z_1-z_2)\,K_n(z_1-z_3) + K_n(z_2-z_1)\,K_n(z_2-z_3) + K_n(z_3-z_1)\,K_n(z_3-z_2).$$
Analogously to the definition of the curvature of measures, for any Borel measure $\mu$ let
$$p_n(\mu)=\iiint p_n(z_1,z_2,z_3) d \mu (z_1)d \mu (z_2) d \mu (z_3).$$

Our main result reads as follows.
\begin{theorem}
\label{mainthm}
Let $E$ be a Borel set such that $0<\mathcal{H}^1(E)<\infty$, 
\begin{enumerate}
\item if $p_n(\mathcal{H}^1 \lfloor E)<\infty$, then the set $E$ is rectifiable;
\item if $T_n$ is bounded in $L^2(\mathcal{H}^1 \lfloor E)$, then the set $E$ is rectifiable.
\end{enumerate}
\end{theorem}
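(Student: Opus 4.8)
The plan is to obtain (ii) from (i) by a soft argument and to prove (i) by adapting the corona construction of David and L\'eger, with the permutations $p_n$ taking over the role played by Menger curvature in the Cauchy case. For the first step, observe that $K_n$ is odd, so $T_n$ is antisymmetric; after the standard reductions one may assume $\mu:=\mathcal{H}^1\lfloor E$ is finite and has linear growth on a subset of positive measure. Duality applied to the symmetrically truncated operators (whose adjoints are $-T_{n,\mu,\ve}$, by the oddness of $K_n$) then gives $\|T_{n,\mu,\ve}(1)\|_{L^2(\mu)}\le \|T_n\|\,\mu(\C)^{1/2}$ uniformly in $\ve$. Expanding the square and symmetrising in the three variables --- the mismatch between the three truncation regions contributing only an $O(\mu(\C))$ error, as in \cite{MeV} --- identifies $\|T_{n,\mu,\ve}(1)\|_{L^2(\mu)}^2$ with a positive multiple of $p_{n,\ve}(\mu)$ plus $O(\mu(\C))$. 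Together with the pointwise nonnegativity of $p_n$ established below and monotone convergence as $\ve\to 0$, this yields $p_n(\mathcal{H}^1\lfloor E')<\infty$ for a portion $E'\subset E$ of positive measure, and then (i) applied to $E'$, followed by an exhaustion argument, proves (ii).

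The first new ingredient is algebraic: one must establish that $p_n(z_1,z_2,z_3)\ge 0$ for all distinct $z_1,z_2,z_3\in\C$, together with a quantitative lower bound in a ``flat and almost horizontal'' regime. Writing $z_j-z_k=r_{jk}e^{i\theta_{jk}}$, one has $K_n(z_j-z_k)=r_{jk}^{-1}\cos^{2n-1}\theta_{jk}$, so $p_n$ is an explicit trigonometric sum; expanding $\cos^{2n-1}\theta$ into the odd harmonics $\cos((2\ell+1)\theta)$, $0\le \ell\le n-1$, and using for each harmonic the elementary cancellation $\sum_{\mathrm{cyc}}\big((z_1-z_2)(z_1-z_3)\big)^{-1}=0$ together with Melnikov's identity for the Cauchy kernel, one should obtain both $p_n\ge 0$ and an estimate $p_n(z_1,z_2,z_3)\ge \kappa_n\,c(z_1,z_2,z_3)^2$, valid with $\kappa_n>0$ whenever the three points lie in an $\eta r$-neighbourhood of a line making an angle at most $\eta$ with the real axis and are pairwise at distance $\gtrsim r$. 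The horizontality hypothesis cannot be dropped: if the three differences are close to vertical then $\cos\theta_{jk}\approx 0$ and $p_n$ becomes much smaller than $c(z_1,z_2,z_3)^2$, so the permutations carry almost no curvature information in that direction.

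With these facts in hand, (i) is proved along the lines of \cite{Leger}. Arguing by contradiction, suppose a portion $F\subset E$ with $0<\mathcal{H}^1(F)<\infty$, $p_n(\mathcal{H}^1\lfloor F)<\infty$, linear growth and positive lower density is purely unrectifiable, and run the David--L\'eger stopping-time/corona construction to exhibit a Lipschitz graph capturing a big piece of $F$, contradicting unrectifiability. The curvature bound above substitutes for the inequality $c^2(\mu\lfloor B)\gtrsim \beta_\mu(B)^2\,\mu(B)$ precisely at the cubes on which the local approximating line makes a small angle with the $x$-axis; there $p_n$ forces the usual flatness and packing estimates. At the remaining cubes, where the approximating line is within a small angle of the vertical and $p_n$ gives no information, one uses that such a line cannot rotate appreciably without $p_n$ immediately regaining control, so the corresponding part of $\mu$ is itself a Lipschitz graph over the vertical axis, hence rectifiable --- again a contradiction. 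Merging the two types of cubes into a single construction, while keeping all the stopping-time packing estimates uniform, is the delicate point.

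The principal obstacle is thus the loss of rotational symmetry. For the Cauchy kernel the curvature is rotation invariant and comparable to the square of the $\beta$-number in every direction, which is exactly what lets the David--L\'eger machine run without modification; for $n\ge 2$ the permutations $p_n$ degenerate along the fixed vertical direction, so the corona construction must be supplemented by a direct treatment of the near-vertical scales and the two parts must be glued together uniformly. Proving the sharp comparison $p_n\ge \kappa_n\,c^2$ in the horizontal regime, with constants independent of the configuration, is the other place where genuine new work, beyond the Cauchy case, is required.
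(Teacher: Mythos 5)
Your overall architecture --- a Melnikov--Verdera type reduction of (ii) to (i), positivity and a curvature comparison for $p_n$, then a David--L\'eger stopping-time construction split along whether the local approximating line is near-vertical --- is the right one and matches the paper's strategy. There are, however, two genuine gaps.

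The first is in the comparability lemma. You ask for $p_n\geq\kappa_n c^2$ when all three points lie in an $\eta r$-neighbourhood of a nearly-horizontal line and are pairwise $\gtrsim r$ apart. But the place where the comparison is actually used (the analogue of [L\'e, Lemma~2.5], which is Lemma~\ref{q2.5} here) runs as follows: one finds two points $z_1,z_2$ close to a candidate line, and then for an \emph{arbitrary} third point $w$ in the ball one wants to deduce $\dist(w,L_{z_1,z_2})^2\lesssim r^4\,p(z_1,z_2,w)$. The point $w$ is not a priori near $L_{z_1,z_2}$ --- showing that most of $\mu$ is close to the line is the entire content of the $\beta$-estimate, so a comparability hypothesis requiring the whole triple to be in a thin strip assumes the conclusion. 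The usable formulation, Lemma~\ref{compperm} of the paper, is: if $(z_1,z_2,z_3)\in\OO_\tau$ (comparable side lengths) and $\theta_V(L_{z_1,z_2})+\theta_V(L_{z_1,z_3})+\theta_V(L_{z_2,z_3})\geq\alpha_0$, then $p\geq c(\alpha_0,\tau)c^2$. In the $\beta$-estimate, $L_{z_1,z_2}$ is nearly horizontal, so $\theta_V(L_{z_1,z_2})\geq\alpha_0$ automatically and the hypothesis holds for every $w$ (and when the angle sum fails for a triple, one separately shows $w$ is close to $L_{z_1,z_2}$ anyway). You need that version, not the thin-horizontal-strip one.

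The second gap is the positivity argument. Expanding $\cos^{2n-1}\theta=\sum_{\ell=0}^{n-1}a_\ell\cos((2\ell+1)\theta)$ writes $K_n(z)=\sum_\ell a_\ell\,\operatorname{Re}(z^{2\ell+1}/|z|^{2\ell+2})$. The diagonal terms in the resulting expansion of $p_n$ are the permutations of $\operatorname{Re}(z^{2\ell+1}/|z|^{2\ell+2})$ for $\ell\geq 1$; these are not the Cauchy kernel and are not known to have positive permutations (Huovinen's counterexample $xy^2/|z|^4$ is a warning that closely related kernels fail), and the off-diagonal cross terms have no obvious sign. Melnikov's identity and the cyclic cancellation control only the $\ell=0$ piece, so ``one should obtain both $p_n\geq 0$'' is not justified. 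The paper instead proves Proposition~\ref{posperm} by direct algebra: clearing denominators one writes $p_n(0,z,w)=A(z,w)/(|z|^{2n}|w|^{2n}|z-w|^{2n})$ and shows $A=\sum_{k=1}^n\binom{n}{k}x^{2(n-k)}a^{2(n-k)}(x-a)^{2(n-k)}F_k(z,w)$ where each $F_k\geq 0$ follows from a one-variable calculus lemma. Something of that kind is needed.

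On the near-vertical part: your heuristic (``such a line cannot rotate appreciably without $p_n$ regaining control'') points in the right direction, but the paper does not argue that the near-vertical piece of $\mu$ is directly a Lipschitz graph over the vertical. Rather, when $\theta_V(D_0)\leq\theta_0$ it sets $\alpha=10\theta_0$, and the stopping time either never triggers (that piece lies on the Whitney graph over $\ZZ$), or triggers in $F_3$, which forces the local best line of a suitable doubling ball to be at least $\theta_0$ away from the vertical; one then applies the far-from-vertical Lemma~\ref{farfromv} to each such ball and glues the resulting graphs with line segments of uniformly bounded slope (Lemma~\ref{vertl}). Your acknowledgment that the merging is the delicate point is accurate, but the covering-and-gluing mechanism is the step you would still need to supply.
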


The natural question of fully characterizing the homogeneous Calder\'on-Zygmund operators whose boundedness in $L^2(\mathcal{H}^1\lfloor E)$ forces $E$ to be rectifiable, becomes more sensible in the light of our result. We think that such a characterization consists of a deep problem in the area as even the candidate class of ``good" kernels is far from clear. This is illustrated by a result of Huovinen in \cite{hu}, where he showed that  there exist homogeneous kernels, such as $\frac{xy^2}{|z|^4}, z=(x,y)\in \C$, whose corresponding singular integrals are $L^2$-bounded on some purely unrectifiable sets.  We should also remark that in \cite{hdis}, Huovinen proved that the a.e.\ existence of principal values of operators associated to a class of homogeneous vectorial kernels implies rectifiability. This is the case of the complex kernels $\frac{z^{2n-1}}{|z|^{2n}}$, for $n\geq1$, for instance. However, Huovinen's methods do not work for the kernels we are considering in \eqref{defk44}.

Another result in our paper extends the theorem in \cite{MMV} cited above. It reads as follows.

\begin{theorem}
\label{thm22}
Let $\mu$ be a $1$-dimensional AD-regular measure on $\C$ and, for any $n\geq 1$, consider the operator $T_n$ defined above. Then, the measure $\mu$ is uniformly rectifiable if and only if $T_n$ is bounded in $L^2(\mu)$.
\end{theorem}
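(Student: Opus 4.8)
The plan is to prove Theorem \ref{thm22} by the standard ``$Tb$-theorem plus curvature'' scheme that was developed for the Cauchy kernel in \cite{MMV} and refined by David--Semmes, now applied to the positive permutation kernel $p_n$. The implication that uniform rectifiability implies $L^2(\mu)$-boundedness of $T_n$ is the easier half: it follows from the general theory of David (\cite{D1},\cite{D2}), since $K_n$ is an odd, homogeneous kernel of degree $-1$, smooth away from the origin with $|\nabla^j K_n(z)|\lesssim |z|^{-1-j}$, so $T_n$ is one of the Calder\'on--Zygmund operators whose $L^2$-boundedness on uniformly rectifiable measures is already known. So the substance is the converse.

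For the converse, suppose $\mu$ is $1$-dimensional AD-regular and $T_n$ is bounded in $L^2(\mu)$. The first key step is an algebraic one: establish a Melnikov--Verdera-type identity relating $\|T_{n,\ve}1\|_{L^2(\mu)}^2$ to the truncated permutations $p_{n,\ve}(\mu)$, of the form $\|T_{n,\ve}1\|_{L^2(\mu)}^2 = \tfrac12 p_{n,\ve}(\mu) + O(\mu(\C))$ (after symmetrizing the double integral of $K_n(z_1-z_2)K_n(z_1-z_3)$ over the three variables). This step crucially requires knowing that $p_n(z_1,z_2,z_3)\ge 0$, or at least is bounded below by $-C$ times something controllable, for all triples; I would prove the pointwise positivity (or near-positivity) of $p_n$ by writing, as in Melnikov's computation, the permutation sum in a form that exhibits it as a sum of squares up to lower-order terms — this is where the special structure of $K_n(z)=x^{2n-1}/|z|^{2n}$ is used, and I expect this to be the main obstacle, since unlike the Cauchy case the positivity is not an immediate group-theoretic identity. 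Once this is in hand, $L^2(\mu)$-boundedness of $T_n$ gives $p_{n,\ve}(\mu\rest B)\lesssim \mu(B)$ uniformly over balls $B$, i.e.\ a local curvature-type bound.

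The second key step is to run the David--Semmes corona/stopping-time construction: using the AD-regularity of $\mu$ together with the local $p_n$-boundedness, one shows that at most scales and locations the measure is well-approximated by a line, which is exactly the ``big pieces of Lipschitz graphs'' characterization of uniform rectifiability. Concretely, on a ball $B$ one uses the smallness of $p_{n,\ve}$ on $B$ to force the existence of a line $\ell$ and a large subset where $\mu$ lies close to $\ell$; the quantitative comparison between ``$p_n$ small'' and ``flat'' is supplied by a Legendre--David type argument analogous to Theorem \ref{orleg}(i), and indeed for the purely qualitative rectifiability statement one can invoke Theorem \ref{mainthm}(i) directly on suitable pieces, but for the uniform (quantitative) statement one needs the scale-invariant version, which requires re-examining the proof of Theorem \ref{mainthm} to extract uniform estimates. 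Assembling the flat pieces via the David--Semmes machinery (a corona decomposition into good trees plus a packing estimate for the stopping cubes) yields that $\supp\mu$ is contained in an AD-regular curve, i.e.\ $\mu$ is uniformly rectifiable.

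I would organize the write-up so that the hard kernel-positivity lemma for $p_n$ is isolated and proved first (this is the genuinely new input beyond \cite{MMV} and the technique behind Theorem \ref{mainthm}), then quote \eqref{curvl2}'s analogue, then cite the by-now-standard corona argument of David--Semmes for the final geometric conclusion, indicating precisely which uniform estimates from the proof of Theorem \ref{mainthm} feed into it. The main obstacle, to repeat, is controlling the sign and size of $p_n$: one must show $p_n(z_1,z_2,z_3)\gtrsim$ (nonnegative flatness functional) $-C\cdot(\text{integrable error})$, so that smallness of $\iiint p_n$ genuinely detects flatness rather than being spuriously small because of cancellation among the three terms.
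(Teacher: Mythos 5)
Your outline matches the paper's broad strategy — invoke David's theorem for the easy direction, translate $L^2(\mu)$-boundedness of $T_n$ into a Carleson-type control on $\iiint p_n$, and then build a David--Semmes corona decomposition — but there are two points where the proposal misses the actual difficulty, and one of them is a genuine gap.

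First, a minor imprecision: you propose an exact Melnikov--Verdera identity of the form $\|T_{n,\ve}1\|_{L^2(\mu)}^2 = \tfrac12 p_{n,\ve}(\mu) + O(\mu(\C))$. No such clean identity holds for $K_n$ (the factor $1/6$ in \eqref{curvl2} and the exact identity are special to the complex Cauchy kernel). What is actually available, and all that is needed, is the one-sided MMV-type estimate of Lemma \ref{mmvcl2}: for any real antisymmetric kernel with \emph{nonnegative} permutations, $L^2(\mu)$-boundedness gives $\iiint_{B^3} p_n \,d\mu^3 \lesssim \diam(B)$. The nonnegativity of $p_n$ that you flag as ``the main obstacle'' is indeed essential, but it is disposed of once and for all in Proposition \ref{posperm}; it is not a new step inside the proof of Theorem \ref{thm22}.

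The genuine gap is that your sketch never confronts the degeneracy of $K_n$ along the vertical axis. Since $K_n(z) = x^{2n-1}/|z|^{2n}$ vanishes on $\{x=0\}$, the permutation $p_n(z_1,z_2,z_3)$ can be small for a very non-flat triple simply because all three sides are nearly vertical; the comparability $p_n \gtrsim c^2$ of Lemma \ref{compperm} requires $\theta_V(L_{z_i,z_j})$ to be bounded away from $0$ for at least one pair. Consequently ``$\iiint p_n$ small $\Rightarrow$ flat'' fails outright on near-vertical configurations, and your step ``one shows that at most scales and locations the measure is well-approximated by a line'' cannot be run directly. The paper's corona decomposition is built precisely around this: the stopping parameter $\alpha(S)$ in Lemma \ref{lem611} is chosen to be small ($\theta_0/10$) when $\theta_V(L_{Q_S})>\theta_0$ but \emph{large} ($10\theta_0$) when $L_{Q_S}$ is nearly vertical, and a fourth type of stopping tree (type $IV$) is introduced. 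For type $IV$ trees one cannot bound $\mu(Q_S)$ by a permutation sum; instead one argues that the children of the stopping cubes in $\sst_\alpha(S)$ have approximating lines \emph{far} from vertical, hence are either bad cubes or roots of trees of type $I$, $II$, or $III$, and one closes the packing estimate by absorbing the type $IV$ contribution into the already-established bounds for the other types. This reduction is the technical heart of the proof (parallel to Section~7 for Theorem \ref{mainthm}), and without some version of it your argument does not go through.
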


The fact that uniform rectifiability implies the $L^2(\mu)$-boundedness of $T_n$ is a direct consequence
of David's results in \cite{D1}. The converse implication can be understood as a quantitative version of the assertion (ii) in Theorem \ref{mainthm}. We will prove this by using a corona type decomposition. This is
a technique that goes back to the work of Carleson in the corona theorem, and which has been adapted to the geometric setting of uniform rectifiability by David and Semmes \cite{DS1}.

The paper is organised as follows. In Section 2 we prove that the permutations $p_n$ are positive and behave similarily to curvature on triangles with comparable side lengths and one side of them far from the vertical. 
Sections 3-7 are devoted to the proof of Theorem \ref{mainthm}.
In Section 3 we reduce it to Proposition \ref{mainprop}, which asserts that when $\mu$ is a measure with linear growth supported on the unit ball with mass bigger than $1$ and appropriately small curvature, then there exists a Lipschitz graph which supports a fixed percentage of $\mu$. In Section 4 we give some preliminaries for the proof of Proposition \ref{mainprop}. In Section 5 we follow David and L\'eger in defining suitable stopping time regions and an initial Lipschitz graph. In Section 6 we prove Proposition \ref{mainprop} in the case where the first good approximating line for $\mu$ is far from the vertical axis. The strategy of the proof stems from \cite{Leger} although in many and crucial points (whenever curvature is involved) we need to deviate and provide new arguments. In Section 7 we settle the case where the first approximating line is close to the vertical axis. In this case the scheme of L\'eger does not work. A fine tuning of the stopping time parameters and a suitable covering argument allows us to use the result from the previous section in order to find countable many appropriate Lipschitz graphs that can be joined. 

The proof of Theorem \ref{thm22} is outlined in Section 8. 
As remarked above, a main tool for the proof is the so called corona type decomposition. We will not give 
all the details because many of 
the arguments are similar to the ones for Theorem \ref{mainthm}, adapted to the (simpler)  AD regular case.

Throughout the paper the letter $C$ stands
for some constant which may change its value at different
occurrences. The notation $A\lesssim B$ means that
there is some fixed constant $C$ such that $A\leq CB$,
with $C$ as above. Also, $A\approx B$ is equivalent to $A\lesssim B\lesssim A$.

\section[Permutations of the kernels $K_n$]{Permutations of the kernels $K_n$: positivity and comparability with curvature}

For the rest of the paper we fix some $n \in \N$ and we denote $K:=K_n,\ T:=T_n$ and $p:=p_n$.

\begin{propo}
\label{posperm}
For any three distinct points $z_1,z_2,z_3 \in \C$,
\begin{enumerate}
\item $p(z_1,z_2,z_3) \geq 0$,
\item $p(z_1,z_2,z_3)$ vanishes if and only if $z_1,z_2,z_3$ are collinear.
\end{enumerate}
\end{propo}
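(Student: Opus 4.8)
The plan is to find an explicit algebraic identity expressing $p(z_1,z_2,z_3)$ as a sum of squares, in analogy with Melnikov's formula for Menger curvature. Write $z_j=(x_j,y_j)$ and set $a_j=x_j$ for brevity; the kernel values entering $p$ are $K(z_i-z_j)=\dfrac{(x_i-x_j)^{2n-1}}{|z_i-z_j|^{2n}}$. The first step is to clear denominators: multiplying by $|z_1-z_2|^{2n}|z_1-z_3|^{2n}|z_2-z_3|^{2n}$ turns $p(z_1,z_2,z_3)$ into a polynomial expression in the $x_i$'s and the $|z_i-z_j|^2$'s. More precisely, I expect that
$$
p(z_1,z_2,z_3)=\frac{Q_n(z_1,z_2,z_3)}{|z_1-z_2|^{2n}\,|z_1-z_3|^{2n}\,|z_2-z_3|^{2n}},
$$
where $Q_n$ is a polynomial, and the key is to show $Q_n\ge 0$ with equality exactly when the three points are collinear. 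For $n=1$ this is essentially the computation behind \eqref{curvl2} restricted to the coordinate kernel $x/|z|^2$; one checks directly that with $x_{ij}:=x_i-x_j$ and $d_{ij}:=|z_i-z_j|^2$,
$$
x_{12}x_{13}d_{23}+x_{21}x_{23}d_{13}+x_{31}x_{32}d_{12}
$$
simplifies, using $x_{12}+x_{23}+x_{31}=0$ and $d_{ij}=x_{ij}^2+y_{ij}^2$ (with $y_{ij}:=y_i-y_j$), to something manifestly nonnegative — indeed I expect it equals $(x_{12}y_{13}-x_{13}y_{12})^2=(x_{12}y_{23}-x_{23}y_{12})^2$, i.e. the square of (twice) the signed area of the triangle, which vanishes iff the points are collinear.

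For general $n$ the main obstacle is to produce the analogous sum-of-squares decomposition of $Q_n$; the degree grows with $n$ and a brute-force expansion is unwieldy. My strategy would be to exploit the factorization $x_{12}+x_{23}+x_{31}=0$ more cleverly: substitute $x_{31}=-(x_{12}+x_{23})$ throughout, so that after clearing denominators one is left with a symmetric function in two free variables $x_{12},x_{23}$ (together with the $y_{ij}$'s, which are likewise constrained by $y_{12}+y_{23}+y_{31}=0$). One then wants to show the resulting polynomial is a nonnegative combination of squares, each of which carries a factor like $(x_{ij}y_{ik}-x_{ik}y_{ij})$ so that collinearity forces it to vanish. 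A cleaner route may be to pass to the half-angle / parametrization used for Menger curvature: write each $z_i-z_j$ in polar form and observe that $K(z_i-z_j)=\operatorname{Re}\!\big(e^{i\theta_{ij}}\big)^{2n-1}/|z_i-z_j| \cdot(\text{something})$ — more precisely $K(w)=\cos^{2n-1}(\arg w)/|w|$ — and then expand $\cos^{2n-1}$ in a Fourier series $\cos^{2n-1}\theta=\sum_{k} c_k\cos((2k-1)\theta)$ with $c_k>0$. This reduces $p_n$ to a positive linear combination of the quantities $\sum_{\text{cyc}}\frac{\cos((2k-1)\theta_{12})\cos((2k-1)\theta_{13})}{|z_1-z_2||z_1-z_3|}$, and each such term should be handled by the same Melnikov-type identity applied to the ``higher-frequency'' kernel $\cos((2k-1)\theta)/|w|$, which is itself a coordinate-type kernel of a power map.

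Once positivity is established, statement (ii) follows by tracking the equality case: if the points are collinear, then $K(z_i-z_j)$ is, up to sign, $1/|z_i-z_j|$ with all the ``directions'' equal, and a direct substitution shows each square in the decomposition vanishes, hence $p=0$; conversely, $p=0$ forces every square factor $(x_{ij}y_{ik}-x_{ik}y_{ij})$ (twice the signed triangle area) to vanish, which is exactly collinearity. I would present the $n=1$ computation in full as the base/model case and then carry out the Fourier-expansion reduction, citing the positivity of the coefficients $c_k$ in $\cos^{2n-1}\theta=\sum_k c_k\cos((2k-1)\theta)$ and reducing to finitely many Melnikov-type identities; I expect the bookkeeping in this reduction — rather than any conceptual difficulty — to be the real work.
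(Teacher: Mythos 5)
Your first step---translation to $z_1=0$, clearing denominators, and reducing to the nonnegativity of the numerator $Q_n$---is exactly how the paper opens, and your $n=1$ computation giving $(xb-ay)^2$ is correct (it is the term the paper calls $F_1$). But beyond that the proposal does not close the argument, and of your two plans for general $n$, one is left as an aspiration and the other fails.

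The Fourier route has a structural problem: the permutation $p[K](z_1,z_2,z_3)=\sum_i K(z_i-z_j)K(z_i-z_k)$ is \emph{quadratic} in $K$, so writing $K_n=\sum_j c_j H_j$ with $H_j(w)=\cos\bigl((2j-1)\arg w\bigr)/|w|$ and $c_j>0$ gives $p[K_n]=\sum_{j,\ell}c_jc_\ell\,p[H_j,H_\ell]$ with off-diagonal ``mixed permutations'' that are not controlled by the diagonal ones. Worse, even the diagonal terms can be negative: for $H_2(w)=\cos(3\arg w)/|w|$ take $z_1=0$, $z_2=1$, $z_3=e^{i\pi/3}$ (an equilateral triangle of side $1$). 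One computes $H_2(z_1-z_2)=-1$, $H_2(z_1-z_3)=1$, $H_2(z_2-z_1)=1$, $H_2(z_2-z_3)=-1$, $H_2(z_3-z_1)=-1$, $H_2(z_3-z_2)=1$, so $p[H_2](z_1,z_2,z_3)=-3<0$. Thus there is no Melnikov-type positivity to invoke for the higher-frequency pieces, and the conjugation that makes Melnikov's identity work for $1/z$ is absent in the real permutation $p$. This route cannot be salvaged as written.

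The sum-of-squares route is closer to the paper's, but the hard step---exhibiting $Q_n$ as a manifestly nonnegative quantity for all $n$---is exactly what you leave open. The paper's trick is different from an SOS decomposition and is worth knowing: after normalising $z_1=0$, $z_2=z=(x,y)$, $z_3=w=(a,b)$, one expands the three factors $|\cdot|^{2n}$ by the binomial theorem and regroups to get
$$A(z,w)=\sum_{k=1}^{n}\binom{n}{k}\,x^{2(n-k)}a^{2(n-k)}(x-a)^{2(n-k)}\,F_k(z,w),$$
where each coefficient in front of $F_k$ is an even power and hence nonnegative, and (in the generic case $a,b\neq 0$) $F_k(z,w)=a^{2(2k-1)}b^{2k}f^k_{x/a}(y/b)$ with
$$f^k_t(s)=t^{2k-1}(s-1)^{2k}+(t-1)^{2k-1}t^{2k-1}-(t-1)^{2k-1}s^{2k}.$$
The nonnegativity of $f^k_t$ is then a one-variable calculus fact: its $s$-derivative vanishes only at $s=t$, it tends to $+\infty$ as $s\to\pm\infty$, and $f^k_t(t)=0$. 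This both proves $A\geq 0$ and pins down the equality case ($s=t$ for every $k$, i.e.\ $xb=ay$, i.e.\ collinearity). If you want to keep your SOS framing, you would need to extract an explicit factorisation $f^k_t(s)=(s-t)^2 g_{k,t}(s)$ with $g_{k,t}\geq 0$ for every $k$, which is possible in principle but is precisely the bookkeeping you flag as ``the real work''; the paper's derivative argument sidesteps it cleanly.
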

\begin{proof}Since $p(z_1,z_2,z_3)$ is invariant by translations, it is enough to estimate the permutations $p(0,z,w)$ for any two distinct points $z=(x,y),w=(a,b) \in \C \stm \{0\}$. Then,
\begin{equation*}
\begin{split}
p(0,z,w)&=K(z)K(w)+K(z)K(z-w)+K(w)K(w-z)\\
&=\frac{x^{2n-1}a^{2n-1}}{|z|^{2n}|w|^{2n}}+\frac{x^{2n-1}(x-a)^{2n-1}}{|z|^{2n}|z-w|^{2n}}-\frac{a^{2n-1}(x-a)^{2n-1}}{|w|^{2n}|z-w|^{2n}}\\
&=\frac{x^{2n-1}a^{2n-1}|z-w|^{2n}+x^{2n-1}(x-a)^{2n-1}|w|^{2n}-a^{2n-1}(x-a)^{2n-1}|z|^{2n}}{|z|^{2n}|w|^{2n}|z-w|^{2n}}.
\end{split}
\end{equation*}
We denote,
\begin{equation*}
A(z,w)=x^{2n-1}a^{2n-1}|z-w|^{2n}+x^{2n-1}(x-a)^{2n-1}|w|^{2n}-a^{2n-1}(x-a)^{2n-1}|z|^{2n},
\end{equation*}
so that
\begin{equation}
\label{pera}
p(0,z,w)=\frac{A(z,w)}{|z|^{2n}|w|^{2n}|z-w|^{2n}},
\end{equation}
and it suffices to prove that $A(z,w)\geq 0$ for all distinct points $z,w \in \C\stm\{0\}$ and $A(z,w)=0$ if and only if $0,z$ and $w$ are collinear.
Furthermore,
\begin{equation*}
\begin{split}
A(z,w)&=x^{2n-1}a^{2n-1}((x-a)^2+(y-b)^2)^n+x^{2n-1}(x-a)^{2n-1}(a^2+b^2)^{n}\\
&\quad \quad-a^{2n-1}(x-a)^{2n-1}(x^2+y^2)^{n} \\
&=x^{2n-1}a^{2n-1}\left(\sum_{k=0}^{n} \binom{n}{k}(x-a)^{2(n-k)}(y-b)^{2k}\right)\\
&\quad \quad+x^{2n-1}(x-a)^{2n-1}\left(\sum_{k=0}^{n} \binom{n}{k}a^{2(n-k)}b^{2k}\right)\\
&\quad \quad-a^{2n-1}(x-a)^{2n-1}\left(\sum_{k=0}^{n} \binom{n}{k}x^{2(n-k)}y^{2k}\right).
\end{split}
\end{equation*}

After regrouping the terms of the last sum we obtain,
\begin{equation*}
\begin{split}
A(z,w)&=\sum_{k=0}^{n} \binom{n}{k}(x^{2n-1}a^{2n-1}(x-a)^{2(n-k)}(y-b)^{2k}
+x^{2n-1}(x-a)^{2n-1}a^{2(n-k)}b^{2k}\\
&\quad \quad \quad \quad -a^{2n-1}(x-a)^{2n-1}x^{2(n-k)}y^{2k} )\\
&=x^{2n-1}a^{2n-1}(x-a)^{2n}+x^{2n-1}(x-a)^{2n-1}a^{2n}
-a^{2n-1}(x-a)^{2n-1}x^{2n}\\
& +\sum_{k=1}^{n} \binom{n}{k}\Big(x^{2n-1}a^{2n-1}(x-a)^{2(n-k)}(y-b)^{2k}
+x^{2n-1}(x-a)^{2n-1}a^{2(n-k)}b^{2k}\\
&\quad \quad \quad \quad -a^{2n-1}(x-a)^{2n-1}x^{2(n-k)}y^{2k} \Big).
\end{split}
\end{equation*}
Since
\begin{equation*}
\begin{split}
x^{2n-1}a^{2n-1}(x-a)^{2n}+x^{2n-1}(x-a)^{2n-1}a^{2n}-a^{2n-1}(x-a)^{2n-1}x^{2n}\\
=x^{2n-1}a^{2n-1}(x-a)^{2n-1}(x-a+a-x)=0,
\end{split}
\end{equation*}
we get,
\begin{equation*}
\begin{split}
A(z,w)=\sum_{k=1}^{n} &\binom{n}{k}x^{2(n-k)}a^{2(n-k)}(x-a)^{2(n-k)} \\ \times &\Big(x^{2k-1}a^{2k-1}(y-b)^{2k}+x^{2k-1}(x-a)^{2k-1}b^{2k}-a^{2k-1}(x-a)^{2k-1}y^{2k} \Big).
\end{split}
\end{equation*}
For $k \in \N$ and $z\neq w \in \C\stm\{0\},z=(x,y),w=(a,b),$ let
\begin{equation}
\label{fk}
F_k(z,w)=x^{2k-1}a^{2k-1}(y-b)^{2k}+x^{2k-1}(x-a)^{2k-1}b^{2k}-a^{2k-1}(x-a)^{2k-1}y^{2k},
\end{equation}
then
\begin{equation}
\label{afk}
A(z,w)=\sum_{k=1}^{n} \binom{n}{k}x^{2(n-k)}a^{2(n-k)}(x-a)^{2(n-k)}F_k(z,w).
\end{equation}
Thus it suffices to prove that for all $k \in \N$,
$$F_k(z,w) \geq 0 \ \text{whenever} \ z \neq w \in \C \stm{\{0\}}$$
and
$$A(z,w)=0 \ \text{if and only if the points} \ 0,z \ \text{and} \ w \ \text{are collinear}.$$

To this end we consider three cases.

\textit{Case 1}, $a=0$.

In this case,
$$A(z,w)=F_n(z,w)=x^{2(2n-1)}b^{2n} \geq 0$$
and since $b \neq 0$,
$$A(z,w)=0 \ \text{if and only if} \ x=0.$$
That is in this case $A(z,w)$ vanishes only when $z$ and $w$ lie on the imaginary axis.

\textit{Case 2,} $b=0$.

In this case,
\begin{equation*}
\begin{split}
F_k(z,w)&=x^{2k-1}a^{2k-1}y^{2k}-a^{2k-1}y^{2k}(x-a)^{2k-1}\\
&=a^{2k-1}y^{2k}(x^{2k-1}-(x-a)^{2k-1}).
\end{split}
\end{equation*} 
Since $a \neq 0$ and the function $x^{2k-1}$ is increasing, 
$x^{2k-1} > (x-a)^{2k-1}$ whenever $a>0$, and $x^{2k-1}<(x-a)^{2k-1}$ whenever $a<0$. Therefore $F_k(z,w) \geq 0$ and $F_k(z,w)=0$ if and only if $y=0$, that is whenever $z$ and $w$ lie in the real axis.

\textit{Case 3,} $a \neq 0$ and $b \neq 0$.\\
In this case by (\ref{fk}), after factoring,
\begin{equation}
\label{fk1}
\begin{split}
F_k(z,w)=a^{2(2k-1)}b^{2k} \Big(\left(\frac{x}{a}\right)^{2k-1} \left(\frac{y}{b}-1\right)^{2k}&+\left(\frac{x}{a}\right)^{2k-1} \left(\frac{x}{a}-1\right)^{2k-1}\\
&- \left(\frac{x}{a}-1\right)^{2k-1}\left(\frac{y}{b}\right)^{2k}\Big).
\end{split}
\end{equation}
We will make use of the following elementary lemma.
\begin{lm}
\label{aux}
Consider the family of polynomials for $k \in \N$,
$$f^k_t(s)=t^{2k-1}(s-1)^{2k}+(t-1)^{2k-1}t^{2k-1}-(t-1)^{2k-1}s^{2k}$$
where $t \in \R$ is a parameter. Then,
$$f^k_t(s) \geq 0, \ \text{for all} \ s\in \R \ \text{and} \ t\in \R$$
and
$$f^k_t(s)=0\ \text{if and only if} \ t=s.$$

\end{lm}
\begin{proof} Since $f_t^{k}$ is an even degree polynomial with respect to $s$,
$$\lim_{s\rightarrow \pm \infty} f^k_t(s)=\lim_{s \rightarrow \pm \infty} \big(t^{2k-1}-(t-1)^{2k-1}\big )s^{2k}=+\infty$$
because $s^{2k}$ is the highest degree monomial of $f_t^{k}$ and the function $t^{2k-1}$ is increasing. 

Furthermore
$$(f^k_t)'(s)=2k t^{2k-1} (s-1)^{2k-1}-2k (t-1)^{2k-1}s^{2k-1}$$
and $$(f^k_t)'(s)=0$$ is satisfied if and only if
$$t^{2k-1} (s-1)^{2k-1}=(t-1)^{2k-1}s^{2k-1},$$
that is if and only if $s=t$.
Therefore $f^k_t(s) \geq f^k_t(t)$ for all $s \in \R$ and
$$f^k_t(t)=t^{2k-1}(t-1)^{2k}+t^{2k-1}(t-1)^{2k-1}-(t-1)^{2k-1}t^{2k}=0.$$
Hence,
$$f_t^k(s) \geq 0 \ \text {for all} \ t,s \in \R$$
and
$$f_t^k(s)=0 \ \text{if and only if} \ t=s.$$
\end{proof}
Hence after applying Lemma \ref{aux} for $t=\frac{x}{a},\ s=\frac{y}{b}$ to (\ref{fk1}), Lemma \ref{posperm} follows.
\end{proof}

Given two distinct points $z,w\in\C$, we will denote by $L_{z,w}$ the line passing through $z,w$.
Given three pairwise different points $z_1,z_2,z_3$, we denote by $\meas(z_1,z_2,z_3)$ the smallest angle formed
by the lines $L_{z_1,z_2}$ and $L_{z_1,z_3}$.
If $L,L'$ are lines, then $\meas(L,L')$ is the smallest angle between $L$ and $L'$. Also, $\theta_V(L):=\meas(L,V)$ where $V$ is a vertical line. Furthermore for a fixed constant $\tau \geq 1$, set 
\begin{equation}
\label{oot}
\OO_\tau = \bigl\{(z_1,z_2,z_3)\in\C^3:\,|z_i-z_j|\leq \tau\,|z_i-z_k|\,\mbox{ for distinct $1\leq i,j,k\leq 3$}\bigr \},
\end{equation}
so that all triangles whose vertices form a triple in $\OO_\tau$ have comparable sides. Finally we should also remark that its not hard to see that for three pairwise different points $z_1,z_2,z_3$,
$$c(z_1,z_2,z_3)=\frac{4 \ \text{area}( T_{z_1,z_2,z_3})}{|z_1-z_2||z_1-z_3||z_2-z_3|}$$
where $T_{z_1,z_2,z_3}$ denotes the triangle determined by $z_1,z_2,z_3$.

\begin{lemma}
\label{compperm}
For $(z_1,z_2,z_3)\in \OO_\tau$, if $\theta_V(L_{z_1,z_2})+\theta_V(L_{z_1,z_3})+ \theta_V(L_{z_2,z_3})\geq \alpha_0>0$, then
$$p(z_1,z_2,z_3)\geq c(\alpha_0,\tau)\,c(z_1,z_2,z_3)^2.$$
\end{lemma}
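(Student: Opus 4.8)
The plan is to reduce the comparison to the normalized setting used in Proposition~\ref{posperm}: since both $p$ and $c^2$ are translation invariant, translate so that one vertex is at the origin, and then use the homogeneity of the kernel. Writing $z_1=0$, $z_2=z=(x,y)$, $z_3=w=(a,b)$, we have from \eqref{pera} that $p(0,z,w)=A(z,w)/(|z|^{2n}|w|^{2n}|z-w|^{2n})$, while $c(0,z,w)=4\,\mathrm{area}(T_{0,z,w})/(|z||w||z-w|)$. Hence
\begin{equation*}
\frac{p(0,z,w)}{c(0,z,w)^2}=\frac{A(z,w)}{16\,\mathrm{area}(T_{0,z,w})^2\,(|z|^{2n-2}|w|^{2n-2}|z-w|^{2n-2})}.
\end{equation*}
Because $(0,z,w)\in\OO_\tau$, all three side lengths are comparable, say each is comparable to $\ell:=|z|$, so the denominator factor $|z|^{2n-2}|w|^{2n-2}|z-w|^{2n-2}$ is comparable (with constants depending only on $\tau$) to $\ell^{6n-6}$, and $\mathrm{area}(T_{0,z,w})\le C\ell^2$. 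Thus it suffices to bound $A(z,w)$ from below by $c(\alpha_0,\tau)\,\ell^{6n-6}\,\mathrm{area}(T_{0,z,w})^2/\ell^4 = c(\alpha_0,\tau)\,\ell^{6n-4}\cdot(\text{something}\lesssim 1)$; more precisely one wants $A(z,w)\gtrsim \ell^{6n-4}\,\sin^2\theta$ where $\theta=\meas(0,z,w)$, since $\mathrm{area}(T_{0,z,w})\approx \ell^2\sin\theta$.

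The main work, therefore, is a quantitative lower bound for $A(z,w)$. Here I would exploit the decomposition \eqref{afk}, $A(z,w)=\sum_{k=1}^n\binom{n}{k}x^{2(n-k)}a^{2(n-k)}(x-a)^{2(n-k)}F_k(z,w)$, together with the case analysis in the proof of Proposition~\ref{posperm}. The hypothesis $\theta_V(L_{z_1,z_2})+\theta_V(L_{z_1,z_3})+\theta_V(L_{z_2,z_3})\ge\alpha_0$ guarantees that at least one of the three lines makes an angle $\ge\alpha_0/3$ with the vertical, i.e. one of the three ratios among $x/a$ (the slope-type quantities appearing in $F_k$, or rather their reciprocals) is bounded away from the degenerate configuration. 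Concretely, in Case 3 one has $F_k(z,w)=a^{2(2k-1)}b^{2k}f^k_t(s)$ with $t=x/a$, $s=y/b$, and Lemma~\ref{aux} gives $f^k_t(s)\ge 0$ with equality iff $s=t$; quantitatively, since $f^k_t$ is a polynomial vanishing to second order exactly at $s=t$, one gets $f^k_t(s)\gtrsim (s-t)^2$ on the relevant compact range of parameters (which is compact precisely because $(0,z,w)\in\OO_\tau$ forces $|t|,|s|$ and $|t-1|,|s-1|$ to be bounded above and the ``non-vertical'' hypothesis keeps us away from the bad part of the boundary). The quantity $s-t=y/b-x/a$ is, up to the scaling factors $a,b$, exactly proportional to the cross product $x b - y a$, which is $2\,\mathrm{area}(T_{0,z,w})$; so $F_k(z,w)\gtrsim (xb-ya)^2\cdot(\text{lower-order factors})$, and tracking the powers of $\ell$ one recovers $A(z,w)\gtrsim_{\alpha_0,\tau}\ell^{6n-4}\sin^2\theta\approx \ell^{6n-6}\,\mathrm{area}(T_{0,z,w})^2/\ell^2$, which is what is needed. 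The degenerate Cases~1 ($a=0$) and~2 ($b=0$) must be handled separately but are easier: in Case~1, $A(z,w)=x^{2(2n-1)}b^{2n}$ and the non-verticality forces $|x|\approx\ell$, and $\mathrm{area}\approx |x||b|$, giving the bound directly; Case~2 is symmetric.

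The step I expect to be the main obstacle is making the lower bound $f^k_t(s)\gtrsim (s-t)^2$ \emph{uniform} with the correct dependence on $\alpha_0$ and $\tau$ and with the right homogeneity — i.e. checking that the compact parameter domain on which one argues by compactness/continuity genuinely excludes all zeros of $f^k_t$ other than benign ones, and that the constants do not secretly depend on the scale $\ell$. The cleanest route is probably: (a) reduce by scaling to $\ell\approx 1$; (b) observe the parameter $(t,s)$ ranges over a compact set $Q_\tau\subset\R^2$ determined by $\tau$; (c) note that on $Q_\tau$ the zero set of $(z,w)\mapsto A(z,w)$ is exactly the collinear configurations (Proposition~\ref{posperm}), which in turn is where $\sin\theta=0$; (d) the angle hypothesis $\sum\theta_V\ge\alpha_0$ carves out a compact subset $Q_{\tau,\alpha_0}$ on which one still has the full range of $\sin\theta\in[0,1]$ but where the only remaining subtlety is the coincidences of the boundary cases $a=0$ or $b=0$ — there a direct computation as above replaces the compactness argument; (e) conclude $A(z,w)/\sin^2\theta\ge c(\alpha_0,\tau)>0$ on $Q_{\tau,\alpha_0}$ by continuity and the fact that numerator and denominator vanish to the same (second) order along the collinearity locus. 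Undoing the scaling then yields the claimed inequality $p(z_1,z_2,z_3)\ge c(\alpha_0,\tau)\,c(z_1,z_2,z_3)^2$.
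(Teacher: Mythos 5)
Your reduction to a lower bound for $A(z,w)$ and the idea of exploiting the decomposition \eqref{afk} are reasonable (modulo a homogeneity slip: since $A$ is homogeneous of degree $6n-2$, the bound you need is $A\gtrsim_{\alpha_0,\tau}\ell^{6n-2}\sin^2\theta$, not $\ell^{6n-4}\sin^2\theta$; this is harmless once you rescale to $\ell\approx1$). The genuine gap is in your central quantitative step. The claim that $(0,z,w)\in\OO_\tau$ together with the angle hypothesis forces $t=x/a$, $s=y/b$ into a compact set $Q_\tau$ is false: comparability of the three side lengths does not prevent $|a|$, $|b|$ or $|x-a|$ from being arbitrarily small compared with the common scale $\ell$. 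For instance $z=(1,0)$, $w=(\sigma,1)$ with $\sigma\to0$ has all sides comparable and $L_{0,z}$ horizontal (so the angle hypothesis holds), yet $t=1/\sigma\to\infty$; the hypothesis only gives, after relabeling, $|x|\gtrsim_{\alpha_0}\ell$, i.e.\ control of one coordinate, not of the ratios $t,s$. Treating the exact boundary cases $a=0$, $b=0$ separately does not repair this, because the problematic set is an open region (small but nonzero $|a|$, $|b|$ or $|x-a|$), on which those boundary computations give no uniform bound. Moreover the asserted estimate $f^k_t(s)\gtrsim(s-t)^2$ cannot hold uniformly in $t$: $f^k_0(s)=s^{2k}$ and $f^k_1(s)=(s-1)^{2k}$, so for $k\geq2$ the vanishing at $s=t$ is of order $2k$, not $2$; and $t\to1$ is exactly the regime $|x-a|\ll|x|$, where in addition the prefactor $(x-a)^{2(n-k)}$ multiplying the good term $k=1$ degenerates. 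In the nondegenerate regime the compactness argument is unnecessary anyway: keeping only the $k=1$ term gives $A\geq n\,x^{2n-2}a^{2n-2}(x-a)^{2n-2}F_1$ with $F_1=(xb-ay)^2=|z|^2|w|^2\sin^2\meas(z,0,w)$, which is precisely what the paper uses.

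What is missing is an argument for the degenerate regimes, and this is where the paper's proof does its real work. After the normalization $|z|\leq c_1(\alpha_0)|x|$, it fixes a large threshold $M=M(\alpha_0,\tau)$ and, when $|x-a|<M^{-1}|x|$ or $|x|>M|a|$, it abandons the curvature-type bound altogether: using the explicit formula \eqref{defp2}, one of the products $\frac{x^{2n-1}a^{2n-1}}{|z|^{2n}|w|^{2n}}$ or $\frac{x^{2n-1}(x-a)^{2n-1}}{|z|^{2n}|z-w|^{2n}}$ is bounded below by $c(\alpha_0,\tau)|z|^{-2}$ while the remaining terms are $O(\tau^{2n}M^{-(2n-1)}|z|^{-2})$, hence $p(0,z,w)\gtrsim|z|^{-2}\gtrsim c(0,z,w)^2$, since $c(0,z,w)^2\lesssim|z|^{-2}$ for triples in $\OO_\tau$; the other threshold cases contradict $|x|\gtrsim_{\alpha_0}|z|$ and are vacuous. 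Some dominant-term analysis of this kind (or an equivalent substitute) must be added to your proposal; as written, the proof does not go through on those open degenerate regions.
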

\begin{proof} It suffices to prove the lemma for $(0,z,w) \in \OO_\tau$ for $z=(x,y),w=(a,b),z \neq w \in \C \stm \{0\}$. From the lemma's assumption we infer that at least one of the angles $\theta_V(L_{z,w}),\theta_V(L_{z,0}),\theta_V(L_{w,0})$ is greater or equal than $\alpha_0/3$. Therefore without loss of generality we can assume that there exists a constant $c_1(\alpha_0)$ such that $|z|\leq c_1(\alpha_0)|x|$. Furthermore let $M:=M(\alpha_0,\tau)>10$ be some large positive number that will be determined later. We distinguish three cases.

\textit{Case 1}: $M^{-1} |x| \leq |x-a|\leq M |x|$ and $M^{-1} |a| \leq |x|\leq M |a|$.\\
By (\ref{pera}), (\ref{afk}) and the fact that the functions $F_k$, as in the proof of Lemma \ref{posperm}, are non-negative we deduce,
\begin{equation*}
\begin{split}
p(0,z,w)&\geq n \frac{x^{2n-2}a^{2n-2}(x-a)^{2n-2}}{|z|^{2n}|w|^{2n}|z-w|^{2n}} F_1(z,w)\\
&=n \frac{x^{2n-2}a^{2n-2}(x-a)^{2n-2}}{|z|^{2n}|w|^{2n}|z-w|^{2n}} (xb-ay)^2\\
&=n \frac{x^{2n-2}a^{2n-2}(x-a)^{2n-2}}{|z|^{2n}|w|^{2n}|z-w|^{2n}} |z|^2|w|^2 \sin ^2(z,w)\\
&=n \left(\frac{|x|}{|z|}\right)^{2n-2}\left(\frac{|a|}{|w|}\right)^{2n-2}\left(\frac{|x-a|}{|z-w|}\right)^{2n-2} \frac{\sin ^2(z,w)}{|z-w|^2}.
\end{split}
\end{equation*}
Recalling that
$$|x|\geq c_1(\alpha_0)^{-1}|z|$$
we notice that in this case,
$$|a|\geq M^{-1}|x| \geq M^{-1} c_1(\alpha_0)^{-1}|z| \geq (Mc_1(\alpha_0)\tau)^{-1}|w|, $$
and in the same manner
$$|x-a|\geq (Mc_1(a_0)\tau)^{-1}|z-w|.$$
Therefore since $c(0,z,w)=\dfrac{2\sin\meas(z,0,w)}{|z-w|}$ we obtain that,
$$p(0,z,w) \geq c(\alpha_0,\tau) c^2(0,z,w).$$

\textit{Case 2}:  $|x-a|<M^{-1}|x|$.
In this case,
$$|x-a|< M^{-1}\tau |z-w|$$
and 
$$|a|\geq \frac{|x|}{2} \geq 2^{-1}c_1(\alpha)^{-1}|z| \geq (2c_1(\alpha_0) \tau)^{-1}|w|.$$

By the definition of $p(0,z,w)$,
\begin{equation}
\label{defp2}
p(0,z,w)=\frac{x^{2n-1}a^{2n-1}}{|z|^{2n}|w|^{2n}}+\frac{x^{2n-1}(x-a)^{2n-1}}{|z|^{2n}|z-w|^{2n}}-\frac{a^{2n-1}(x-a)^{2n-1}}{|w|^{2n}|z-w|^{2n}}.
\end{equation}
Notice that,
\begin{equation*}
\begin{split}
\frac{|x|^{2n-1}|x-a|^{2n-1}}{|z|^{2n}|z-w|^{2n}} &\leq \frac{1}{|z||z-w|} \left( \frac{|x-a|}{|z-w|}\right)^{2n-1} \\
&\leq \frac{\tau}{|z|^2} \left( \frac{|x-a|}{|z-w|}\right)^{2n-1} \leq \tau^{2n} M^{-(2n-1)}|z|^{-2},
\end{split}
\end{equation*}
and in the same way,
$$\frac{|a|^{2n-1}|x-a|^{2n-1}}{|w|^{2n}|z-w|^{2n}} \leq \tau^{2n} M^{-(2n-1)}|z|^{-2}$$
On the other hand,
\begin{equation*}
\begin{split}
\frac{|x|^{2n-1}|a|^{2n-1}}{|z|^{2n}|w|^{2n}}&=\left(\frac{|x|}{|z|}\right)^{2n-1}\left( \frac{|a|}{|w|}\right)^{2n-1} \frac{1}{|z|}\frac{1}{|w|}\\
&\geq (c_1(\alpha_0)^{-2}\tau^{-1}2^{-1})^{2n-1}\tau^{-1}|z|^{-2}
\end{split}
\end{equation*}
Therefore for $M$ large enough and depending only on $\alpha_0$ and $\tau$,
\begin{equation*}
\begin{split}
p(0,z,w)&\geq \big((c_1(\alpha_0)^{-2}\tau^{-1}2^{-1})^{2n-1}\tau^{-1}-2\tau^{2n} M^{-(2n-1)}\big)|z|^{-2} \\
&\geq c(\alpha_0,\tau) c^2(0,z,w).
\end{split}
\end{equation*}

\textit{Case 3}: $|x-a|>M|x|$.

In this case 
$$|x|<M^{-1}|x-a| \leq M^{-1} \tau |z|,$$
and since $M\gg c_1(\alpha_0)+\tau$ we obtain that $|x|<c_1(\alpha_0)^{-1}|z|$ which contradicts the initial assumption, so this case is impossible. 

\textit{Case 4}: $|x|<M^{-1}|a|$.

As with case 3, this case is impossible because it contradicts the initial assumption as
$$|x|<M^{-1} \tau |z|.$$

\textit{Case 5}: $|x|>M |a|$.

In this case we have,
$$|a|<M^{-1}|x|<\frac{|x|}{2}$$
and
$$|x-a|\geq \frac{|x|}{2}\geq (2c_1(\alpha_0))^{-1}|z| \geq (2c_1(\alpha_0) \tau)^{-1}|z-w|.$$
Therefore we can argue as in case 2, recalling (\ref{defp2})
and noticing that now the dominating term is the second one. As in case 2 we deduce that,
\begin{equation*}
\begin{split}
p(0,z,w)&\geq \big((c_1(\alpha_0)^{-2}\tau^{-1}2^{-1})^{2n-1}\tau^{-1}-2\tau^{2n} M^{-(2n-1)}\big)|z|^{-2} \\
&\geq c(\alpha_0,\tau) c^2(0,z,w).
\end{split}
\end{equation*}
\end{proof}

For a positive measure $\mu$ (without atoms, say), denote
$$p(\mu) =\iiint p(z_1,z_2,z_3)\,d\mu(z_1)\,d\mu(z_2)\,d\mu(z_3)$$
and, recalling (\ref{oot}),
$$p_\tau(\mu) =\iiint_{\OO_\tau}
 p(z_1,z_2,z_3)\,d\mu(z_1)\,d\mu(z_2)\,d\mu(z_3).$$
 
We will also use the following notation. Given $z_1\in \C$, we set
$$p_\mu(z_1) = p[z_1,\mu,\mu] = \iint p(z_1,z_2,z_3)\,d\mu(z_2)\,d\mu(z_3),$$
and for $\nu$ another positive measure,
$$p(\nu,\mu,\mu)=\iiint p(z_1,z_2,z_3)\,d\nu (z_1)\,d\mu(z_2)\,d\mu(z_3).$$
For $z_1,z_2\in \C$,
$$p_\mu(z_1,z_2) = p[z_1,z_2,\mu] = \int p(z_1,z_2,z_3)\,d\mu(z_3).$$
We define $p_{\tau,\mu}(z_1)$ and $p_{\tau,\mu}(z_1,z_2)$ analogously.

\section{Reductions}

Our purpose in this section is to reduce the proof of Theorem \ref{mainthm} to the proof of the following proposition which will occupy the biggest part of the paper.

\begin{propo}  
\label{mainprop}
For any constant $C_0\geq 10$, there exists a number $\eta>0$ such that
if $\mu$ is any positive Radon measure on $\C$ satisfying
\begin{itemize}
\item $\mu (B(0,1))\geq 1$,\ $\mu(\C \stm B(0,2))=0$,
\item for any ball $B$, $\mu(B) \leq C_0 \diam (B)$,
\item $p(\mu) \leq \eta$
\end{itemize}
then there exists a Lipschitz graph $\Gamma$ such
that $\mu(\Gamma) \geq 10^{-5}\mu(\C)$. 
\end{propo}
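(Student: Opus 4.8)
The plan is to follow the stopping-time scheme of David and L\'eger \cite{Leger} for the Cauchy kernel, the essential new difficulty being that by Lemma \ref{compperm} the permutations $p$ control Menger curvature only on triangles that are simultaneously non-degenerate (a triple in $\OO_\tau$) \emph{and} not all of whose sides are close to the vertical direction. Fix $C_0\geq 10$; all constants below may depend on $C_0$ and on the fixed $n$, and $\eta$ will be taken small at the end. The first step is to produce an \emph{initial approximating line}: using the lower mass bound $\mu(B(0,1))\geq 1$, the linear growth, and the smallness of $p(\mu)$, one shows there is a line $L_0$ and a ball $B_0$ of radius comparable to $1$ carrying all but a tiny fraction of $\mu$, such that a suitable $L^2$ flatness ($\beta$-type) coefficient of $\mu$ on $B_0$ is small, i.e.\ $\mu$ is concentrated in a thin strip around $L_0$. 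Already here one must be careful: $p(\mu)$ only records the curvature of triples in $\OO_\tau$ that are far from vertical, so this flatness has to be extracted by splitting the triple integral according to whether the triangle lies in $\OO_\tau$ and is far from vertical, and treating the complementary triples by hand (using linear growth).

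We then split into two cases according to the size of $\theta_V(L_0)$. \textbf{Case A (the initial line is far from the vertical): $\theta_V(L_0)\geq\alpha_0$ for a fixed $\alpha_0>0$.} Here we run the David--L\'eger machinery. Starting from $B_0$ we subdivide dyadically and stop a ball $B$ when one of the usual conditions holds: the density of $\mu$ on $B$ is too high, or too low, or the best-approximating line $L_B$ of $\mu$ on $B$ makes too large an angle with $L_0$ (big slope), or $\mu$ is too far from $L_B$ on $B$ (big $\beta$). On the non-stopped region one builds a Lipschitz graph $\Gamma$ over $L_0$ in the standard way and shows that every point of $\supp\mu$ that is never stopped lies on $\Gamma$. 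The crux is the packing estimate: the total mass of the maximal stopping balls is at most, say, $\frac12\mu(\C)$. For the density balls this is classical; for the big-slope and big-$\beta$ balls one must bound $\sum_B\mu(B)\lesssim p(\mu)\leq\eta$, and this is where we replace $p$ by the curvature $c^2$ via Lemma \ref{compperm}. This substitution is legitimate \emph{precisely because} the stopping conditions are tuned so that, prior to stopping, every local line $L_B$ stays within a small angle of $L_0$ --- hence far from vertical --- and the triples entering the curvature integral lie in $\OO_\tau$. Consequently, wherever \cite{Leger} invokes curvature the argument must be re-run carrying along these two geometric constraints; this is the bulk of the new work and occupies Sections 5--6.

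\textbf{Case B (the initial line is close to the vertical): $\theta_V(L_0)$ small.} Now Lemma \ref{compperm} is vacuous and the L\'eger scheme cannot be started from $L_0$. Instead, by a fine tuning of the stopping-time parameters one shows that a fixed proportion of $\mu$ can be covered by a countable family of balls $\{B_i\}$ in each of which a suitable local approximating line is quantitatively far from the vertical. Rescaling each $B_i$ to unit size (the hypotheses of the proposition, with the same $C_0$, are preserved, using that the stopping rules keep the density bounded below and that $p$ is monotone under restriction) and applying Case A produces a Lipschitz graph $\Gamma_i$ carrying a fixed fraction of the part of $\mu$ in $B_i$. Since the $B_i$ and the $\Gamma_i$ are all almost aligned with the near-vertical $L_0$, after rotating $L_0$ to coincide with the vertical axis the $\Gamma_i$ become graphs with controlled slopes over essentially disjoint intervals, and can be joined into a single Lipschitz graph $\Gamma$ with $\mu(\Gamma)\geq 10^{-5}\mu(\C)$. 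This covering-and-joining argument is Section 7; choosing $\eta$ small throughout closes the proof.

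The main obstacle is the interplay of the two defects of $p$: triangle degeneracy, dealt with by restricting to $\OO_\tau$, and proximity to the vertical direction. In Case A this forces the stopping-time parameters to be chosen so that local best-approximating lines never rotate away from $L_0$, and it forces every curvature estimate of \cite{Leger} to be redone with this extra bookkeeping (these are "the many and crucial points where we need to deviate"). In Case B it forces the entire covering scheme, whose delicate points are to guarantee that a non-vertical local line appears at a controlled scale on a set of controlled $\mu$-measure, and that the resulting countably many Lipschitz graphs can be combined without losing more than a fixed fraction of the total mass.
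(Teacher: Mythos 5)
Your proposal is correct and follows essentially the same route as the paper: an initial flat ball via Lemma \ref{q2.5}, a split according to whether $\theta_V(D_0)>\theta_0$ or not, a modified David--L\'eger stopping-time construction in the far-from-vertical case (Lemma \ref{farfromv}, with every curvature estimate re-run through Lemma \ref{compperm} using that the stopping rules keep local lines within angle $\alpha$ of $D_0$), and in the near-vertical case a covering of $F_3$ by doubling balls whose approximating lines are forced by the choice $\alpha=10\theta_0$ to be quantitatively non-vertical, to which the first case is applied after rescaling (Remark \ref{invariance2}) and the resulting graphs joined (Lemmas \ref{closevert} and \ref{vertl}). The only step you leave vague --- ``fine tuning of the stopping-time parameters'' in Case B --- is exactly the paper's observation, via \cite[Remark 3.3]{Leger}, that stopping for large angle with $\alpha=10\theta_0$ yields $4\theta_0\leq\theta_V(L_{x,h(x)})\leq 11\theta_0$, which is what makes the local lines usable.
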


\begin{rem}
\label{invariance}
The previous proposition is equivalent to the following stronger statement. 

For any constant $C_0\geq 10$, there exists a number $\eta>0$ such that
if $\mu$ is any positive Radon measure on $\C$ such that for some bounded Borel $F \subset \C$,
\begin{itemize}
\item $\mu (F)\geq \diam (F)$,
\item for any ball $B$, $\mu(B \cap F) \leq C_0 \diam (B)$
\item $p(\mu \lfloor F) \leq \eta \ \diam(F)$
\end{itemize}
then there exists a Lipschitz graph $\Gamma$ such
that $\mu(\Gamma \cap F) \geq 10^{-5}\mu(F)$. 

Indeed, suppose that Proposition \ref{mainprop} holds. Let $x_0 \in F$ and define the renormalized measure
$$\nu:= \frac{1}{\diam(F)} T_\sharp(\mu \lfloor F),$$ 
where $T(x):=\frac{x-x_0}{\diam(F)}$ and as usual $ T_\sharp(\mu \lfloor F)$ is the image measure of $\mu \lfloor F$ under $T$, defined by $T_\sharp(\mu \lfloor F)(X)=\mu \lfloor F  (T^{-1}(X)), X \subset \C$. Then $\nu(B(0,1)) \geq1, \ \nu(\C \stm B(0,2))=0$ and for any ball $B$, $\nu(B) \leq C_0 \diam (B)$. It also follows easily that for all distinct $x,y,z \in \C$, $p(T(x),T(y),T(z))=\diam(F)^2 p(x,y,z)$, therefore
$$p(\nu)=\frac{\diam(F)^2}{\diam(F)^3} p(\mu \lfloor F) \leq \eta.$$
Hence we can apply Proposition \ref{mainprop} for the measure $\nu $ and obtain a Lipschitz graph $\Gamma$ such that $\nu(\Gamma)\geq 10^{-5} \nu(\C)$, which is equivalent to $\mu(T^{-1}(\Gamma) \cap F) \geq 10^{-5}\mu(F)$ and $T^{-1}(\Gamma)$ is the desired Lipschitz graph.

\end{rem}
We continue with the following lemma which relates $L^2$-boundedness and permutations.

\begin{lemma} 
\label{mmvcl2}
Let $\mu$ be a continuous positive Radon measure in $\C$ with linear growth. If the operator $T$ is bounded in $L^2(\mu)$ then there exists a constant $C$ such that for any ball $B$,
$$\iiint_{B^3}p(x,y,z) d \mu (x)d \mu (y)d \mu (z) \leq C \diam (B).$$
\end{lemma}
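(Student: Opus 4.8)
The plan is to expand the permutation $p(x,y,z)$ into products of the kernels $K$ and reduce the triple integral to a sum of integrals of the form $\iiint_{B^3} K(x-y)\,K(x-z)\,d\mu\,d\mu\,d\mu$, which should be controllable by the $L^2(\mu)$-boundedness of $T$ together with the linear growth of $\mu$. Concretely, by symmetry of $p$ in its three variables,
\[
\iiint_{B^3} p(x,y,z)\,d\mu(x)\,d\mu(y)\,d\mu(z) = 3\iiint_{B^3} K(x-y)\,K(x-z)\,d\mu(x)\,d\mu(y)\,d\mu(z),
\]
so it suffices to bound $\int_B \big(\int_B K(x-y)\,d\mu(y)\big)\big(\int_B K(x-z)\,d\mu(z)\big)\,d\mu(x) = \int_B \big(T(\chi_B)(x)\big)^2\,d\mu(x)$. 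The issue is that $T(\chi_B)$ need not make literal sense pointwise, so first I would work with the truncated operators $T_\ve$: the truncated permutation $p_\ve$ (integral over $\{|z_i-z_j|\geq\ve\}$) satisfies the analogous identity with $T_\ve(\chi_B)$ in place of $T(\chi_B)$, up to harmless error terms coming from the regions where two of the three pairwise distances differ in whether they exceed $\ve$; these error terms are estimated by the linear growth of $\mu$ exactly as in the Melnikov–Verdera identity \eqref{curvl2}. Then $\int_B |T_\ve(\chi_B)|^2\,d\mu \leq C\,\mu(B) \leq C\,\diam(B)$ by $L^2(\mu)$-boundedness of $T$ (which gives uniform-in-$\ve$ bounds on $T_\ve$, since $\mu$ has linear growth so the truncated kernels are uniformly bounded CZ operators), and letting $\ve\to 0$ via monotone convergence — here one uses positivity of $p$, Proposition \ref{posperm} — yields the claim.

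In more detail, the key steps in order: (1) record the pointwise algebraic identity $p_\ve(x,y,z) = \sum_{\text{cyc}} K(x-y)K(x-z)\chi_{\{|x-y|\geq\ve,\,|x-z|\geq\ve\}} + (\text{symmetrization correction})$, and observe that upon integrating over $B^3$ the main term is $3\int_B T_\ve(\chi_B)^2\,d\mu$; (2) bound the correction terms — these involve configurations where, say, $|x-y|\geq\ve > |x-z|$ with $|y-z|\geq\ve$ — by $C\diam(B)$ using $|K(w)|\lesssim |w|^{-1}$, $\mu(B(x,r))\lesssim r$, and a standard annular/dyadic summation, just as in \cite{MeV}; (3) verify that $T$ bounded in $L^2(\mu)$ plus linear growth implies $\sup_\ve \|T_\ve\|_{L^2(\mu)\to L^2(\mu)} < \infty$ (a well-known fact: pointwise domination of $T_\ve$ by $T_\delta$ plus a maximal-function term controlled by linear growth, or via the Cotlar-type inequality); (4) conclude $3\int_B T_\ve(\chi_B)^2\,d\mu \leq 3\|T_\ve\|^2\,\mu(B) \lesssim \diam(B)$; (5) use $p_\ve \uparrow p$ pointwise as $\ve\downarrow 0$ with $p\geq 0$ and apply monotone convergence to pass the bound to $\iiint_{B^3} p$.

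The main obstacle I expect is step (2): making sure the symmetrization correction really is controlled by $\diam(B)$ and not something larger. Unlike the Cauchy case where the Melnikov identity is exact and the correction is a clean $O(\mu(\C))$, here $p$ is only a combination of kernel products without an underlying curvature-squared identity, so one has to check by hand that each "mismatched-truncation" term — where exactly one of the three distances is below $\ve$ — integrates to at most $C\diam(B)$. This is where linear growth is essential and where the estimate $|K_n(w)| \leq |w|^{-1}$ (immediate from $|x|\leq|z|$) gets used repeatedly; the computation is routine annulus-by-annulus but must be done carefully. A secondary, more minor point is confirming the uniform $L^2(\mu)$ bound on truncations in step (3) in the present non-doubling but linear-growth setting; this is standard (see e.g.\ the arguments around \eqref{curvl2} and in \cite{tsa}), so I would simply cite it.
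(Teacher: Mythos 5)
Your proposal is correct and follows essentially the same route as the paper, which simply invokes the symmetrization argument of \cite[Lemma 2.1]{MMV}: one expands $\int_B |T_\ve(\chi_B)|^2\,d\mu$ into the permuted kernel products, controls the mismatched-truncation terms by the linear growth of $\mu$ exactly as in \cite{MeV}, and uses the positivity of $p$ to pass to the limit $\ve\to0$. Note only that, with the paper's definition of $L^2(\mu)$-boundedness (uniform bounds on the truncations $T_\ve$), your step (3) is automatic and needs no Cotlar-type argument.
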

For the proof see \cite[Lemma 2.1]{MMV}, where it is stated and proved for the Cauchy transform. The proof goes unchanged if $1/z$ is replaced by any real antisymmetric kernel $k$ with positive permutations satisfying the growth condition $|k(x)| \leq C|x|^{-1}, \ x\in \C\stm\{0\}$. 


For the proof of (i) of Theorem \ref{mainthm} we will need one more lemma.
\begin{lemma}
\label{leg11}
Let $E \subset \C$ be a Borel set with $0<\mathcal{H}^1 (E)<\infty$ and $p(\mathcal{H}^1 \lfloor E)<\infty$. Then for all $\eta>0$ there exists an $F \subset E$ such that,
\begin{enumerate}
\item F is compact,
\item $p(\mathcal{H}^1\lfloor F) \leq \eta \diam F$,
\item $\mathcal{H}^1 (F) >\frac{ \diam F}{40}$,
\item for all $x \in \C, \ t>0$, $\mathcal{H}^1(F \cap B(x,t)) \leq 3t$.
\end{enumerate} 
\end{lemma}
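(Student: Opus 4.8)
The plan is to extract $F$ from $E$ by combining a standard density-type selection argument with the continuity of the measures $p(\mathcal{H}^1\rest\cdot)$ under restriction. First I would recall the classical fact (going back to the arguments in \cite{Leger}) that for an $\mathcal{H}^1$-measurable set $E$ with $0<\mathcal{H}^1(E)<\infty$, at $\mathcal{H}^1$-a.e.\ point $x\in E$ the upper density satisfies $\limsup_{t\to 0}\mathcal{H}^1(E\cap B(x,t))/(2t)\le 1$, so after discarding a null set we may assume $\mathcal{H}^1(E\cap B(x,t))\le 3t$ for all $x$ in (a subset of) $E$ and all $t$ below some scale depending on $x$. An exhaustion by the sets where this holds for $t\le 1/j$ lets us pass to a subset $E'\subset E$ with $\mathcal{H}^1(E')>0$ on which property (iv) holds at a uniform scale; rescaling (using the scaling $p(T(x),T(y),T(z))=\diam(F)^{-2}\cdots$ homogeneity already recorded in Remark \ref{invariance}, or simply working at the fixed scale) we may assume (iv) holds for \emph{all} $t>0$, at the cost of tracking a normalization constant.

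Next I would produce the smallness of the permutation integral. Since $p(\mathcal{H}^1\rest E)<\infty$ and $p\ge 0$ by Proposition \ref{posperm}, the set function $A\mapsto p(\mathcal{H}^1\rest A)=\iiint_{A^3}p\,d\mathcal{H}^1 d\mathcal{H}^1 d\mathcal{H}^1$ is (absolutely) continuous with respect to $\mathcal{H}^1$ in the sense that it tends to $0$ as $\mathcal{H}^1(A)\to 0$; this is just dominated convergence applied to $\mathbf{1}_{A^3}$. Hence for any prescribed threshold we can find a subset of $E'$ of small but positive measure on which $p(\mathcal{H}^1\rest\cdot)$ is as small as we wish. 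The remaining task is to arrange simultaneously that this subset is compact, has diameter comparable to its measure (property (iii), i.e.\ $\mathcal{H}^1(F)>\diam F/40$), and keeps property (ii) in the scale-invariant form $p(\mathcal{H}^1\rest F)\le\eta\diam F$.

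To get (iii) together with the rest, I would use a stopping-time / subdivision argument: start from $E'$, cover it by finitely many dyadic-type squares, and look at the first generation of squares $Q$ for which $\mathcal{H}^1(E'\cap Q)$ is a fixed fraction (say $\ge \mathcal{H}^1(E'\cap Q_{\mathrm{parent}})/C$) of the mass while $\diam Q$ is comparable to that mass — such a square must exist because the total mass is positive and finite while at small scales the density bound (iv) forces $\mathcal{H}^1(E'\cap Q)\lesssim\diam Q$, so the ratio $\mathcal{H}^1(E'\cap Q)/\diam Q$ cannot stay large forever; a pigeonhole on scales gives a square $Q_0$ with $\mathcal{H}^1(E'\cap Q_0)\gtrsim\diam Q_0$. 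Intersecting with that square preserves (iv), and by the absolute continuity above, further restricting $E'\cap Q_0$ (or simply choosing the generation deep enough that the permutation mass contributed at that scale is small, using $p(\mathcal{H}^1\rest E)<\infty$ again) gives $p(\mathcal{H}^1\rest F)\le\eta\diam F$. Finally compactness (i) is achieved by inner regularity of $\mathcal{H}^1\rest E$: replace $F$ by a compact subset carrying all but an arbitrarily small fraction of its mass, which changes $\diam F$ and $\mathcal{H}^1(F)$ by a negligible amount and only decreases $p(\mathcal{H}^1\rest F)$, so (ii)--(iv) survive with slightly adjusted constants ($1/40$ and $3$ are chosen with room to spare).

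The main obstacle I expect is the \emph{simultaneous} control: shrinking $F$ to make the permutation integral small tends to destroy the diameter-to-mass comparability (iii), while enlarging $F$ to recover (iii) can bring back too much curvature. The right way around this is to do the selection in the correct order — first fix the scale via the density/pigeonhole step to pin down $\diam F\approx\mathcal{H}^1(F)$, and only \emph{then} exploit that $p(\mathcal{H}^1\rest E)<\infty$ at that fixed scale, noting that the tail $\iiint_{F^3\setminus(\text{small})}p$ is automatically small once $F$ sits inside a single small square because all three points are then forced close together and $p(\mathcal{H}^1\rest E)<\infty$ controls the local contribution. Keeping the numerology consistent (the factor $3$ in (iv) must come out of the $2$ in the density bound, and $1/40$ must survive the passage to a compact subset) is the routine-but-delicate bookkeeping part.
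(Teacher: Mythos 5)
Your skeleton --- pick an $\mathcal{H}^1$-density point $x_0$ of $E$, take $F$ to be a compact core of $E\cap\overline{B}(x_0,t)$ at a suitably small scale $t$, use inner regularity for (i) --- is the right one and matches the standard argument the paper cites (L\'eger, Prop.\ 1.1). You also correctly identify the genuine tension between (ii) and (iii): shrinking $F$ to make $p(\mathcal{H}^1\rest F)$ small also shrinks $\diam F$. (A minor remark: no pigeonhole over dyadic squares is needed for (iii); at a.e.\ point the lower $\mathcal{H}^1$-density of $E$ is $\ge 1/2$, so $\mathcal{H}^1(F_t)\gtrsim t\approx\diam F_t$ for small $t$ directly.)

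Where the proposal has a real gap is in how it resolves that tension. Absolute continuity, \emph{i.e.} ``$p(\mathcal{H}^1\rest A)\to 0$ as $\mathcal{H}^1(A)\to 0$'', is true but says nothing about the \emph{ratio} $p(\mathcal{H}^1\rest F)/\diam F$, which is the dimensionless quantity that (ii) requires to be $\le\eta$ ($p$ is $(-2)$-homogeneous, so $p(\mathcal{H}^1\rest F)$ scales like a length, the same as $\diam F$). Your explanation --- ``the tail is automatically small once $F$ sits inside a single small square because all three points are then forced close together'' --- points the wrong way: at scale $t$ the integrand $p(z_1,z_2,z_3)$ is of size $t^{-2}$, so near-diagonal triples are the dominant contribution, not a negligible tail, and $p(\mathcal{H}^1\rest F_t)/\diam F_t$ is generically of order $1$ as $t\to 0$. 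The finiteness $p(\mathcal{H}^1\rest E)<\infty$ must be used through a differentiation argument. Concretely, with $\mu=\mathcal{H}^1\rest E$, set $p_\mu(y)=\iint p(y,z,w)\,d\mu(z)\,d\mu(w)$ (finite $\mu$-a.e.) and $g_\rho(y)=\iint_{B(y,\rho)^2}p(y,z,w)\,d\mu(z)\,d\mu(w)$, so that $g_\rho\le p_\mu$ and $g_\rho\downarrow 0$ $\mu$-a.e.\ as $\rho\to0$. Since $y,z,w\in B(x_0,t)$ forces $z,w\in B(y,2t)$, one has, for $2t<\rho$,
\[
p\bigl(\mu\rest B(x_0,t)\bigr)\;\le\;\int_{B(x_0,t)}g_{2t}\,d\mu\;\le\;\epsilon\,\mu(B(x_0,t))\;+\;\int_{B(x_0,t)}p_\mu\,\mathbf 1_{\{g_\rho\ge\epsilon\}}\,d\mu ,
\]
and choosing $x_0$ to be a density point of $\mu$ that lies outside $\{g_\rho\ge\epsilon\}$ and is a $\mu$-Lebesgue point of $p_\mu\mathbf 1_{\{g_\rho\ge\epsilon\}}$ for a countable dense set of $(\rho,\epsilon)$ gives $t^{-1}p(\mu\rest B(x_0,t))\to 0$ for $\mu$-a.e.\ $x_0$. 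That estimate, not absolute continuity, is the engine that makes (ii) compatible with (iii); without it ``do the selection in the right order'' does not close the argument.
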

The (fairly easy) proof makes use of standard uniformization arguments and can be found in \cite[Proposition 1.1]{Leger}. Assuming Proposition \ref{mainprop} we can now prove the generalised version of David-L\'eger Theorem.

\begin{proof}[Proof of Theorem \ref{mainthm}]First of all notice that (i) with Lemma \ref{mmvcl2} implies (ii). For the proof of (i) recall that since $\mathcal{H}^1(E)<\infty$, $E$ has a decomposition into a rectifiable and purely unrectifiable part, $E=E_{rect} \cup E_{unrect}$. By way of contradiction assume that $\mathcal{H}^1(E_{unrect})>0$. Now, by Lemma \ref{leg11}, for all $\eta>0$, there exists a compact set $F \subset E_{unrect}$ satisfying
\begin{itemize}
\item $p(\mathcal{H}^1 \lfloor F) \leq \eta \diam F$,
\item $\mathcal{H}^1 (F) >\frac{ \diam F}{40}$,
\item for all $x \in \C, \ t>0$, $\mathcal{H}^1(F \cap B(x,t)) \leq 3t$.
\end{itemize}
Therefore by Remark \ref{invariance}, applied to $F$ and $\mu=40 \mathcal{H}^1 \lfloor F$, there exists a Lipschitz graph $\Gamma$ such that $ \mathcal{H}^1 (\Gamma \cap F) \geq 10^{-5} \mathcal{H}^1 (F)$, which is impossible because $F$ is purely unrectifiable.
\end{proof}

\section{Preliminaries for the proof of proposition \ref{mainprop}}
Let $\mu$ be a positive Radon measure in $\C$. We will say that $\mu$ has $C_0$-linear growth if for all $x \in \C,\, r>0$,
$$\mu(B(x,r))\leq C_0 r.$$
\begin{definition}
For a ball $B=B(x,t)$ we set 
$$\delta_\mu(B)=\delta_\mu (x,t)=\frac{\mu (B(x,t))}{t}.$$ 
\end{definition}

\begin{definition} \label{defbeta}
Given some fixed constant $k>1$, for any ball $B=B(x,t)\subset\C$ and $D$ a line in $\C$, we set
\begin{equation*}
\begin{split}
\beta_{1,\mu}^D(B)&=\frac{1}{t} \int_{B(x,kt)} \frac{\dist(y,D)}t d \mu (y), \\
\beta_{2,\mu}^D (B)&=\left(\frac1t\,\int_{B(x,kt)}\left(\frac{\dist(y,D)}t\right)^2\,
d\mu(y)\right)^{1/2},\\
\beta_{1,\mu}(B)&=\inf_D \beta_{1,\mu}^D (B),\\
\beta_{2,\mu}(B)& =\inf_D \beta_{2,\mu}^D (B)
\end{split}
\end{equation*}
\end{definition}
We will also introduce a small density threshold $\delta >0$ and examine what happens in balls such that $\delta_\mu(B) >\delta$. The following lemma will be used several times.

\begin{lm} [\cite{Leger}, Lemma 2.3.]
\label{balls}
Let $\mu$ be a measure with $C_0$-linear growth. There exist constants $C_1 \geq 1, C_1' \geq 1$ depending only on $C_0$ and $\delta$ such that for any ball $B$ with $\delta_\mu(B) \geq \delta$, there exist three balls $B_1,B_2$ and $B_3$ of radius $\frac{r(B)}{C_1}$ with centers in $B$ such that 
\begin{enumerate}
\item their centers are at least $\frac{12 r(B)}{C_1}$ apart,
\item and $\mu (B_i) \geq \frac{r(B_i)}{C_1'}$ for $i=1,2,3$.
\end{enumerate}
\end{lm}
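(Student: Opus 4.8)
The plan is to use the $C_0$-linear growth to produce, inside a ball $B = B(x,r)$ with $\delta_\mu(B) \geq \delta$, a portion of $\mu$ that is spread out rather than concentrated, and then to extract from it three well-separated balls each carrying a fixed fraction of $\mu$. First I would normalize: by scaling and translating it suffices to treat $B = B(0,1)$ with $\mu(B(0,1)) \geq \delta$ and $\mu(B(x,r)) \leq C_0 r$ for all $x,r$. The key quantitative input is that a measure with total mass $\geq \delta$ on the unit ball and upper linear growth $C_0$ cannot be supported near a single point: indeed, for any $z$, $\mu(B(z,\rho)) \leq C_0\rho$, so the mass lying within distance $\rho$ of any fixed point is at most $C_0\rho$, which is less than $\delta/2$ once $\rho < \delta/(2C_0)$. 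Hence at least $\delta/2$ of the mass sits outside $B(z, \delta/(2C_0))$.

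The extraction of the three balls would proceed by a greedy/iterative argument. Set $\rho_0 := \delta/(20 C_0)$ (a radius to be fixed so that the final separation and the "$\frac{r(B)}{C_1}$" bookkeeping work out). Pick any point $z_1 \in \supp\mu \cap B(0,1)$ with $\mu(B(z_1,\rho_0)) \geq \mu(B(0,1))/N$ for a suitable integer $N = N(C_0,\delta)$ — such a point exists by a standard covering argument, since $B(0,1)$ is covered by $N \approx \rho_0^{-2}$ balls of radius $\rho_0$ and one of them must carry at least a $1/N$ fraction of the mass; call its center $z_1$ (moving to a point of $\supp\mu$ inside it, doubling the radius if necessary). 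Now remove the ball $B(z_1, 40\rho_0)$: by linear growth its $\mu$-mass is at most $40 C_0 \rho_0 = 2\delta < \delta \leq \mu(B(0,1))$ — here I would choose $\rho_0$ a bit smaller, say $\rho_0 = \delta/(200 C_0)$, so that $40 C_0 \rho_0 \leq \delta/5$, leaving at least $\tfrac{4}{5}\mu(B(0,1))$ of the mass in $B(0,1) \setminus B(z_1,40\rho_0)$. Repeat the pigeonhole argument on this remaining mass to find $z_2$, then remove $B(z_2, 40\rho_0)$ and repeat once more to find $z_3$. By construction the centers $z_1, z_2, z_3$ are pairwise at distance $\geq 40\rho_0$, each ball $B(z_i, \rho_0)$ (or $B(z_i, 2\rho_0)$ after the adjustment to land on $\supp\mu$) satisfies $\mu(B(z_i, 2\rho_0)) \geq \mu(B(0,1))/N \geq \delta/N$, and all three balls are contained in $B(0,1)$. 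Setting $C_1 := \max\{1/(2\rho_0),\ 40/12\} = \max\{100 C_0/\delta,\ 10/3\}$ and $C_1' := 2N/\delta \cdot (\text{const})$ — chosen so that radius $2\rho_0 = r(B)/C_1$, separation $40\rho_0 \geq 12 r(B)/C_1$, and $\mu(B_i) \geq \delta/N \geq r(B_i)/C_1'$ all hold — gives exactly the statement after undoing the normalization.

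The main obstacle is purely bookkeeping: juggling the various constants ($\rho_0$, $N$, $C_1$, $C_1'$) so that the three requirements — radius exactly $r(B)/C_1$, centers at least $12 r(B)/C_1$ apart, and $\mu(B_i) \geq r(B_i)/C_1'$ — are simultaneously met, while also guaranteeing that each pigeonhole step leaves enough mass for the next (which is where the upper growth bound $\mu(B(z_i,40\rho_0)) \leq 40 C_0\rho_0$ must be strictly smaller than the mass still available). There is no real analytic difficulty; everything follows from linear growth plus a finite covering count, and indeed this is why the lemma is quoted from \cite{Leger} rather than reproved here.
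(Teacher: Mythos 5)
Your argument is correct: linear growth caps the mass removable around any chosen point, so the greedy covering/pigeonhole scheme with $\rho_0=\delta/(200C_0)$ does yield three points of $\supp\mu\cap B$ pairwise $40\rho_0$-separated whose $2\rho_0$-balls each carry mass $\gtrsim\delta/N$, and the constants $C_1=100C_0/\delta$, $C_1'\approx N/C_0$ meet all three requirements (the $40/12$ term in your $\max$ is superfluous, since $40\rho_0\geq 24\rho_0=12\,r(B)/C_1$ already). The paper itself gives no proof — it quotes the lemma from L\'eger's Lemma 2.3 — and your argument is essentially the standard one used there, so there is nothing to flag beyond that routine bookkeeping.
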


The following lemma should be considered as a qualitative version of \cite[Lemma 2.5]{Leger}.

\begin{lemma}
\label{q2.5}
Let $\mu$ be a measure with $C_0$-linear growth, and $B\subset\C$ a ball with $\delta_\mu(B)\geq\delta$.
Suppose that $\tau$ is big enough. Then, for any $\ve>0$, there exists some $\delta_1=\delta_1(\delta,\ve)>0$ such that
if 
$$\frac{p_\tau(\mu\rest kB)}{\mu(B)}\leq \delta_1,$$
then $\beta_2(B)\leq \ve.$
\end{lemma}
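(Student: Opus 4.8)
The statement is a qualitative compactness reformulation of David--L\'eger's Lemma 2.5 in \cite{Leger}, where there the smallness of the curvature $c^2_\tau(\mu\rest kB)/\mu(B)$ forces $\beta_2(B)$ to be small. The natural strategy is to argue by contradiction and compactness, transferring the problem to the already-understood curvature case via Lemma \ref{compperm}. So suppose the conclusion fails: there exist $\delta,\ve_0>0$ and a sequence of measures $\mu_j$, each with $C_0$-linear growth, and balls $B_j$ with $\delta_{\mu_j}(B_j)\geq\delta$, such that $p_\tau(\mu_j\rest kB_j)/\mu_j(B_j)\to 0$ but $\beta_2(B_j)>\ve_0$ for all $j$. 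After the usual normalization (translating and dilating so that $B_j=B(0,1)$, which is legitimate since all quantities involved scale correctly — $p$ scales like curvature-squared times the cube of the dilation, $\mu(B)$ like the dilation, $\beta_2$ is scale invariant), the $C_0$-linear growth bound gives $\mu_j(kB_j)\leq C_0 k$, so by weak-$*$ compactness of Radon measures we may pass to a subsequence with $\mu_j\rest \overline{kB}\to\mu_\infty$ weakly-$*$, and $\mu_\infty$ also has $C_0$-linear growth with $\mu_\infty(B(0,1))\geq\delta$ (lower semicontinuity on open sets, upper on compact sets; a small shrinking of radii handles the boundary).

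\textbf{Passing to the limit.} The first key step is that $p_\tau(\mu_\infty)=0$. Since $p(z_1,z_2,z_3)$ is continuous and nonnegative off the diagonals, and on $\OO_\tau$ the triples with two coincident points form a set where $p$ is bounded (the kernel $K$ is bounded by $|z|^{-1}$ and on $\OO_\tau$ all sides are comparable, so $p$ is bounded on $\OO_\tau\cap\{|z_i-z_j|\geq\rho\}$ for each $\rho$), a standard argument — restricting the triple integral to $\{|z_i-z_j|\geq\rho\text{ for all }i\neq j\}\cap\OO_\tau$, where the integrand is bounded and continuous, then letting $\rho\to0$ — yields $p_\tau(\mu_\infty)\leq\liminf_j p_\tau(\mu_j\rest kB_j)=0$. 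By Proposition \ref{posperm} together with Lemma \ref{compperm}: if three points of $\supp\mu_\infty$ inside $\OO_\tau$ were not collinear, then either $p(z_1,z_2,z_3)>0$ directly (Proposition \ref{posperm}), or — to get a quantitative nonvanishing on a neighborhood — one observes that the angle-sum $\theta_V(L_{z_1,z_2})+\theta_V(L_{z_1,z_3})+\theta_V(L_{z_2,z_3})$ is bounded below on any compact set of noncollinear $\OO_\tau$-triples away from one specific bad configuration, and handles configurations close to the vertical by noting that for a genuinely two-dimensional piece of $\supp\mu_\infty$ one can always select $\OO_\tau$-triples avoiding the vertical (here one uses $\tau$ large, as hypothesized, together with Lemma \ref{balls} to produce three well-separated balls of positive mass, from which one extracts such a triple). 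The conclusion is that $\supp\mu_\infty$ is contained in a single line $D_\infty$.

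\textbf{Deriving the contradiction.} Once $\supp\mu_\infty\subset D_\infty$, we have $\dist(y,D_\infty)=0$ for $\mu_\infty$-a.e.\ $y$, hence $\beta_{2,\mu_\infty}^{D_\infty}(B(0,1))=0$. The final step is to show $\limsup_j\beta_{2,\mu_j}(B_j)=0$, contradicting $\beta_2(B_j)>\ve_0$. This follows from weak-$*$ convergence: $\beta_{2,\mu_j}(B_j)\leq\beta_{2,\mu_j}^{D_\infty}(B_j)$, and $\int_{B(0,k)}\dist(y,D_\infty)^2\,d\mu_j(y)\to\int_{B(0,k)}\dist(y,D_\infty)^2\,d\mu_\infty(y)=0$ because $y\mapsto\dist(y,D_\infty)^2$ is continuous and bounded on $\overline{B(0,k)}$ — again with a harmless adjustment at the boundary sphere, e.g.\ by replacing $k$ with a slightly larger $k'$ in the limiting argument or using that $\mu_\infty(\partial B(0,k))$ can be assumed zero after perturbing $k$.

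\textbf{Main obstacle.} The delicate point is the step asserting $\supp\mu_\infty$ lies on a line. Proposition \ref{posperm} only gives strict positivity of $p$ at a single noncollinear triple, not a lower bound on a neighborhood, so one cannot immediately conclude from $p_\tau(\mu_\infty)=0$ that \emph{every} $\OO_\tau$-triple in the support is collinear without an openness/continuity argument; and Lemma \ref{compperm} requires the angle-sum to be bounded away from zero, which fails precisely for triples lying near a vertical line. Reconciling these is where the hypothesis ``$\tau$ big enough'' does its work: if $\supp\mu_\infty$ were not contained in a line, its density lower bound and linear growth (via Lemma \ref{balls}) would force the existence of \emph{many} well-spread triples, among which some are both in $\OO_\tau$ and have angle-sum bounded below, and then Lemma \ref{compperm} gives $p>0$ there, contradicting $p_\tau(\mu_\infty)=0$. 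Making this selection argument precise — quantifying ``not contained in a line'' into a robust supply of good triples — is the one genuinely nontrivial piece; everything else is routine weak-$*$ convergence bookkeeping.
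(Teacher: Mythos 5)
Your compactness-and-contradiction approach is genuinely different from the paper's proof, which is direct and quantitative. But the proposal has a real gap, and — to your credit — you identify it yourself: the assertion that $p_\tau(\mu_\infty)=0$ forces $\supp\mu_\infty$ to lie in a line. You call making this precise ``the one genuinely nontrivial piece'' and leave it as a sketch. That step is not a technicality; it is the whole content of the lemma, and it faces exactly the same difficulty your quantitative predecessor would. Specifically: Proposition \ref{posperm} gives pointwise strict positivity at a single noncollinear triple, which does not pass to the integral over $\OO_\tau$ because the triple witnessing noncollinearity may be badly unbalanced (outside $\OO_\tau$); and Lemma \ref{compperm} requires the angle-sum to be bounded away from zero, which can fail when the relevant segments are nearly vertical. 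Your sketch invokes Lemma \ref{balls} to manufacture balanced triples and then waves at ``avoiding the vertical,'' but that selection is precisely where the argument has to earn its keep, and it is not carried out.

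The paper handles this head-on with a clean dichotomy that you would need to reproduce inside the limit measure anyway, so compactness buys nothing. After producing three well-separated balls $B_1,B_2,B_3$ via Lemma \ref{balls} and choosing $z_1,z_2$ by Chebyshev with small $p_{\tau,\mu\rest kB}(z_1,z_2)$, the paper looks at a generic $w\in kB\setminus(2B_1\cup 2B_2)$, for which $(z_1,z_2,w)\in\OO_\tau$ automatically, and splits: either $\theta_V(L_{z_1,z_2})+\theta_V(L_{z_1,w})+\theta_V(L_{z_2,w})\leq\alpha_0$, in which case elementary geometry gives $\dist(w,L_{z_1,z_2})\lesssim\alpha_0 r$ with no appeal to $p$ at all; or the angle-sum exceeds $\alpha_0$, in which case Lemma \ref{compperm} kicks in and $p(z_1,z_2,w)\gtrsim_{\alpha_0}\dist(w,L_{z_1,z_2})^2/r^4$. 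Integrating in $w$ yields $\int_{kB\setminus(2B_1\cup2B_2)}\dist(w,L_{z_1,z_2})^2\,d\mu\lesssim\alpha_0^2 r^3+C(\alpha_0)r^2 p_\tau(\mu\rest kB)$, and the region $2B_1\cup 2B_2$ is covered by the same dichotomy applied to $L_{z_1,z_3}$, $L_{z_2,z_3}$ plus a curvature-based estimate of $\meas(L_{z_1,z_2},L_{z_1,z_3})$. Taking first $\alpha_0$ small, then $p_\tau(\mu\rest kB)/r$ small, gives $\beta_2(B)\leq\ve$. This is exactly the ``robust supply of good triples with a near-vertical escape hatch'' that your sketch asks for; note that the escape hatch is not ``select a triple away from the vertical'' but rather ``when the triple is near-vertical, the distance bound is free.'' Either carry that dichotomy through inside your limiting argument — at which point the compactness wrapper is superfluous — or replace your sketch with the direct estimate. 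There are also minor unaddressed points (passing the indicator of $\OO_\tau$ through the weak-$*$ limit needs continuous cutoffs; the shrink of $\tau$ this incurs should be noted), but they are secondary to the rigidity gap.
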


\begin{proof}
By Lemma \ref{balls} we can find three balls $B_1,B_2,B_3\subset 2B$
with equal radii $C_1^{-1}r(B)$ such that
$\mu(B_i\cap B)\geq \frac{\mu(B)} {C'_1}$ for $i=1,2,3$ and $5B_i\cap 5B_j=\varnothing$ if $i\neq j$.

By Chebyshev, there are sets $Z_i\subset B_i$ with $\mu(Z_i)\approx\mu(B_i)\approx\mu(B)$  such that for $r:=r(B)$ and $z \in Z_i$,
$$p_{\tau,\mu\rest kB}(z)\leq C\,\frac{p_\tau(\mu\rest kB)}r,$$
where here, as well as in the rest of the proof of the lemma, $C$ denotes a constant which depends on $C_1,C'_1,\tau,k,\delta$.
Given $z_1\in Z_1$, we choose $z_2\in Z_2$ such that 
$$p_{\tau,\mu\rest kB}(z_1,z_2)\leq C\,\frac{p_\tau(\mu\rest kB)}{r^2}.$$ 

If $w\in kB\setminus (2B_1\cup 2B_2)$, then $(z_1,z_2,w)\in \OO_\tau$, for $\tau \geq C_1+2k$, and so
either 
$$\theta_V(L_{z_1,z_2})+\theta_V(L_{z_1,w}) + \theta_V(L_{z_2,w})\leq \alpha_0,$$
and so $\dist(w,L_{z_1,z_2})\leq C\,\alpha_0\,r,$ 
or otherwise, by Lemma \ref{compperm},
\begin{equation*}
\begin{split}
p(z_1,z_2,w)&\geq c(\alpha_0,\tau)\, c(z_1,z_2,w)^2 = c(\alpha_0,\tau)\,\frac{\dist(w,L_{z_1,z_2})^2}
{|w-z_1|^2\,|w-z_2|^2} \\
&\geq C(\alpha_0)\,\frac{\dist(w,L_{z_1,z_2})^2}{r^4},
\end{split}
\end{equation*}
with the constants $C(\alpha_0)$ depending on $C_1,C'_1,\tau,k,\delta$ besides $\alpha_0$.
Thus in any case we get
\begin{align*}
\int_{w\in kB\setminus (2B_1\cup2B_2)} &\dist(w,L_{z_1,z_2})^2\,d\mu(w) \\
& \leq 
\int_{w\in kB\setminus (2B_1\cup2B_2)} \bigl[C\,\alpha_0^2 \,r^2+ C(\alpha_0)r^4\,p(z_1,z_2,w)\bigr]\,d\mu(w)\\
& 
\leq C\,\alpha_0^2\,r^3 + C(\alpha_0)\,r^4\,
p_{\tau,\mu\rest kB}(z_1,z_2)\leq C\,\alpha_0^2\,r^3 + C(\alpha_0)\,r^2\,p_\tau(\mu\rest kB).
\end{align*}

Now it remains to see what happens in $2B_1\cup 2B_2$. By Chebyshev,  there exists $z_3\in Z_3$
such that
$$p_{\tau,\mu\rest kB}(z_1,z_3) + p_{\tau,\mu\rest kB}(z_2,z_3)\leq C\,\frac{p_\tau(\mu\rest kB)}{r^2}$$
and
\begin{equation}\label{eqf5}
p(z_1,z_2,z_3)\leq C\,\frac{p_\tau(\mu\rest kB)}{r^3}.
\end{equation}
As above, we deduce that
\begin{equation}\label{eqf6}
\int_{w\in kB\setminus (2B_1\cup2B_3)} \dist(w,L_{z_1,z_3})^2\,d\mu(w)
\leq C\,\alpha_0^2\,r^3 + C(\alpha_0)\,r^2\,p_\tau(\mu\rest kB),
\end{equation}
and also
$$\int_{w\in kB\setminus (2B_2\cup2B_3)} \dist(w,L_{z_2,z_3})^2\,d\mu(w)
\leq C\,\alpha_0^2\,r^3 + C(\alpha_0)\,r^2\,p_\tau(\mu\rest kB).$$

Now we wish to estimate the angle $\meas (L_{z_1,z_2},L_{z_1,z_3})$. 
Recall that
$$c(z_1,z_2,z_3) =\frac{2\sin\meas (L_{z_1,z_2},L_{z_1,z_3})}{|z_2-z_3|},$$
and so
$\sin^2\meas (L_{z_1,z_2},L_{z_1,z_3})\leq C\,c(z_1,z_2,z_3)^2\,r^2.$
Then we deduce that
$$\sin^2\meas (L_{z_1,z_2},L_{z_1,z_3})\leq C\alpha_0^2 + C(\alpha_0)\,p(z_1,z_2,z_3)\,r^2.$$
Notice that, for any $w\in kB$, by elementary geometry we have  
$$\dist(w,L_{z_1,z_2})\leq \dist(w,L_{z_1,z_3}) + C\,r\sin\meas (L_{z_1,z_2},L_{z_1,z_3}).$$
Therefore,
$$\dist(w,L_{z_1,z_2})^2\leq 2\,\dist(w,L_{z_1,z_3})^2 + 
C\alpha_0^2 \,r^2+ C(\alpha_0)\,p(z_1,z_2,z_3)\,r^4.$$
Then, 
from \rf{eqf6} and \rf{eqf5} we obtain
\begin{align*}
& \int_{w\in kB\setminus (2B_1\cup2B_3)} \dist(w,L_{z_1,z_2})^2\,d\mu(w)\\& \leq 
\int_{w\in kB\setminus (2B_1\cup2B_3)}\bigl[
2\,\dist(w,L_{z_1,z_3})^2 + 
C\alpha_0^2 \,r^2+ C(\alpha_0)\,p(z_1,z_2,z_3)\,r^4\bigr]\,d\mu(w)\\
&\leq
C\,\alpha_0\,r^3 + C(\alpha_0)\,p_\tau(\mu\rest kB) \,r^2+ C(\alpha_0)\,p(z_1,z_2,z_3)\,r^5\\
&\leq
C\,\alpha_0\,r^3 + C(\alpha_0)\,p_\tau(\mu\rest kB)\,r^2.
\end{align*}

An analogous argument yields a similar estimate for 
$$\int_{w\in kB\setminus (2B_2\cup2B_3)} \dist(w,L_{z_1,z_2})^2\,d\mu(w).$$
So we get,
$$\int_{w\in kB} \dist(w,L_{z_1,z_2})^2\,d\mu(w)\leq
C\,\alpha_0\,r^3 + C(\alpha_0)\,p_\tau(\mu\rest kB)\,r^2,$$
and thus the lemma follows by taking $\alpha_0$ and 
$p_\tau(\mu\rest kB)/r$ both small enough.
\end{proof}

\section{Construction of a first Lipschitz graph}
As stated above, to construct the Lipschitz graph, we follow quite closely the arguments from
\cite{Leger}. First we need to define a family of stopping time
regions, which are the same as the ones defined in \cite[Subsection 3.1]{Leger}.
Let  $\delta,\ve,\alpha$ be positive constants to be fixed below and choose a point $x_0 \in F$. Then by Lemma \ref{q2.5} there exists a line $D_0$ such that $\beta_1^{D_0} (x_0,1) \leq \ve$. We set
 $$S_{total}= \left\{(x,t)\in F\times (0,5),\begin{array}{ll}
 \mbox{(i)}& \delta(x,t)\geq \frac12\delta\\
 \mbox{(ii)}& \beta_{1}(x,t)<2\ve \\
 \mbox{(iii)}& \exists L_{x,t} \mbox{ s.t. }\left\{\begin{array}{l}
\beta_{1}^{L_{x,t}}(x,t)\leq2\ve,\mbox{ and} \\
\meas(L_{x,t},D_0)\leq\alpha
\end{array}\right.
 \end{array} \!\!\!\!\!\right\}.
 $$
In the definition above to simplify notation we have denoted $\delta(x,r)\equiv\delta_{\mu_{|F}}B(x,r)$ and
$\beta_1(x,r)\equiv\beta_{1,\mu_{|F}}(B(x,r))$. Also 
$L_{x,t}$ stands for some line depending on $x$ and $t$.


For $x\in F$ we set
\begin{equation}\label{defhx}
h(x) = \sup\left\{t>0:\,\exists y\in F, \exists s,\frac
t3\geq s \geq\frac t4, x\in B\bigl(y,\frac s 3\bigr) \mbox{ and }
(y,s)\not\in S_{total}\right\},
\end{equation}
 and
 $$S= \left\{(x,t)\in S_{total}:\,t\geq h(x)\right\}.$$
Notice that if $(x,t)\in S$, then $(x,t')\in S$ for $t'$ such that $t<t'<5$.

Now we consider the following partition of $F$ which depends on the parameters  $\delta,\ve,\alpha$:
\begin{align*}
\ZZ & = \{x\in F:\,h(x)=0\},\\
F_1 & = \left\{x\in F\setminus \ZZ:\, \mbox{$\exists y\in F,
\exists s\in\bigl[ \frac{h(x)}5,\frac{h(x)}2\bigr],x\in
B(y,\frac s 2),\,
\delta(y,s)\leq\delta$}\right\},\\
F_2 & = \Bigl\{x\in F\setminus (\ZZ\cup F_1):\\
&\qquad \left.\mbox{$\exists y\in
F, \exists s \in\!\!\bigl[\frac{h(x)}5,\frac{h(x)}2\bigr],x\in\!
B(y,\frac s 2),\,
\beta_{1}(y, s)\geq\ve$}\right\},\\
F_3 & = \Bigl\{x\in F\setminus (\ZZ\cup F_1\cup F_2):\\
&\qquad \left. \mbox{$\exists y\in F, \exists s \in\bigl[
\frac{h(x)}5,\frac{h(x)}2\bigr],x\in B(y,\frac s 2),\, \meas
(L_{y,t},D_0)\geq\tfrac34\alpha$}\right\}.
\end{align*}
At this point we introduce some thresholds: 
\begin{itemize}
\item $\delta=10^{-10}/N$
\item $\theta_0=\pi /10^6$,
\item $\tau=10^4 C_1$,
\end{itemize}
with $C_1$ appearing first in Lemma \ref{balls} and $N$ is the overlap constant appearing in the Besicovitch covering theorem. Notice that $\tau$ depends on $\delta$, which was fixed earlier, and serves as threshold for the comparability of the triples in $\OO_\tau$. On the other hand $\theta_0$ will be a threshold for the angle $\theta_V (D_0)$. The parameter $\alpha$ will be tuned according to $\theta_V(D_0)$: if $\theta_V(D_0)>\theta_0$ we will choose $\alpha \leq \theta_0 /10$, if $\theta_V(D_0) \leq \theta_0$ then $\alpha=10 \theta_0$. Notice that always $\alpha <\frac{\pi}{10^4}$. Finally we will choose $\ve$ such that $\ve^{\frac{1}{50}}< \alpha$.

In the rest of the section we are going to lay down the necessary background that will lead us to the definition of the Lipschitz graph. We will denote by $\pi$ and $\pi^{\perp}$ the orthogonal projections on $D_0$ and $D_0^{\perp}$ respectively.
\begin{definition}
\label{dandD}
For all $x \in \C$ let 
$$d(x)=\inf_{(X,t) \in S}(d(x,X)+t)$$
and for $p \in D_0$, let
$$D(p)=\inf_{x\in \pi^{-1}(p)}d(x)=\inf_{(X,t)\in S}(d(\pi(X),p)+t).$$
\end{definition}
The following Lemma, whose proof can be found in \cite{Leger}, will be used several times. We state it for the reader's convenience.
\begin{lemma}[\cite{Leger}, Lemma 3.9]
\label{l39}
There exists a constant $C_2$ such that whenever $x,y \in F$ and $t \geq 0$ are such that $d(\pi (x), \pi (y))\leq t$, $d(x)\leq t$, $d(y)\leq t$ then $d(x,y) \leq C_2 t$.
\end{lemma}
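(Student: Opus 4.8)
\textbf{Proof plan for Lemma \ref{l39}.} The statement is purely about the geometry of the function $d$ (and its pushforward $D$ to $D_0$) built from the stopping-time family $S$, so I do not expect curvature to enter at all; the argument is the one in \cite[Lemma 3.9]{Leger}, and the plan is to reproduce it. The key point is that $d(x)$, being an infimum of the Lipschitz functions $z\mapsto d(z,X)+t$ over $(X,t)\in S$, is itself $1$-Lipschitz, and similarly $D$ is $1$-Lipschitz on $D_0$; moreover $d(x)\ge \operatorname{dist}(x,S_{\text{proj}})$-type lower bounds hold, where by a point of $S$ I abusively mean its first coordinate. So first I would fix $x,y\in F$ and $t\ge 0$ with $d(\pi(x),\pi(y))\le t$, $d(x)\le t$, $d(y)\le t$, and choose, by definition of the infimum, a pair $(X,s)\in S$ nearly realizing $d(x)$, i.e.\ with $d(x,X)+s\le 2t$ (say), so in particular $s\le 2t$ and $d(x,X)\le 2t$.

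The heart of the matter is to control $d(x,y)$ by comparing both $x$ and $y$ to the \emph{same} approximating line. Since $(X,s)\in S$, condition (ii)--(iii) in the definition of $S_{\text{total}}$ gives a line $L_{X,s}$ with $\beta_1^{L_{X,s}}(X,s)\le 2\ve$ and $\delta(X,s)\ge\tfrac12\delta$; by the usual pigeonhole/Chebyshev argument (exactly as in the proof that the stopping-time lines vary slowly in \cite{Leger}) the density lower bound forces $\mu$ to have substantial mass near $L_{X,s}$ at every scale between roughly $d(x)$ and $1$, and hence any point of $F$ whose $d$-value is $\lesssim t$ and whose $\pi$-projection is within $t$ of $\pi(X)$ must lie within $C\ve\,t$, and in particular within $Ct$, of $L_{X,s}$. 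Applying this to both $x$ and $y$: $x$ is close to $L_{X,s}$ by construction, and $y$ is close to $L_{X,s}$ because $d(\pi(y),\pi(X))\le d(\pi(y),\pi(x))+d(\pi(x),\pi(X))\le t + d(x,X)\le 3t$ while $d(y)\le t$. Since both points are within $Ct$ of a common line and their projections onto $D_0$ (which is within angle $\alpha$ of $L_{X,s}$) are within $3t$ of each other, elementary planar geometry gives $d(x,y)\le C_2 t$ with $C_2$ absolute.

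The step I expect to be the main obstacle — and the only place requiring real care — is the claim that $d(x)\lesssim t$ together with $d(\pi(x),\pi(X))\lesssim t$ forces $x$ to be $O(t)$-close to $L_{X,s}$ (and likewise for $y$). One has to chain estimates across scales: from $(X,s)\in S$ and the definition of $h$, every pair $(X',s')\in S$ with $X'$ near $X$ and $s'$ not much larger than $s$ has its line $L_{X',s'}$ at angle $\le 2\alpha$ from $L_{X,s}$ and passing within $C\ve s'$ of $X'$, so the lines form a "coherent" family; one then walks from scale $s$ up to scale $\sim t$ (or down, as needed) accumulating a geometric-series error $\sum C\ve\,2^{-j}t \lesssim \ve t \lesssim t$, and uses the Lipschitz property of $d$ to guarantee that at each intermediate point the relevant stopping pair exists. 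This is precisely the content of the proof of \cite[Lemma 3.9]{Leger}, so I would cite it for the details rather than reproduce the scale-chaining in full; the conclusion is that the constant $C_2$ depends only on the ambient constants ($C_0$, and the fixed choices of $\delta,\tau,\theta_0$), not on $\ve$ or $\alpha$, once those are taken small enough as prescribed.
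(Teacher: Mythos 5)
You are right that this lemma is purely metric with no curvature or permutations involved, that $d$ and $D$ are $1$-Lipschitz, and that the paper gives no proof but simply quotes \cite[Lemma 3.9]{Leger}; deferring the details to L\'eger, as you do, matches the paper exactly.

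Where the sketch goes wrong is precisely at the step you flag as the main obstacle. You propose to compare $x$ \emph{and} $y$ to the single line $L_{X,s}$, with $(X,s)\in S$ nearly realizing $d(x)$, so $s\leq 2t$. But $L_{X,s}$ only controls $\mu$ on the ball $B(X,ks)$ of radius $\lesssim t$, and whether $y$ (or the stopping pair $(Y,r)$ nearly realizing $d(y)$) lies anywhere near that ball is exactly what the lemma must establish; it cannot be assumed. Your claim that ``any point of $F$ whose $d$-value is $\lesssim t$ and whose $\pi$-projection is within $t$ of $\pi(X)$ must lie within $C\ve t$ of $L_{X,s}$'' is, up to constants, the lemma itself posited as a hypothesis, and the $C\ve t$ is moreover too strong: for a single point $x$ the $\beta_1$ and density conditions yield $O(t)$-proximity, not $O(\ve t)$. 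The right scale to work at is $R:=d(X,Y)$, which is a priori unbounded in terms of $t$. One uses the upward monotonicity of $S$ to get $(Y,R)\in S$, and then applies the density/Chebyshev argument in a ball around $X$ of radius $\rho\sim\sqrt{\ve/\delta}\,R$ (not $\rho\sim t$; if $\rho<s$ one already has $R\lesssim t$), obtaining $\dist(X,L_{Y,R})+\dist(Y,L_{Y,R})\lesssim\sqrt{\ve/\delta}\,R$. Combined with $\meas(L_{Y,R},D_0)\leq\alpha$ and $d(\pi(X),\pi(Y))\leq 5t$, this yields $R\leq Ct+C'\sqrt{\ve/\delta}\,R$, and absorbing the last term for $\ve$ small enough (depending on $\delta$) gives $R\lesssim t$ and hence $d(x,y)\leq d(x,X)+R+d(Y,y)\lesssim t$. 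There is no geometric series of coherence errors over scales to accumulate here — $s$ and $t$ are already comparable, so ``walking from scale $s$ up to $\sim t$'' has $O(1)$ steps and gains nothing; the closing step is a one-scale absorption, not a telescoping sum. Your citation of L\'eger is correct, but the preceding description of what his proof does is not.
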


We can now define a function $A$ on $\pi (\mathcal{Z})$ by 
$$A(\pi(x))=\pi^{\perp}(x)\ \text{for} \ x \in \mathcal{Z},$$
which is possible because for example by Lemma \ref{l39} $\pi:\mathcal{Z} \ra D_0$ is injective. Furthermore it is not difficult to see that the function $A:\pi(\mathcal{Z}) \ra D_0^{\perp}$ is $2 \alpha$-Lipschitz. In order to extend the function $A$ on the whole line $D_0$ a variant of Whitney's extension theorem is used in \cite{Leger}. Namely after a family of dyadic intervals on $D_0$ is chosen, for any $p \in D_0$ not on the boundaries of the dyadic intervals such that $D(p)>0$ we call $R_p$ the largest dyadic interval containing $p$ such that
$$\diam (R_p) \leq \frac{1}{20} \inf_{u \in R_p} D(u).$$
We relabel the collection of intervals $R_p$ as $\{R_i: i \in I\}$. The $R_i$'s have disjoint interiors and the family $\{2 R_i\}_{i \in I}$ is a covering of $D_0 \stm \pi (\mathcal{Z})$. In the following proposition we gather all their necessary properties for our purposes. For the proof see \cite[Lemma 3.11]{Leger} and the discussion before and afterwards. 

\begin{pr}
\label{ribi} Let $U_0=D_0 \cap B(0,10)$ and $I_0=\{i\in I:R_i \cap U_0 \neq \emptyset\}$. There exists a constant $C_3$ such that
\begin{enumerate}
 \item Whenever $10R_i \cap 10 R_j \neq \emptyset$ then
$$C_3^{-1} \diam (R_j) \leq \diam (R_i) \leq C_3 \diam (R_j).$$
\item For each $i \in I_0$ there exists a ball $B_i \in S$ such that 
$$\diam (R_i) \leq \diam (B_i) \leq C_3 \diam (R_i) \ \text{and} \ d(\pi (B_i),R_i) \leq C_3 \diam (R_i).$$
\end{enumerate}
\end{pr}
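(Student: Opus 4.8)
The plan is to follow David–Léger's Whitney extension scheme verbatim, since Proposition \ref{ribi} is a purely metric statement about the function $d$ (and its projection $D$) built from the stopping-time family $S$; the permutation hypothesis $p(\mu)\leq\eta$ plays no role here, it only entered (through Lemma \ref{q2.5}) in guaranteeing that $S_{total}$, and hence $S$, is nonempty and well-behaved, and will re-enter later when we estimate the Lipschitz constant of the extended function $A$. So the proof is essentially a transcription of \cite[Lemma 3.11]{Leger} adapted to our notation, and I would present it as such, giving the argument rather than merely citing it.

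First I would record the elementary properties of $d$ and $D$: both are $1$-Lipschitz (as infima of $1$-Lipschitz functions), $D(p)\leq d(x)$ for $\pi(x)=p$, and $D(p)>0$ exactly when $p\notin\pi(\ZZ)$ (using that $\ZZ=\{x:h(x)=0\}$ and the definition $d(x)=\inf_{(X,t)\in S}(d(x,X)+t)$, together with Lemma \ref{l39} to see that $d(x)=0$ forces $x\in\ZZ$). Next, for the dyadic Whitney intervals: fix the standard dyadic grid on $D_0$, and for $p\notin\pi(\ZZ)$ lying off all dyadic boundaries let $R_p$ be the \emph{largest} dyadic interval $R\ni p$ with $\diam(R)\leq\frac1{20}\inf_{u\in R}D(u)$ — this exists because $D(p)>0$ makes the condition hold for small enough $R$, and fails for all of $D_0$ since $D$ is bounded. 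Maximality of $R_p$ gives the key doubling estimate: the dyadic parent $\wh R_p$ of $R_p$ violates the defining inequality, so there is $u\in\wh R_p$ with $D(u)<20\,\diam(\wh R_p)=40\,\diam(R_p)$; combined with the $1$-Lipschitz bound on $D$ this controls $D$ from above on $\wh R_p\supset R_p$, while the defining inequality controls it from below, so $D\approx\diam(R_p)$ on, say, $10R_p$ up to an absolute constant.

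From there (i) is immediate: if $10R_i\cap10R_j\neq\varnothing$ then both $\diam(R_i)$ and $\diam(R_j)$ are comparable to $\inf D$ over a common region, so they are comparable to each other, with $C_3$ absolute. For (ii), given $i\in I_0$ pick any $p\in R_i$; since $D(p)\approx\diam(R_i)$ and $D(p)=\inf_{(X,t)\in S}(d(\pi(X),p)+t)$, choose $(X,t)\in S$ nearly realizing the infimum, so $t\lesssim\diam(R_i)$ and $d(\pi(X),p)\lesssim\diam(R_i)$; recall that membership in $S$ provides the ball $B(X,t)$, and from the lower bound $D\gtrsim\diam(R_i)$ on $R_i$ one gets $t\gtrsim\diam(R_i)$ as well (otherwise a much smaller ball would beat the infimum), whence $\diam(R_i)\leq\diam(B(X,t))\leq C_3\diam(R_i)$ and $d(\pi(B(X,t)),R_i)\leq C_3\diam(R_i)$. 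Set $B_i:=B(X,t)$. The facts that the $R_i$ have disjoint interiors and that $\{2R_i\}$ covers $D_0\setminus\pi(\ZZ)$ are standard consequences of the maximal-dyadic-interval construction together with the observation that near a point of $\pi(\ZZ)$ the intervals $R_p$ shrink to zero, so no uncovered point can survive. The only mildly delicate point — the ``main obstacle'' such as it is — is bookkeeping the constants so that a single absolute $C_3$ serves all three clauses simultaneously; there is no real analytic difficulty, which is why the paper defers to \cite{Leger}, and I would do the same while indicating the steps above.
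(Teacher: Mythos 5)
The paper does not give its own proof of Proposition~\ref{ribi}; it defers entirely to \cite[Lemma 3.11]{Leger} and the surrounding discussion. Your plan --- reconstructing the standard Whitney-type machinery from the $1$-Lipschitz function $D$ --- is the only reasonable route and is exactly what L\'eger does, so there is no genuine divergence of approach. Most of what you write is correct: the $1$-Lipschitz bounds for $d$ and $D$, the existence and maximality of $R_p$, the two-sided control $D\approx\diam(R_p)$ on a fixed dilate coming from the defining inequality on $R_p$ together with the failure on the dyadic parent, and the comparability statement in (i) all go through as you say.

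The one step that does not work as written is in (ii), where you assert that a near-minimizer $(X,t)$ of $D(p)=\inf_{(X,t)\in S}(d(\pi(X),p)+t)$ must have $t\gtrsim\diam(R_i)$, ``otherwise a much smaller ball would beat the infimum.'' This reasoning is incorrect: it is entirely possible that $t$ is tiny while the term $d(\pi(X),p)$ absorbs essentially all of $D(p)$, and nothing is ``beaten'' --- the infimum is still achieved to within $\ve$. Even if you could secure $t\gtrsim\diam(R_i)$, the precise inequality $\diam(R_i)\leq\diam(B_i)$ in the statement would not follow. The clean fix, which is what L\'eger uses, is the monotonicity of $S$ in the $t$-variable recorded in the paper right after its definition: if $(x,t)\in S$ then $(x,t')\in S$ for all $t\leq t'<5$. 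One therefore chooses the near-minimizer $(X,t)$ as you do, sets $t'=\max(t,\tfrac12\diam(R_i))$ (which is still $<5$ since $\diam(R_i)\leq\tfrac{1}{20}D(p)$ is bounded for $i\in I_0$), and takes $B_i=B(X,t')$. Then $\diam(B_i)=2t'\geq\diam(R_i)$, while $t'\lesssim\diam(R_i)$ and $d(\pi(X),p)\lesssim\diam(R_i)$ give the upper bounds. With this repair your argument is complete.
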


Finally let $A_i:D_0 \ra D_0^{\perp}$ be the affine functions with graphs $D_{B_i}$. By the definition of $S_{total}$ the $A_i$'s are $2 \alpha$-Lipschitz. Using an appropriate partition of unity it is not hard to extend $A$ on $U_0 \stm \pi(\ZZ)$ such that it is $C_L \alpha$-Lipschitz on $U_0$, see \cite[p.848-850]{Leger}.

\section{The main step}
For the rest of the section the stopping time regions $\ZZ,F_1,F_2,F_3$, their defining paramaters $\delta,
\ve,\alpha$ and the Lipschitz function $A$ will be as in the previous section. The main step for the proof of Proposition \ref{mainprop} consists in proving the following lemma.
\begin{lemma}
\label{farfromv}
Under the assumptions of Proposition \ref{mainprop}, if furthermore $\theta_V(D_0)> \theta_0$ 
 there exists a Lipschitz graph $\Gamma$ such that $\mu (\Gamma) \geq \frac{99}{100} \mu (\C).$
\end{lemma}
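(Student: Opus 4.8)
The goal is to show that, under the hypotheses of Proposition~\ref{mainprop} together with $\theta_V(D_0)>\theta_0$, the Lipschitz function $A$ built in Section~5 actually captures $99\%$ of the mass of $\mu$. The overall strategy mirrors L\'eger's argument: one must show that the ``bad'' set $F\setminus\big(\ZZ\cup\bigcup_i(\text{pieces lying over }R_i)\big)$ carries negligible mass, and then show that the part of $\mu$ sitting far (vertically) from the graph of $A$ is negligible as well. Concretely, I would proceed in the following steps. First, estimate $\mu(F_1)$, $\mu(F_2)$, $\mu(F_3)$. The bound on $\mu(F_1)$ is the density/growth argument of L\'eger and uses only $C_0$-linear growth and the stopping density threshold $\delta$; it is essentially unchanged. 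For $\mu(F_2)$ and $\mu(F_3)$, one must convert the $\beta_1$-flatness failure, respectively the angle-deviation failure, into a lower bound for a localized permutation integral, using Lemma~\ref{q2.5} (the qualitative $\beta_2$ vs.\ $p_\tau$ estimate) and Lemma~\ref{compperm} (comparability of $p$ with curvature $c^2$ on $\OO_\tau$-triples whose lines are quantitatively far from vertical). This is exactly where the hypothesis $\theta_V(D_0)>\theta_0$ is used: since $D_0$ is far from vertical and all the relevant lines $L_{x,t}$ are within $\alpha\ll\theta_0$ of $D_0$, every triangle arising in a stopping ball has at least one side far from the vertical, so Lemma~\ref{compperm} applies with a uniform constant and curvature controls $p$ from above on these configurations. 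Summing over a Besicovitch subcover of the relevant balls and invoking $p(\mu)\le\eta$ gives $\mu(F_2)+\mu(F_3)\lesssim C(\delta,\ve,\alpha)\,\eta\,\mu(\C)$, which is as small as we wish once $\eta$ is chosen last.

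\textbf{Second}, control $\mu(\ZZ\setminus\Gamma)$ and, more importantly, the mass of $\mu$ over the Whitney squares $R_i$ that fails to lie close to the graph of $A$. For each $i\in I_0$ one has the stopping ball $B_i\in S$ from Proposition~\ref{ribi} with $\diam B_i\approx\diam R_i$ and $\beta_1^{L_{B_i}}(B_i)\le 2\ve$, $\meas(L_{B_i},D_0)\le\alpha$, so $\mu$ restricted to (a fixed dilate of) $B_i$ is $\ve$-close in the $\beta_1$ sense to the affine graph $A_i$, which is in turn uniformly close to the graph of $A$ over $R_i$ because the $A_i$ are glued by the partition of unity. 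Hence
\begin{equation*}
\sum_{i\in I_0}\int_{\pi^{-1}(R_i)\cap F}\operatorname{dist}(y,\Gamma_A)\,d\mu(y)\ \lesssim\ \ve\,\sum_{i\in I_0}\diam(R_i)\ \lesssim\ \ve,
\end{equation*}
using the bounded overlap of the $\{2R_i\}$ and the linear growth of $\mu$. Combined with Step~1, a Chebyshev argument shows that outside a set of $\mu$-measure $\le \tfrac{1}{100}\mu(\C)$ every point of $F$ lies within distance, say, $100\ve^{1/2}$ of $\Gamma_A$; projecting these points vertically onto $\Gamma_A$ and absorbing the error into a slightly larger Lipschitz graph (this is the standard ``the graph of a Lipschitz function plus a small-measure correction is again contained in a Lipschitz graph'' maneuver, using $C_L\alpha<1$) yields a single Lipschitz graph $\Gamma$ with $\mu(\Gamma)\ge\tfrac{99}{100}\mu(\C)$.

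\textbf{Third}, assemble the constants in the correct order: fix $C_0$; then $\delta,\theta_0,\tau$ as in Section~5; then $\alpha$ according to $\theta_V(D_0)>\theta_0$ (so $\alpha\le\theta_0/10$); then $\ve$ with $\ve^{1/50}<\alpha$ small enough that the geometric error terms above are $<\tfrac{1}{200}\mu(\C)$; and finally $\eta=\eta(C_0,\delta,\ve,\alpha)$ small enough that $C(\delta,\ve,\alpha)\,\eta<\tfrac{1}{200}$, which makes $\mu(F_1)+\mu(F_2)+\mu(F_3)+\mu(\text{far part})\le\tfrac{1}{100}\mu(\C)$.

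\textbf{Main obstacle.} The routine parts (the $F_1$ density estimate, the Whitney gluing, the final Lipschitz-graph correction) are essentially as in \cite{Leger}. The real difficulty is the $F_2$ and $F_3$ estimates, i.e.\ converting failure of flatness or of angular alignment into a quantitative lower bound for $p_\tau$ on a ball, which is where $p$ must be compared from \emph{above} by curvature on non-degenerate triangles. Here one cannot simply cite L\'eger's Lemma~2.5: permutations of $K_n$ are comparable to curvature only on $\OO_\tau$-triples with a side far from vertical (Lemma~\ref{compperm}), so one must carefully verify that the stopping construction only ever produces such triples — this is exactly guaranteed by $\theta_V(D_0)>\theta_0$ and $\alpha\ll\theta_0$ — and then handle the contributions of the degenerate (nearly-vertical or nearly-collinear) configurations by hand, as in the case analysis of Lemma~\ref{compperm}. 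Keeping track of the dependence of all constants on $\delta,\tau,k,\alpha_0$ through this chain of estimates, so that the final choice of $\eta$ is legitimate, is the delicate bookkeeping that occupies the bulk of the argument.
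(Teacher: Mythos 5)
Your outline gets $F_2$ right (covering argument plus Lemma \ref{q2.5}), but the treatment of $F_3$ — which is the heart of the matter — would fail. You propose to convert the failure $\meas(L_{y,t},D_0)\geq\tfrac34\alpha$ directly into a lower bound for a localized permutation integral on the corresponding ball and then sum over a Besicovitch subcover. There is no such lower bound: the angle condition is relative to the \emph{global} direction $D_0$, not a statement of local non-flatness, and a stopping ball whose best approximating line is tilted by $\tfrac34\alpha$ can carry a measure that is locally supported on that tilted line, hence has $p_\tau$ equal to zero there. The tilting is an effect accumulated across scales, and the paper (following L\'eger) quantifies it indirectly: Lemma \ref{2.5'} and Lemma \ref{new2.4} give a Carleson-type estimate of $\iint_{\widetilde S_\lambda}\beta_1(x,t)^2\,d\mu(x)\,dt/t$ by $p(\mu)$ (this is where $\theta_V(D_0)>\theta_0$ and $\alpha\leq\theta_0/10$ enter, via Lemma \ref{compperm}); this feeds into Proposition \ref{41}, the square-function estimate for the affine-approximation numbers $\gamma(p,t)$ of the function $A$; and only then do the Fourier-analytic arguments of \cite[Sections 4--5]{Leger} bound the oscillation of $A$ and hence $\mu(F_3)$. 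None of this machinery appears in your plan, and no covering argument can replace it.

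There are two further gaps. First, the $F_1$ bound is not "essentially unchanged density/growth": it requires $\mu(F\setminus\widetilde F)\lesssim\ve^{1/2}$, whose key step is bounding the set $G$ of points lying over $3R_i$ but outside $KB_i$ (the paper's Lemma \ref{314}, the analogue of L\'eger's Lemma 3.14), and this is precisely one of the places where the permutations must be compared with curvature. The same omission undermines your displayed inequality in Step 2: the bound $\beta_1^{L_{B_i}}(B_i)\leq2\ve$ controls only the mass inside $kB_i$, so it says nothing about $\dist(y,\Gamma_A)$ for $y\in\pi^{-1}(R_i)$ far from $B_i$ — exactly the points of $G$. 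Second, your final "absorb the points within $100\,\ve^{1/2}$ of the graph into a slightly larger Lipschitz graph" is not a valid maneuver: a set of positive measure lying near a Lipschitz graph need not be contained in any Lipschitz graph (two such points may share the same projection on $D_0$). The paper avoids this entirely: it takes $\Gamma$ to be the graph of $A$ over $U_0$, observes $\ZZ\subset\Gamma$, and concludes from $\mu(F_1)+\mu(F_2)+\mu(F_3)\leq\tfrac1{100}\mu(F)$, so no "correction" of the graph is needed.
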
 
 
For the proof, we will choose $\Gamma=\{(x,A(x)):x \in U_0\}$ and then we will show that
\begin{equation}
\label{f123}
\mu (F_1)+\mu(F_2)+\mu(F_3) \leq \frac{1}{100} \mu (F)
\end{equation}
because $\ZZ \subset \{(x,A(x)):x \in U_0\}$. 

Clearly Lemma \ref{farfromv} implies Proposition \ref{mainprop} when $\theta_V(D_0) >\theta_0$. In the case $\theta_V(D_0) \leq \theta_0$, which we deal with in Section 7, $\mu (F_1)+\mu(F_2)$ will again be very small. However (\ref{f123}) may fail because $\mu(F_3)$ may be big. In this case the construction of the desired Lipschitz graph, in the sense of  Proposition \ref{mainprop}, will consist of two steps. The first step is similar to the one for the case $\theta_V(D_0) > \theta_0$ although (\ref{f123}) is not guaranteed because as already mentioned $\mu(F_3)$ may be too big. Whenever this happens we can find a family of disjoint balls $\{B_i\}$ which cover a big proportion of $F_3$ and whose best approximating lines are far from the vertical due to the choice of $\alpha= 10 \theta_0$. Then we will apply Lemma \ref{farfromv} to obtain Lipschitz graphs $\Gamma_i$ on each ball $B_i$. The final graph $\Gamma$ will be constructed by connecting the graphs $\Gamma_i$ by line segments. 
 
We start by estimating the measure of $F_2$. Notice that for this lemma we do not need to assume that $\theta_V(D_0)> \theta_0$.
\begin{pr} 
\label{f2}
Under the assumptions of Proposition \ref{mainprop} we have
$$\mu(F_2) \leq 10^{-6}.$$
\end{pr}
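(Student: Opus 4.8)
The plan is to cover $F_2$ by a controlled family of balls, each of which carries a definite portion of the permutation mass $p_\tau(\mu)$, and then to use that $p(\mu)\le\eta$ is as small as we like; the hypothesis $\theta_V(D_0)>\theta_0$ plays no role. First, to each $x\in F_2$ we attach a ``bad ball''. By the definition of $F_2$ there are $y_x\in F$ and $s_x\in[h(x)/5,h(x)/2]$ with $x\in B(y_x,s_x/2)$ and $\beta_{1,\mu}(B_x)\ge\ve$, where $B_x:=B(y_x,s_x)$. Since $x\notin F_1$, no pair $(y,s)$ with $s\in[h(x)/5,h(x)/2]$ and $x\in B(y,s/2)$ can have $\delta_\mu(B(y,s))\le\delta$; applying this to $(y_x,s_x)$ we get $\delta_\mu(B_x)>\delta$. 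Next we pass from $\beta_1$ to $\beta_2$: by Cauchy--Schwarz and the $C_0$-linear growth of $\mu$, for every line $D$,
$$\beta_{1,\mu}^D(B_x)\le\Big(\frac{\mu(B(y_x,ks_x))}{s_x}\Big)^{1/2}\beta_{2,\mu}^D(B_x)\le(kC_0)^{1/2}\,\beta_{2,\mu}^D(B_x),$$
so that $\beta_{2,\mu}(B_x)\ge\ve':=\ve\,\bigl(2(kC_0)^{1/2}\bigr)^{-1}>0$. Applying the contrapositive of Lemma \ref{q2.5} to $B_x$ with accuracy parameter $\ve'$ (the choice $\tau=10^4C_1$ makes $\tau$ ``big enough'' there), we obtain a constant $\delta_1=\delta_1(\delta,\ve')>0$, depending only on $C_0$ and absolute constants, with
$$p_\tau(\mu\rest kB_x)\ \ge\ \delta_1\,\mu(B_x)\ \ge\ \delta_1\delta\,s_x.$$

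Now comes a Besicovitch selection. Put $\wt B_x:=B(x,(k+1)s_x)$, which contains $kB_x$; the radii $(k+1)s_x$ are uniformly bounded (the measure being supported in $B(0,2)$), the balls $\wt B_x$ cover $F_2$, and each is centred at a point of $F_2$. The Besicovitch covering theorem provides a countable subfamily $\{\wt B_i\}_i$, with $\wt B_i=\wt B_{x_i}$, $B_i:=B_{x_i}$, $s_i:=s_{x_i}$, that covers $F_2$ and has overlap $\sum_i\chi_{\wt B_i}\le N$. Using in turn $C_0$-linear growth, the bound of Step~1, the inclusion $kB_i\subset\wt B_i$, and the positivity of $p$ (Proposition \ref{posperm}),
$$\mu(F_2)\le\sum_i\mu(\wt B_i)\le C_0(k+1)\sum_i s_i\le\frac{C_0(k+1)}{\delta_1\delta}\sum_i p_\tau(\mu\rest kB_i)\le\frac{C_0(k+1)}{\delta_1\delta}\sum_i p_\tau(\mu\rest\wt B_i).$$
Finally, a triple $(z_1,z_2,z_3)$ lies in $\wt B_i\times\wt B_i\times\wt B_i$ only if $z_1\in\wt B_i$, so for every triple $\#\{i:z_1,z_2,z_3\in\wt B_i\}\le\#\{i:z_1\in\wt B_i\}\le N$; integrating $p\ge0$ over $\OO_\tau$ gives $\sum_i p_\tau(\mu\rest\wt B_i)\le N\,p_\tau(\mu)\le N\,p(\mu)\le N\eta$. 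Hence $\mu(F_2)\le C_0(k+1)N\,\delta_1^{-1}\delta^{-1}\,\eta$. Every constant on the right depends only on $C_0$ and absolute constants (the thresholds $\delta,\ve,\tau$, hence $\ve'$ and $\delta_1$, were fixed in Section~5 as functions of $C_0$), so it is enough to have imposed $\eta\le 10^{-6}\,\delta_1\delta\,C_0^{-1}(k+1)^{-1}N^{-1}$ among the finitely many smallness conditions defining $\eta$ in Proposition \ref{mainprop}; this yields $\mu(F_2)\le10^{-6}$.

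The only substantial ingredient is the implication ``$\beta_2$ large $\Rightarrow$ $p_\tau(\mu\rest kB)\gtrsim\mu(B)$'', which is exactly the contrapositive of the already-established Lemma \ref{q2.5}; the rest is a routine covering argument. The point worth emphasising is why bounded overlap of the balls $\wt B_i$ themselves — rather than of their dilates — suffices in the last step: $p_\tau$ integrates only over triples all of whose three vertices sit in one common $\wt B_i$, so the overcounting when summing the $p_\tau(\mu\rest\wt B_i)$ is controlled simply by how many $\wt B_i$ contain a single point.
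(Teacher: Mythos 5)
Your proposal is correct and follows essentially the same route as the paper: for each $x\in F_2$ you extract a ball with density $\ge\delta$ and $\beta_1\ge\ve$, convert the $\beta_1$ lower bound to a $\beta_2$ lower bound via Cauchy--Schwarz and linear growth, apply the contrapositive of Lemma~\ref{q2.5} to get $p_\tau(\mu\rest kB)\ge\delta_1\mu(B)$, and then sum over a covering family using the positivity of $p$. The only cosmetic deviation is that you invoke Besicovitch (bounded overlap $\le N$, with the balls $\wt B_i$ centred in $F_2$) where the paper uses the $5r$-covering theorem (a disjoint subfamily of the dilated balls $B(y_i,k\tau_i)$, hence disjoint integration domains for $p_\tau$); both yield the same bound $\mu(F_2)\lesssim_{C_0,\delta,\delta_1}\eta$ and hence the claim once $\eta$ is taken small.
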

\begin{proof} Recalling the definitions of the sets $F_1$ and $F_2$ we deduce that for every $x \in F_2$ there exist $y_x \in F$ and $\tau_x \in [\frac{h(x)}{5}, \frac{h(x)}{2}]$ such that $x\in B(y_x,\tau_x),\beta_1(y_x,\tau_x) \geq \ve$ and $\delta (y_x, \tau_x) >\delta$. Therefore since $k>1$,
$$F_2 \subset \cup_{x \in F_2} B(y_x, k\tau_x).$$
By the 5r-covering Theorem there exists an at most countable set $I$ such that,
\begin{enumerate}
\item $F_2 \subset \cup_{i \in I} B(y_i, 5k\tau_i), y_i \in F$,
\item the balls $B(y_i,k\tau_i)$ are pairwise disjoint,
\item $\beta_1 (y_i, \tau_i) \geq \ve$,
\item $\delta (y_i, \tau_i) >\delta$.
\end{enumerate}
Notice also that since $\mu$ has linear growth,
$$\mu (B(y_i,5k\tau_i)) \leq C_0 5k \tau_i, \ \text{for all} \ i \in I,$$
and by (iv), $\mu (B(y_i, \tau_i))> \delta \tau_i$ hence,
$$\mu (B(y_i,5k \tau_i)) \leq \frac{5kC_0}{\delta} \mu (B(y_i, \tau_i)),i \in I.$$
Furthermore, by Lemma \ref{q2.5}, since $\beta_1 (y_i, \tau_i) \geq \ve$ there exists some $\delta_1=\delta_1(\delta,\ve,\tau)$ such that 
$$\mu (B(y_i, \tau_i)) \leq \frac{p_\tau (\mu \lfloor B(y_i, k\tau_i))}{\delta_1}.$$
Therefore,
\begin{equation*}
\begin{split}
\mu (F_2) &\leq \sum_{i\in I} \mu ( B(y_i, 5k\tau_i)) \leq \frac{5kC_0}{\delta} \sum_{i\in I} \mu (B(y_i,\tau_i))\\
&\leq \frac{5kC_0}{\delta \delta_1} \sum_{i \in I} p_\tau (\mu \lfloor B(y_i,k \tau_i)) \leq \frac{5kC_0}{\delta \delta_1} p_\tau (\mu) \leq \frac{C \eta}{\delta \delta_1}\leq 10^{-6},
\end{split}
\end{equation*}
as $\eta$ will be chosen last and hence much smaller that $\ve, \delta$ and $\delta_1$.
\end{proof}

We know shift our attention to the set $F_1$.
\begin{pr} 
\label{f1}
Under the assumptions of Proposition \ref{mainprop} we have
$$\mu(F_1) \leq 10^{-6}.$$
\end{pr}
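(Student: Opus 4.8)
The plan is to mimic the structure of the proof of Proposition \ref{f2}, but to deal with the extra difficulty that on $F_1$ we only control the \emph{density} from above (via $\delta(y,s)\le\delta$), not a lower density bound, so Lemma \ref{q2.5} is not directly applicable. First I would unwind the definition: for every $x\in F_1$ there are $y_x\in F$ and a scale $s_x\in[\tfrac{h(x)}5,\tfrac{h(x)}2]$ with $x\in B(y_x,\tfrac{s_x}2)$ and $\delta(y_x,s_x)\le\delta$. Applying the $5r$-covering theorem to the family $\{B(y_x,s_x)\}_{x\in F_1}$ I would extract an at most countable subfamily of balls $B_i=B(y_i,s_i)$ with $B(y_i,s_i)$ pairwise disjoint, $F_1\subset\bigcup_i B(y_i,5s_i)$, and $\mu(B(y_i,s_i))\le\delta\, s_i$ for all $i$. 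The main point is then that each such ``low density'' ball must still be close to some ``full'' ball belonging to $S$, because by the definition of $h(x)$ (see \rf{defhx}) and of the stopping regions, the point $x$ lies essentially at the scale $h(x)$ in some ball of $S_{total}$; in particular there is a companion ball $\tilde B_i\in S$ with $r(\tilde B_i)\approx s_i$, $\delta_\mu(\tilde B_i)\ge\tfrac12\delta$, and $\tilde B_i\subset C\,B_i$ for an absolute constant $C$ depending only on the covering constants.

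The next step is the key geometric estimate. On the companion ball $\tilde B_i\in S$ we have $\delta_\mu(\tilde B_i)\ge\frac12\delta$ and an approximating line $L_i=L_{\tilde B_i}$ with $\beta_1^{L_i}(\tilde B_i)\le 2\ve$, while on $B_i$ we have the opposite, low-density information $\mu(B_i)\le\delta\,s_i$. Since $\tilde B_i$ carries mass $\gtrsim\delta\,s_i$ concentrated within $2\ve\,r(\tilde B_i)$ of the line $L_i$, the part of $B_i$ near $L_i$ can have mass at most $\mu(B_i)\le\delta\,s_i$; combining these two facts forces a fixed proportion of the mass of $\mu\rest \tilde B_i$ to sit at distance $\gtrsim r(\tilde B_i)$ from $L_i$ in the transverse direction, i.e.\ $\tilde B_i$ cannot be $\ve$-flat. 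Quantitatively this means $\beta_2(\tilde B_i)\gtrsim c(\delta)$ for a constant depending only on $\delta$ (here one must also use that the three ``heavy'' sub-balls produced by Lemma \ref{balls} inside $\tilde B_i$ cannot all be crammed into a thin $\ve$-neighbourhood of $L_i$ once $\ve$ is small relative to $\delta$; the precise argument is the dichotomy between ``$B_i$ absorbs too much mass near $L_i$'' and ``$B_i$ is too light''). Then Lemma \ref{q2.5} (in its contrapositive form) gives
$$\frac{p_\tau(\mu\rest k\tilde B_i)}{\mu(\tilde B_i)}\ge \delta_1(\delta),$$
hence $\mu(\tilde B_i)\le \delta_1^{-1}\,p_\tau(\mu\rest k\tilde B_i)$, and since $\mu(B(y_i,5s_i))\le \frac{5C_0 s_i}{\tfrac12\delta\,r(\tilde B_i)/C}\,\mu(\tilde B_i)\lesssim_{C_0,\delta}\mu(\tilde B_i)$ we get, after summing and using bounded overlap of the $k\tilde B_i$ (which follows from the disjointness of the $B(y_i,s_i)$ and $r(\tilde B_i)\approx s_i$ plus $\tilde B_i\subset CB_i$),
$$\mu(F_1)\le\sum_i\mu(B(y_i,5s_i))\lesssim_{C_0,\delta,\delta_1}\sum_i p_\tau(\mu\rest k\tilde B_i)\lesssim p_\tau(\mu)\le C\eta,$$
which is $\le 10^{-6}$ once $\eta$ is chosen small enough (recall $\eta$ is selected last, much smaller than $\delta,\ve,\delta_1$).

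The step I expect to be the main obstacle is the geometric dichotomy: extracting from ``$B_i$ has density $\le\delta$'' combined with ``$\tilde B_i$ has density $\ge\tfrac12\delta$ and $\ve$-flatness along $L_i$'' a genuine \emph{lower} bound on $\beta_2(\tilde B_i)$, uniformly in $i$ and independent of $\ve$. One has to be careful that the low-density ball $B_i$ and the flat ball $\tilde B_i$, while of comparable size and nested in $CB_i$, need not be concentric, so the heavy sub-balls of $\tilde B_i$ from Lemma \ref{balls} may or may not meet $B_i$; the argument must split according to how the line $L_i$ sits relative to $B_i$ and use that a positive fraction of the mass of $\mu\rest\tilde B_i$ lies outside $B_i$ precisely because $B_i$ is too light to accommodate it. Once this flatness-failure is in hand, the remainder is a routine packing argument identical in spirit to the proof of Proposition \ref{f2}. (For the detailed bookkeeping relating $h(x)$, the companion ball in $S$, and the covering constants, one argues exactly as in \cite[proof of the estimate for $F_1$]{Leger}, the only new ingredient being the replacement of the curvature-based flatness lemma by Lemma \ref{q2.5}.)
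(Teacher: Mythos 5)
There is a genuine gap here, and in fact the whole strategy cannot work. Your key step --- that a low--density ball $B_i$ sitting next to a companion ball $\tilde B_i\in S$ of comparable size with $\delta_\mu(\tilde B_i)\ge\tfrac12\delta$ and $\beta_1^{L_i}(\tilde B_i)\le 2\ve$ forces $\beta_2(\tilde B_i)\gtrsim c(\delta)$ --- is false: the mass of $\mu\rest\tilde B_i$ need not be inside $B_i$ at all, it can lie on $L_i\cap(\tilde B_i\setminus B_i)$, so there is no tension between the flatness of $\tilde B_i$ and the lightness of $B_i$. Concretely, let $\mu$ be arclength on $\bigl([-1.9,-0.2]\cup[0,a]\cup[0.2,1.9]\bigr)\times\{0\}$ with $0<a\ll\delta$. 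Then $\mu(B(0,1))\ge 1$, $\mu$ has linear growth, $D_0$ is the real axis and $p(\mu)=0$ (all triples are collinear), so every ball is perfectly flat; yet every $x\in[0,a]$ has $h(x)\approx 1/2$ and belongs to $F_1$, since balls $B(x,s)$ with $a/\delta\lesssim s\lesssim 0.15$ carry mass at most $a\le\delta s$, while the companion ball $B(x,h(x))\in S$ reaches the long segments and is flat with density bounded below. Thus $\mu(F_1)\ge a>0$ although $p(\mu)=0$, so no estimate of the form $\mu(F_1)\le C(C_0,\delta,\ve,\tau)\,\eta$ can hold, and the final chain $\mu(F_1)\lesssim\sum_i p_\tau(\mu\rest k\tilde B_i)\lesssim\eta$ you aim for is unobtainable. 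In other words, the smallness of $\mu(F_1)$ is simply not a consequence of the smallness of $p(\mu)$; your argument transplants the $F_2$ mechanism to a set where it does not apply.

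The actual mechanism, which is the one the paper uses following L\'eger, is different. One first shows that most of $F$ lies close to the graph of the Lipschitz function $A$, namely $\mu(F\setminus\tilde F)\le C\ve^{1/2}$ (as in \cite[Proposition 3.18]{Leger}), and then $\mu(F_1\cap\tilde F)\le 10^{-7}$ follows from a Besicovitch covering of $F_1\cap\tilde F$ by the low--density balls: since their centers lie near the graph and they have bounded overlap, the sum of their radii is controlled by the length of the graph, so the total mass is $\lesssim N\delta$ --- this is precisely why $\delta$ was fixed as $10^{-10}/N$ and has nothing to do with $\eta$. The permutations $p$ enter only through the exceptional set $G$ inside the proof of the $\tilde F$ estimate (L\'eger's Lemma 3.14), and that is exactly the step the paper reworks in Lemma \ref{314}: for $x\in G$ one produces two heavy, well--separated, nearly collinear sets $S_1,S_2$ in a stopping ball, checks that the relevant triples lie in $\OO_\tau$ with a side far from the vertical (treating $\theta_V(D_0)>\theta_0$ and $\theta_V(D_0)\le\theta_0$ separately), and applies Lemma \ref{compperm} to replace curvature by $p$, yielding $\mu(G)\le C(\delta,\theta_0)\,\eta$. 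So the correct proof splits $F_1$ along $\tilde F$, controls the good part by the choice of $\delta$ and the graph, and uses $\eta$ only for the set $G$; it does not (and cannot) bound all of $F_1$ by a packing argument with $p_\tau$.
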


\begin{proof} The main point in the proof of this estimate in \cite{Leger} is to show that most of $F$ lies near the graph of $A$. This amounts to showing that 
\begin{equation}
\label{ftildf}
\mu(F\stm \tilde{F}) \leq C \ve^{\frac{1}{2}}
\end{equation} 
as in \cite[Proposition 3.18]{Leger}. The set
\begin{equation}
\label{ftild}
\tilde{F}:=\{x \in F \stm G : d(x,\pi (x)+A(\pi(x))) \leq \ve^{\frac{1}{2}}d(x)\}.
\end{equation}
 can be thought as a good part of $F$ while the definition of $G$ is given in Lemma \ref{314}. Once this is established the desired estimate for $F_1$ is essentialy achieved because as an application of the Besicovich covering theorem it is relatively standard to show that $\mu (F_1 \cap \tilde{F}) \leq 10^{-7}$, see \cite[Proposition 3.19]{Leger}.

The most crucial step for the proof of (\ref{ftildf}) in \cite[Proposition 3.18]{Leger} is \cite[Lemma 3.14]{Leger}. Nevertheless this is the only part in the proof where the curvature is involved, therefore we provide a modified argument in the following lemma. All the other parts in the proof of \cite[Proposition 3.18]{Leger} can be applied to our setting without changes.

\begin{lemma}
\label{314}
For $K>1$, set
$$G=\{x \in F\setminus \ZZ: \exists i, \pi (x) \in 3R_i \ \text{ and } \ x \notin KB_i \} \cup \{x \in F \stm \ZZ: \pi (x) \in \pi (\ZZ)\}.$$
If $K$ is big enough,
$$\mu (G) \leq C \eta,$$
where $C=C(\delta,\theta_0)$.
\end{lemma}
\begin{proof} First suppose that $x \in G\stm \pi^{-1}(\pi(\ZZ))$. Then there exist some $i$ such that $\pi (x) \in 3R_i$ and $x \notin KB_i$. By Proposition \ref{ribi} there exists some absolute constant $C_3>1$ such that if $X_i$ is the center of the ball $B_i$ we have,
$$d(\pi (x), \pi (X_i))\leq C_3 \diam (B_i) \quad \text{and} \quad d(X_i) \leq C_3 \diam(B_i).$$
Let $K>100C_2 C_3$ and $t=\max (d(x),\frac{K}{3C_2}\diam(B_i))$. Then
\begin{equation}
\label{compdk}
d(\pi (x), \pi (X_i))< \frac{K}{3C_2}\diam(B_i) \quad \text{and} \quad d(X_i) < \frac{K}{3C_2}\diam(B_i).
\end{equation}
Then by Lemma \ref{l39} applied to $x,X_i$ and $t$  we deduce that $d(x,X_i)\leq C_2 t$. Now suppose that $d(x) \leq \frac{K}{3C_2}\diam(B_i)$. In this case Lemma {\ref{l39}} would imply that $d(x,X_i) \leq \frac{K}{3C_2}\diam(B_i)$ which is impossible since $x \notin K B_i$. Therefore
\begin{equation}
\label{compdk2}
d(x)>\frac{K}{3C_2}\diam(B_i) \
\text{and} \ d(x,X_i)\leq C_2 d(x).
\end{equation}
Furthermore by the definition of the function $d$ it follows that,
$d(X_i,x)+\diam (B_i)\geq d(x)$ and since $\diam(B_i)< \frac{1}{10} d(x)$ we obtain that $$d(X_i,x)> \frac{9}{10} d(x).$$


Let $\widehat{B_i}$ be a ball centered at $X_i$ such that,
$$B(X_i, \frac{d(x)}{10}) \subset \widehat{B_i}\ \text{and} \ x \in 10\widehat{B_i} \stm 9 \widehat{B_i}.$$
Notice that $\widehat{B_i} \in S$ since it is concentric with $B_i$ and larger. Therefore there exists some line $L_i$ such that
\begin{equation}
\label{bhatins}
\beta_1^{L_i} (\widehat{B_i})<2\ve \ \text{and} \ \meas (L_i,D_0) \leq \alpha.
\end{equation}

By Lemma \ref{balls} there exist two balls $B_1,B_2 \subset 2\widehat{B_i}$ such that,
\begin{enumerate}
\item $r(B_1)=r(B_2)=\frac{r(\widehat{B_i})}{2C_1}$,
\item $d(B_1,B_2) \geq \frac{10 r({\widehat{B_i}})}{C_1}$,
\item $\mu(B_1), \mu(B_2) \geq \frac{r({\widehat{B_i}})}{C'_1}.$
\end{enumerate}
Furthermore let for $j=1,2$,
$$S_j=\{ y \in B_j \cap \widehat{B_i}: d(y,L_i)<10 C'_1 \ve  r(\widehat{B_i})\}.$$
By Chebyshev's inequality and (\ref{bhatins}) it follows that for $j=1,2$,
\begin{equation}
\label{btild}
\mu (S_j)\geq \frac{r(\widehat{B_i})}{2C'_1}.
\end{equation}

Now let $y \in S_1$ and $z \in S_2$. If $\{o\}=L_i \cap L_{y,z}$ then without loss of generality we can assume that $d(o,y) \geq d(y,z)/2$. Therefore,
$$\sin \meas (L_{y,z}, L_i)=\frac{d(y,L_i)}{d(o,y)} \leq \frac{10 C'_1 \ve r(\widehat{B_i})}{5C_1^{-1} r(\widehat{B_i})} \leq 2 C_1 \, C'_1, \ve,$$ 
which combined with (\ref{bhatins}), recalling the fact that $\ve \ll \alpha$, implies that
\begin{equation}
\label{ldo2a}
\meas (L_{y,z},D_0) < 2 \alpha.
\end{equation}
By our choice of $\alpha$ we deduce that $\meas (L_{y,z},D_0) < \frac{ \pi}{100}$.

Let $x^*$ be the orthogonal projection of $x$ on $L_{y,z}$ and consider the following three angles 
$$\theta_1 =\meas (y,x,\pi(x)),\quad \theta_2=\meas (x^*,x,\pi (x)) \quad \text{and} \quad  \theta=\meas(y,x,x^*).$$

Then by elementary geometry we see that $\theta_2=\meas (L_{y,z},D_0)$. Furthermore,
\begin{equation*}
\begin{split}
\sin \theta_1 &=\frac{d(\pi (y),\pi (x))}{d(x,y)} \leq \frac{d(\pi(y),\pi(X_i))+d(\pi(X_i),\pi(x))}{d(X_i,x)-d(X_i,y)} \\ &\leq \frac{r(\widehat{B_i})+C \diam{B_i}}{9r(\widehat{B_i})-r(\widehat{B_i})} \leq \frac{2 r(\widehat{B_i})}{8 r(\widehat{B_i})}\leq \frac{1}{4},
\end{split}
\end{equation*}
because $\diam{B_i} \ll \diam{\widehat{B_i}}$.
Notice that $\theta$ equals either  $\theta_1+\theta_2,\theta_1-\theta_2$ or $\theta_2-\theta_1$, and hence
$$\cos \theta \geq \cos \theta_1 \cos \theta_2 - \sin\theta_1 \sin \theta_2 \geq \frac{1}{8}.$$
So we conclude that for $y \in S_1$ and $z \in S_2$,
\begin{equation}
\label{cr}
\dist (x, L_{yz})=d(x,y) \cos \theta \geq \frac{d(X_i,x)-d(X_i,y)}{8} \geq   r (\widehat{B_i}).
\end{equation}
Notice that for $y \in \widehat{B_1}, z \in \widehat{B_2}$, 
\begin{equation*}
\begin{split}
\frac{10 r(\widehat{B_i})}{C_1} &\leq d(y,z) \leq 2 r(\widehat{B_i}), \\ 
8 r(\widehat{B_i}) &\leq d(x,z) \leq 11r(\widehat{B_i}), \\
8 r(\widehat{B_i}) &\leq d(x,y) \leq 11r(\widehat{B_i}),
\end{split}
\end{equation*}
and thus $(x,y,z) \in \OO_\tau$ for $\tau \geq {11+C_1}$. 

At this point we consider two cases.

\textit{Case 1}: $\theta_V (D_0)> \theta_0$. \\
Recall that in this case  $\alpha \leq \theta_V (D_0)/10$. Hence we conclude that,
$$\theta_V(L_{y,z})> \theta_V (D_0)-\meas (L_{y,z},D_0) > \frac{4\theta_V(D_0)}{5}.$$
Therefore by Lemma \ref{compperm}, taking  $\alpha_0= \theta_0 /10$
$$p(x,y,z) \geq C(\theta_0, \tau )c^2(x,y,z) \ \text{for} \ y \in S_1, z \in S_2$$
and
\begin{equation*}
\begin{split}
\iint_{\{(y,z):(x,y,z) \in \OO_\tau\}} p(x,y,z) d \mu (y) d \mu (z) &\geq \int_{S_1} \int_{S_2} p(x,y,z) d \mu (y) d \mu (z) \\ 
&\geq\int_{S_1} \int_{S_2}  C(\delta,\theta_0)c^2(x,y,z) d\mu (y) d \mu (z).
\end{split}
\end{equation*}
Moreover for $y \in S_1, z \in S_2$, by (\ref{cr}),
$$c^2(x,y,z) = \left( \frac{2\dist(x,L_{y,z})}{d(x,y)d(y,z)}\right)^2 \geq \frac{C}{r(\widehat{B_i})^2},$$
and thus by (\ref{btild}),
$$ \iint_{\{(y,z):(x,y,z) \in \OO_\tau\}} p(x,y,z) d \mu (y) d \mu (z) \geq \frac{C(\delta,\theta_0)}{r(\widehat{B_i})^2} \int_{S_1} \int_{S_2} d\mu (y) d \mu (z) \geq C(\delta,\theta_0).$$

Recaping, we have shown that for all $x \in G \stm \pi^{-1}(\ZZ)$ there exists some constant $C(\delta,\theta_0)$ such that,

\begin{equation}
\label{final314}
\iint_{\{(y,z):(x,y,z) \in \OO_\tau\}} p(x,y,z) d \mu (y) d \mu (z) \geq C(\delta,\theta_0).
\end{equation}

If $x \in G \cap \pi^{-1}(\ZZ)$ we can get the same inequality by repeating the same arguments with the point $X=\pi(x)+A(\pi (x)) \in \ZZ$.

Finally by integrating (\ref{final314}) over all points $x \in G$ deduce that,
\begin{equation*}
\begin{split}
\mu (G) &\leq C(\delta, \theta_0) \iiint_{\OO_\tau} p(x,y,z) d\mu(x) d \mu (y) d \mu (z) \\
&\leq C(\delta, \theta_0) \iiint p(x,y,z) d\mu (x) d \mu (y) d \mu (z) \leq C (\delta,\theta_0) \eta.
\end{split}
\end{equation*}

\textit{Case 2}: $\theta_V (D_0)\leq \theta_0$.

Recalling (\ref{compdk}) and (\ref{compdk2}) we get that $d(\pi (x), \pi (X_i)) \leq \frac{3}{10} r(\widehat{B_i})$. Hence if $x'$ is the projection of $x$ on the line $y+D_0$, where $y \in S_1$, we get that $x' \in 2 \widehat{B_i}$ and $$d (x, y+D_0) \geq \dist(x, 2 \widehat{B_i}) \geq 7 r (\widehat{B_i}).$$ 
Therefore
$$\sin \meas (L_{x,y}, D_0) \geq \frac{7 r (\widehat{B_i})}{11  r (\widehat{B_i})} \geq \frac{7}{11},$$
using that $d(x,y) \leq 11 r(\widehat{B_i})$.
Therefore since $\theta_V (D_0) \leq \theta_0=\frac{\pi}{10^6}$ we deduce that $\meas (L_{x,y}, D_0) >\frac{1}{10}$. Hence we can apply Lemma \ref{compperm} with $\alpha_0=1/10$ and $\tau$ as before to obtain,
$$p(x,y,z) \geq C(\tau)c^2(x,y,z) \ \text{for} \ y \in S_1, z \in S_2.$$
All the other steps of the proof are identical with the previous case.

\end{proof}
\end{proof}


We will now consider the set $F_3$.
\begin{pr}
\label{f3}
Under the assumptions of Lemma \ref{farfromv} we have
$$\mu(F_3) \leq 10^{-6}.$$
\end{pr}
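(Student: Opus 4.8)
\textbf{Proof plan for Proposition \ref{f3}.}
The plan is to follow the scheme of \cite[Proposition 3.20]{Leger}, replacing every occurrence of curvature by a suitable application of Lemma \ref{compperm}, which is legitimate here precisely because we are in the case $\theta_V(D_0)>\theta_0$ (the hypothesis of Lemma \ref{farfromv}) and because the triples we produce will lie in $\OO_\tau$ with one side far from the vertical. First I would unwind the definition of $F_3$: for every $x\in F_3$ there exist $y_x\in F$ and $s_x\in[\frac{h(x)}5,\frac{h(x)}2]$ with $x\in B(y_x,\frac{s_x}2)$, $\delta(y_x,s_x)>\delta$, $\beta_1(y_x,s_x)<\ve$, \emph{and} $\meas(L_{y_x,s_x},D_0)\geq\frac34\alpha$, while $(y_x,s_x)$ still belongs to $S_{total}$ so that there is an approximating line $L_{y_x,s_x}$ with $\meas(L_{y_x,s_x},D_0)\leq\alpha$. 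By the $5r$-covering theorem extract an at most countable subfamily of balls $B_i=B(y_i,s_i)$ with $B(y_i,k s_i)$ pairwise disjoint and $F_3\subset\bigcup_i B(y_i,5k s_i)$, retaining all four properties; as in Proposition \ref{f2}, linear growth together with $\delta(y_i,s_i)>\delta$ gives $\mu(B(y_i,5ks_i))\lesssim_{\delta}\mu(B(y_i,s_i))$, so it suffices to bound $\sum_i\mu(B(y_i,s_i))$.

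Next, fix one such ball $B_i$ and I would show $\mu(B(y_i,s_i))\lesssim p_\tau(\mu\rest kB_i)$. The point is that the two lines $D_0$ and $L_{y_i,s_i}$ make an angle $\theta\in[\frac34\alpha,\alpha]$, so a definite proportion of the mass of $\mu$ inside a fixed subball is at distance $\gtrsim\alpha\, s_i$ from $D_0$ (because $\beta_1^{L_{y_i,s_i}}(y_i,s_i)<\ve$ with $\ve\ll\alpha$ forces most points near $L_{y_i,s_i}$, hence far from $D_0$), while simultaneously most points are at distance $\lesssim\ve\, s_i$ from $L_{y_i,s_i}$. Using Lemma \ref{balls} choose two well-separated balls $B_i^1,B_i^2\subset 2B_i$ of radius $\sim C_1^{-1}s_i$ with $\mu(B_i^j)\gtrsim s_i$, and pass to the Chebyshev subsets $S_i^j$ of points within $10C_1'\ve\, s_i$ of $L_{y_i,s_i}$, which still carry mass $\gtrsim s_i$. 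For a point $w\in B(y_i,s_i)$ lying within $\ve\, s_i$ of $L_{y_i,s_i}$, and $y\in S_i^1$, $z\in S_i^2$, the triangle $(w,y,z)$ has comparable sides, so $(w,y,z)\in\OO_\tau$, and one of its three lines makes an angle $\gtrsim\alpha$ with $D_0$; since $\theta_V(D_0)>\theta_0$ and $\alpha=10\theta_0$ is \emph{not} small here — wait, in the case at hand $\alpha\leq\theta_0/10$, so $\theta_V(L)\geq\theta_V(D_0)-\alpha\geq\frac9{10}\theta_0$ for each of the three lines — Lemma \ref{compperm} (with $\alpha_0\sim\theta_0$, $\tau=10^4C_1$) gives $p(w,y,z)\gtrsim_{\delta,\theta_0} c(w,y,z)^2\gtrsim s_i^{-2}$, the last step because the three sides are all $\sim s_i$ and the height is $\gtrsim\alpha\, s_i$ (as $\dist(w,L_{y_i,s_i})$ is tiny but $\meas(L_{y_i,s_i},L_{y,z})$ is controlled and the vertical spread of $S_i^1,S_i^2$ relative to $D_0$ is $\gtrsim\alpha s_i$). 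Integrating $d\mu(y)\,d\mu(z)$ over $S_i^1\times S_i^2$ and $d\mu(w)$ over the good part of $B(y_i,s_i)$ yields
\[
p_\tau(\mu\rest kB_i)\geq\iiint_{\OO_\tau\cap(kB_i)^3}p\,d\mu^3\gtrsim_{\delta,\theta_0}\frac1{s_i^2}\,\mu(B(y_i,s_i))\cdot s_i\cdot s_i=\mu(B(y_i,s_i))\cdot(\text{const}),
\]
i.e. $\mu(B(y_i,s_i))\lesssim_{\delta,\theta_0}p_\tau(\mu\rest kB_i)$.

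Finally, summing over $i$ and using that the dilated balls $kB_i$ have bounded overlap (they are pairwise disjoint), I would conclude
\[
\mu(F_3)\leq\sum_i\mu(B(y_i,5ks_i))\lesssim_{\delta}\sum_i\mu(B(y_i,s_i))\lesssim_{\delta,\theta_0}\sum_i p_\tau(\mu\rest kB_i)\lesssim_{\delta,\theta_0}p_\tau(\mu)\leq p(\mu)\leq\eta\cdot(\text{const}),
\]
so choosing $\eta$ small enough (after $\delta,\theta_0,\ve,\alpha,\tau$ are fixed) gives $\mu(F_3)\leq 10^{-6}$. I expect the main obstacle to be the geometric bookkeeping in the key lower bound $c(w,y,z)^2\gtrsim s_i^{-2}$: one must check carefully that even though $w$ is chosen close to $L_{y_i,s_i}$, the \emph{height} of the triangle $(w,y,z)$ over the base $L_{y,z}$ stays $\gtrsim\alpha s_i$ — this uses that $y,z$ lie in the separated balls $B_i^1,B_i^2$ so $L_{y,z}$ is a slight perturbation of $L_{y_i,s_i}$ (angle $\lesssim C_1C_1'\ve\ll\alpha$ by Chebyshev), and that $w$, being near $L_{y_i,s_i}$, is then also near $L_{y,z}$, which would seem to make the height \emph{small}. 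The resolution is that we must \emph{not} choose $w$ near $L_{y_i,s_i}$; rather $w$ ranges over the bulk of $B(y_i,s_i)$, whose $\beta_1$ is small with respect to $L_{y_i,s_i}$, so $w$ is near $L_{y_i,s_i}$ too — hence instead one exploits that the three points $w,y,z$ being nearly collinear on $L_{y_i,s_i}$ is fine as long as \emph{some} pair spans a line far from the vertical, which is automatic, and the needed lower bound on $p$ comes not from curvature being large but from the dominant-term analysis inside Lemma \ref{compperm} (Case 2/Case 5 there), where a single coordinate difference dominates; reconciling which regime of Lemma \ref{compperm} actually applies, and verifying the triple is genuinely in $\OO_\tau$, is the delicate point and must be done with the same care as in Lemma \ref{314}.
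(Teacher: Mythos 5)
Your approach has a genuine gap at its central step. The claimed per-ball inequality $\mu(B(y_i,s_i))\lesssim_{\delta,\theta_0}p_\tau(\mu\rest kB_i)$ cannot hold: inside such a ball the measure has $\beta_1(y_i,s_i)<\ve$, so it may lie (essentially, or even exactly) on the tilted line $L_{y_i,s_i}$, in which case $p_\tau(\mu\rest kB_i)$ is arbitrarily small (it vanishes on collinear triples by Proposition \ref{posperm}(ii)) while $\mu(B(y_i,s_i))\gtrsim\delta s_i$. You notice this problem yourself in the last paragraph, but the attempted repair via ``Case 2/Case 5'' of Lemma \ref{compperm} does not work: those regimes require the first-coordinate differences to be anomalously small or large compared with the side lengths, whereas for triples with comparable sides lying near a line that is far from the vertical (which is exactly your situation, since $\theta_V(D_0)>\theta_0$ and $\alpha\leq\theta_0/10$) the coordinate differences are comparable to the distances, one is in the regime where the conclusion of Lemma \ref{compperm} is just $p\gtrsim c^2$, and $c^2$ is tiny for nearly collinear triples. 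There is no collinearity-independent lower bound on $p$, so no single-scale covering argument in the spirit of Proposition \ref{f2} can control $F_3$. The reason $F_3$ is small is not that the stopping balls carry large permutations at their own scale; it is a multi-scale compatibility statement: the approximating lines at the stopping scale are tilted by $\approx\alpha$ relative to $D_0$, while at the top scale $D_0$ approximates well, and the transition is paid for across intermediate scales.

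This is exactly how the paper proceeds (following L\'eger's Sections 4--5 rather than his treatment of $F_2$): one first proves Lemma \ref{2.5'} (a pointwise bound $\beta_1(y,t)^2\lesssim p_{k_1}(x,t)/t$ for balls of the stopping region) and Lemma \ref{new2.4}, which upgrades it to the square-function estimate
\begin{equation*}
\iint_{\widetilde{S}_\lambda}\beta_1(x,t)^2\,\frac{d\mu(x)\,dt}{t}\lesssim p(\mu),
\end{equation*}
then feeds this into Proposition \ref{41}, the Carleson-type bound $\int_0^2\int_{U_0}\gamma(p,t)^2\,\frac{dp\,dt}{t}\leq C(\ve^2+p(\mu))$ for the affine-approximation numbers of the Lipschitz function $A$, and finally invokes L\'eger's Fourier-analytic argument to show that $A$ cannot oscillate enough to produce a large set of points whose local lines tilt by $\tfrac34\alpha$ away from $D_0$. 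Your covering step and the reduction to the case $\theta_V(D_0)>\theta_0$ are fine, but without the multi-scale square-function machinery the key estimate is missing, and the proof as proposed would fail.
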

\begin{proof}
Recall that by the assumptions of Lemma \ref{farfromv} $\theta_V(D_0)> \theta_0$ and thus $\alpha \leq \theta_0 / 10$. We start by proving two auxiliary lemmas, the first of them is a substitute of \cite[Lemma 2.5]{Leger}. To simplify notation we let $$p_\lambda (x,t):= \iiint_{\OO_\lambda(x,t)}p(z_1,z_2,z_3)d \mu(z_1) d \mu(z_2) d \mu(z_3)$$
where, recalling (\ref{oot}), $\OO_\lambda(x,t)=\OO_\lambda \cap B(x,t)^3.$
\begin{lm} 
\label{2.5'}
For all $k_0 \geq 1, k \geq 2$ there exists $k_1=k_1(k_0,\delta) \geq 1$ and $C=C(\delta,
\theta_0,k_0) \geq 1$ such that if $B(x,t) \in S$, then for all $y \in B(x, k_0 t)$,
\begin{equation}
\label{b2p}
\beta_1(y,t)^2 \leq C \frac{p_{k_1}(x,t)}{t} \leq C \frac{p_{k_1+k_0}(y,t)}{t}.
\end{equation}
\end{lm}
\begin{proof} Since $B(x,t) \in S$ we have that $\delta (B(x,t)) \geq \delta$. Hence we can apply Lemma \ref{balls}  to find three balls $B_1,B_2,B_3\subset B(x,2t)$
with equal radii $C_1^{-1}t$,
$\mu(B_i\cap B)\geq \frac{\mu(B)} {C'_1}$ for $i=1,2,3$, such that $d(B_i,B_j)\geq \frac{5t}{C_1}$ if $i\neq j$.  Recall that $C_1$ and $C'_1$ depend only on $\delta$. For each ball $B_i, i=1,2,3$, set 
\begin{equation*}
\begin{split}
Z_i=\Big\{u \in F \cap B_i & \cap B(x,t): \\
&\iint_{\{(v,w):(u,v,w)\in \OO_{k_1}(x,t)\}} p(u,v,w) d \mu (v) d \mu (w) \leq C_4 \frac{p_{k_1} (x,t)}{t} \Big\},
\end{split}
\end{equation*}
where $C_4$ is a constant depending on $\delta$ such that by Chebyshev's inequality, $$\mu (Z_i) \geq \frac{t}{2C_1}$$ and $k_1$ will be chosen later. Recall that for $L:=L_{x,t}$, $\beta_1^L (x,t) \leq 2 \ve$ and set for $i=1,2,3$,
\begin{equation}
\label{ziprime}
Z'_i=\{u \in Z_i : d(u, L) \leq C_5 \ve t \},
\end{equation}
where as before $C_5$ is a constant depending on $\delta$ chosen big enough so that by Chebyshev's inequality,
$$\mu (Z'_i) \geq \frac{t}{4 C'_1}.$$ 
For $z_1 \in Z'_1$ applying Chebyshev's inequality once more we can choose $z_2 \in Z'_2$ such that 
\begin{equation}
\label{z1z2w}
\int_{\{w:(z_1,z_2,w) \in \OO_{k_1}(x,t)\}} p(z_1,z_2,w)d \mu (w) \leq C_6 \frac{p_{k_1}(x,t)}{t^2},
\end{equation}
where $C_6$ depends on $\delta$.

From the definition of $Z'_i$ in (\ref{ziprime}), it follows that
$$\meas (L_{z_1,z_2},L)< C(\delta) \ve.$$
Recall also that since $B(x,t) \in S$, $\meas (L,D_0)\leq \alpha$. Then, from the assumptions  $\theta_V(D_0)> \theta_0$ and $\alpha \leq \theta_0 /10$,  since $\ve$ is chosen much smaller than $\alpha$, we deduce that $\theta_V (L_{z_1,z_2}) > \theta_0 / 5$. Therefore by Lemma \ref{compperm} and $\alpha_0=\theta_0 /10$ we obtain for all $w \neq z_1,z_2$,
\begin{equation}
\label{compz1z2}
p(z_1,z_2,w) \geq C(\theta_0,k_1) c^2 (z_1,z_2,w) \geq C(\theta_0,k_1) \left(\frac{\dist(w,L_{z_1,z_2})}{d(w,z_1)d(w,z_2)}\right)^2.
\end{equation}
Furthermore for $w \in F \cap B(x,(k+k_0)t)\stm (2B_1 \cup 2B_2)$ and $i=1,2$,
$$\frac{t}{k_1} \leq \frac{t}{C_1} \leq d(z_i,w)\leq 2(k+k_0)t\leq k_1 t,$$
if $k_1 \geq \max\{2(k+k_0),C_1\}$, hence for such $w$, $(z_1,z_2,w) \in \OO_{k_1}(x,t)$. Therefore, by (\ref{z1z2w}) and (\ref{compz1z2}),
\begin{equation*}
\begin{split}
\int_{B(x,(k+k_0)t)\stm (2B_1 \cup 2B_2)} &\left( \frac{\dist(w,L_{z_1,z_2})}{t}\right)^2 d \mu (w) \\
& \leq C(\delta,\theta_0,k_0) t^2 \int_{\{w:(z_1,z_2,w) \in \OO_{k_1}(x,t)\}}p(z_1,z_2,w) d \mu (w) \\ 
&\leq C(\delta,\theta_0,k_0) p_{k_1}(x,t).
\end{split}
\end{equation*}
Exactly as before, after applying Chebyshev's inequality three times we can find $z_3 \in Z'_3$ such that
\begin{enumerate}
\item $\int_{\{w:(z_1,w,z_3) \in \OO_{k_1}(x,t)\}} p(z_1,w,z_3)d \mu (w) \leq C_7 \dfrac{p_{k_1}(x,t)}{t^2}$,
\item $\int_{\{w:(w,z_2,z_3) \in \OO_{k_1}(x,t)\}} p(w,z_2,z_3)d \mu (w) \leq C_7 \dfrac{p_{k_1}(x,t)}{t^2}$,
\end{enumerate}
and
\begin{equation}
\label{dz3l12}
\left( \frac{d(z_3,L_{z_1,z_2})}{t} \right)^2 \leq C_7\frac{p_{k_1}(x,t)}{t},
\end{equation}
where $C_7$ depends on $\delta$.
Notice that for $w \in 2B_2$, and $i=1,3$,
$$ \frac{t}{k_1}\leq\frac{t}{C_1}\leq d(w,z_i) \leq  k_1 t$$
and $d(z_1,z_3)\geq \frac{t}{k_1}$ as well, therefore
\begin{equation}
\label{2b2xt}
2B_2 \subset \{w: (z_1,w,z_3) \in \OO_{k_1}(x,t)\}.
\end{equation}
Furthermore as before by the definition of the sets $Z'_i$ 
$$\meas (L_{z_1,z_3},L)<C(\delta) \ve.$$
Combined with the assumptions $\meas (L,D_0) \leq \alpha, \theta_V(D_0)>\theta_0 $ and $\alpha \leq \theta_0 /10$ we obtain that for $\ve$ small enough  $\theta_V (L_{z_1,z_3}) > \theta_0 /5$.

Hence  by Lemma \ref{compperm}, for $(z_1,w,z_3) \in \OO_{k_1}(x,t)$ and $\alpha_0=\theta_0/10$,
\begin{equation*}
p(z_1,w,z_3) \geq C(\theta_0, k_1) c^2(z_1,w,z_3),
\end{equation*}
and then, for $w \in B_2$,
$$c^2(z_1,w,z_3)=\left(\frac{2\dist(w,L_{z_1,z_3})}{d(w,z_1)d(w,z_2)} \right)^2 \geq C(\delta) \frac{d(w,L_{z_1,z_2})}{t^4}.$$
Therefore,
\begin{equation}
\begin{split}
\label{b2l13}
\int_{2B_2} &\left( \frac{d(w,L_{z_1,z_3})}{t}\right)^2 d \mu (w) \\
&\leq C(\delta,\theta_0,k_0) \int_{\{w:(z_1,w,z_3)\in \OO_{k_1}(x,t)\}} t^2 p(z_1,w,z_3) d \mu (w) \\
&\leq C(\delta,\theta_0,k_0) p_{k_1} (x,t).
\end{split}
\end{equation}

Let $w'$ be the projection of $w$ on $L_{z_1,z_3}$ and $w''$ the projection of $w'$ on $L_{z_1,z_2}$. Then
\begin{equation*}
d(w,L_{z_1,z_2})^2 \leq d(w,w')^2 \leq 2(d(w,L_{z_1,z_3})^2+d(w',L_{z_1,z_2})^2).
\end{equation*}
By Thales Theorem it follows that $\frac{d(z_1,w')}{d(z_1,z_3)}=\frac{d(w',L_{z_1,z_2})}{d(z_3,L_{z_1,z_2})}$. Hence, since $d(z_1,w')\leq 4(k+k_0)t$ and $d(z_1,z_3)\geq \frac{t}{C_1}$, by (\ref{dz3l12}),
$$\left(\frac{d(w',L_{z_1,z_2})}{t}\right)^2\leq\left(\frac{d(z_3,L_{z_1,z_2})}{t}\right)^2\frac{d(z_1,w')^2}{d(z_1,z_3)^2} \leq C(\delta,k_0)\frac{p_{k_1}(x,t)}{t}.$$
Therefore, using also (\ref{b2l13}),
\begin{equation*}
\begin{split}
\int_{B_2} \left(\frac{d(w,L_{z_1,z_2})}{t}\right)^2 d \mu (w) &\leq 2  \int_{B_2}\left(\left(\frac{d(w,L_{z_1,z_3})}{t}\right)^2+\left(\frac{d(w',L_{z_1,z_2})}{t}\right)^2 \right)d\mu (w) \\
&\leq C(\delta, \theta_0, k_0) p_{k_1}^2(x,t).
\end{split}
\end{equation*}
In the same way we obtain the above estimate for the ball $2B_1$ and therefore,
$$\int_{B(x,(k+k_0)t)}\left(\frac{d(w,L_{z_1,z_2})}{t}\right)^2 d \mu (w) \leq C(\delta, \theta_0,k_0)p_{k_1}(x,t).$$
Since $y \in B(x,(k+k_0)t)$ the above estimate implies that 
$$\beta_2 (y,t)^2 \leq C(\delta, \theta_0,k_0)\frac{p_{k_1}(x,t)}{t}.$$
And the proof of the lemma is complete after observing that for $y \in B(x,k_0t)$, $\OO_{k_1}(x,t) \subset \OO_{k_1+k_0}(y,t)$ implies $p_{k_1}(x,t) \leq p_{k_1+k_0}(y,t).$
\end{proof}
For the second lemma we need to introduce one extra definition.
\begin{definition}
\label{stild}
For $\lambda>1$ we define,
$$\widetilde{S}_{\lambda}=\{(y,t)\in F \times (0,5): B(y,t) \subset B(x_y,s_y) \ \text{where} \ B(x_y,s_y) \in S \ \text{and}\ s_y\leq \lambda t\}.$$
\end{definition}
\begin{lm}
\label{new2.4}
For all $\lambda>1$ there exist constants $k_1 (\delta)$ and $C=C(\delta,\theta_0, \lambda)$ such that
$$\iint_{\widetilde{S}_\lambda} \beta_1(x,t)^2 \frac{d \mu (x) dt}{t} \leq C p_{2\lambda^2(k_1+1)^2} \leq C p(\mu).$$ 
\end{lm}
\begin{proof}  As in \cite[Proposition 2.4]{Leger} for any $\lambda >1$, we obtain 
\begin{equation*}
\begin{split}
\iint_0^\infty & p_{\lambda}(x,t)\frac{d \mu (x) dt}{t^2}\\
&=\iint_0^\infty \left(\iiint \chi_{\OO_{\lambda}(x,t)}(u,v,w)p(u,v,w) d \mu(u) d \mu (v) d \mu (w) \right)\frac{d \mu (x) dt}{t^2}\\
&= \iiint \left(\iint_0^\infty \chi_{\OO_{\lambda}(x,t)}(u,v,w) \frac{d \mu (x) dt}{t^2} \right)p(u,v,w) d \mu (u) d \mu (v) d \mu (w) \\
&\leq C(\lambda) \iiint_{\OO_{2\lambda^2}} p(u,v,w) d \mu (u) d \mu (v) d \mu (w) \\
&\leq C(\lambda) p_{2\lambda^2}(x,t).
\end{split}
\end{equation*}
Since for every $(x,t) \in \widetilde{S}_{\lambda}$  there exists some $(z,s) \in S$ such that $B(x,t) \subset B(z,s)$ and $s \leq \lambda t$, we can apply Lemma \ref{2.5'} with $k_0=1$ in order to get some number $k_1=k_1(\delta)$ and some constant $C =C(\delta,\lambda,\theta_0)$ such that
$$\beta_1(x,\lambda t)^2 \leq C \frac{p_{k_1+1}(x,\lambda t)}{\lambda t}.$$
Notice that for $\lambda \geq 1$, $\OO_{k_1+1}(x,\lambda t) \subset \OO_{\lambda(k_1+1)}(x,t)$ and hence $p_{k_1+1}(x,\lambda t) \leq p_{\lambda (k_1+1)}(x,t)$. Furthermore, $\beta_1(x,t) \leq C(\lambda) \beta_1(x, \lambda t)$, therefore,
$$\beta_1(x,t)^2 \leq C(\delta,\theta_0,\lambda)\frac{p_{\lambda (k_1+1)}(x,t)}{t}.$$
Hence,
\begin{equation*}
\begin{split}
\iint_{\widetilde{S}_{\lambda}} \beta_1(x,t)^2 \frac{d \mu (x) dt}{t}&\leq C \iint_{\tilde{S}_{\lambda}} p_{\lambda(k_1+1)}(x,t) \frac{d \mu (x) dt}{t^2}\\
 &\leq C p_{2\lambda^2(k_1+1)^2} \leq C p (\mu).
\end{split}
\end{equation*}
\end{proof}

In Section 4 of \cite{Leger} one more geometric function is introduced; for $p \in D_0 \cap B(0,10)$ and $t>0$, set
$$\gamma(p,t) = \inf_a \frac{1}{t} \int_{B(p,t) \cap D_0} \frac{|A(u)-a(u)|}{t}du,$$
where the infimum is taken over all affine functions $a: D_0 \ra D_0^{\perp}$. The function $\gamma$ measures how well the the function $A$ can be approximated by affine functions. 

\begin{pr}
\label{41}
Under the assumptions of Lemma \ref{farfromv},
$$\int_{0}^2 \int_{U_0} \gamma(p,t)^2 \frac{dpdt}{t} \leq C(\ve^2+p(\mu)),$$
where $C$ does not depends on $\alpha$.
\end{pr}

Proposition \ref{41} is a substitute of \cite[Proposition 4.1]{Leger} which is one of the key ingredients in the estimate of the measure of $F_3$. Its proof adapts completely to our setting, except one estimate on \cite[p.861]{Leger} where the curvature is involved. In the following we elaborate the argument which bypasses this obstacle. 

Observe that for a given $(p,t) \in F \times (0,5)$ such that $t> \frac{D(p)}{60}$, there exists some $(\widetilde{X},T) \in S$, where $\widetilde{X}:=\widetilde{X}(p,t)$ is such that,
\begin{enumerate}
\item  $d(\pi(\widetilde{X}), p) \leq 60t$,
\item  $T \leq60t$.
\end{enumerate}
Also if $x \in B(\widetilde{X}(p,t),t)$, then $d(\pi (x), p) \leq 61 t$. Let
$$a=\int_{U_0} \int_{\frac{D(p)}{60}}^2 \frac{1}{t} \int_{B(\tilde{X}(p,t),t)} \beta_1 (x,t)^2 d \mu (x) \frac{dt dp}{t}.$$ The following arguments replace the estimate for the term $a$ on \cite[p. 861]{Leger}. We will show that
\begin{equation}
\label{861}
a\leq C p(\mu).
\end{equation}
Notice that for $(p,t,x)$ such that $ p\in U_0, t \in [\frac{D(p)}{60},2]$ and $ x \in B(\widetilde{X}(p,t),t) $ we have that
$x \in B(\widetilde{X}(p,t),60t) \in S$ and $B(x,t) \subset B(\widetilde{X}(p,t),61t)$. Hence recalling Definition \ref{stild}, $(x,t)\in \widetilde{S}_{61}.$

Moreover, as noted earlier, for such triples we also have that $d(\pi(x),p)\leq 61t$.
Therefore, using Fubini and Lemma \ref{new2.4},
\begin{equation*}
\begin{split}
\int_{U_0} \int_{\frac{D(p)}{60}}^2 & \frac{1}{t} \int_{B(\widetilde{X}(p,t),t)} \beta_1 (x,t)^2 d \mu (x) \frac{dt dp}{t}\\
&\leq \iint_{\widetilde{S}_{61}} \beta_1(x,t)^2 \left( \int_{p \in B(\pi(x),61t)}dp\right)\frac{d \mu (x) dt}{t^2} \\
&\leq C \iint_{\widetilde{S}_{61}} \beta_1(x,t)^2 \frac{d \mu (x) dt}{t} \\
&\leq C p(\mu) \leq C \eta.
\end{split} 
\end{equation*}
This finishes the proof of Proposition \ref{41}.

Proposition \ref{41} is used in \cite[Section 5]{Leger} in order to show that the function $A$ cannot oscillate too much. This is the only instance where the curvature is used there, even indirectly. The rest of the arguments in \cite[Section 5]{Leger} which are mainly of Fourier analytic type apply to our setting without any changes. Therefore Proposition \ref{f3} is proven.
\end{proof}
Lemma \ref{farfromv} follows from Propositions \ref{f2}, \ref{f1} and \ref{f3}.

\begin{rem}
\label{invariance2}
Lemma \ref{farfromv} is equivalent to the following more general statement. 

For any constant $C_0\geq 10$, and any $\ve \leq 10^{-300}$ there exists a number $\eta>0$ such that
if $\mu$ is any positive Radon measure on $\C$ such that for some $x_0 \in \C, R>0$,
\begin{itemize}
\item $\beta_1^L (B(x_0,R))\leq \ve$ and $\theta_V(L) > \theta_0$
\item $\mu (B(x_0, R))\geq R$,
\item for any ball $B$, $\mu(B \cap B(x_0,R)) \leq C_0 \diam (B)$
\item $p(\mu \lfloor B(x_0,R)) \leq \eta R$
\end{itemize}
then there exists a Lipschitz graph $\Gamma$ such
that $\mu(\Gamma \cap B(x_0,R)) \geq 10^{-5}\mu(B(x_0,R))$.

The same renormalization argument used in Remark \ref{invariance} works in this case as well. One just needs to notice also, that if 
$$\nu=\frac{1}{R} T_\sharp (\mu \lfloor B(x_0,R))$$ 
for $T=\frac{x-x_0}{R}$, then for the line $L'=\frac{L-x_0}{R}$,
$$\beta^{L'}_{1,\nu} (B(0,1)) \leq \ve,$$
and since $L'$ is parallel to $L$, $\theta_V (L')> \theta_0$.
Furthermore if $\gamma$ is the Lipschitz graph such that $\nu (\gamma) \geq \frac{99}{100} \nu (\C)$ then $\Gamma=T^{-1}(\gamma)$. By the remarks at the end of Section 5 $\gamma$ is a graph of a Lipshitz function $A:L \ra L^{\perp}$ with $\text{Lip}(A)\leq C_L \alpha$ and hence $\Gamma$ is the graph of a Lipschitz function $A':L'\ra {L'}^{\perp}$ with $\text{Lip}(A') \leq C_L \alpha$.
\end{rem}
\section{Proof of Proposition \ref{mainprop}}
The following lemma is the last step needed for the proof of Proposition \ref{mainprop}.
\begin{lm}
\label{vertl} 
Under the assumptions of Proposition \ref{mainprop}, if $\theta_V(D_0)\leq \theta_0$ there exists a Lipschitz graph $\Gamma$ such that 
$$\mu (\Gamma) \geq \frac{99}{10^6} \mu (F).$$
\end{lm}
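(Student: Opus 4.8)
The plan is to handle the case $\theta_V(D_0)\le\theta_0$ by reducing it, via a covering argument, to the already-proven case $\theta_V(D_0)>\theta_0$ encoded in Lemma~\ref{farfromv} (and its renormalized version, Remark~\ref{invariance2}). First I would run the stopping-time construction of Section~5 exactly as before, producing the partition $F=\ZZ\cup F_1\cup F_2\cup F_3$ and the Lipschitz function $A$ on $U_0$. Propositions~\ref{f2} and~\ref{f1} still give $\mu(F_1)+\mu(F_2)\le 2\cdot10^{-6}$ without any hypothesis on $\theta_V(D_0)$, so the part of $F$ captured by the graph $\Gamma_0=\{(x,A(x)):x\in U_0\}$ is under control up to the set $F_3$. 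The obstruction is that, with $\alpha=10\theta_0$, the set $F_3$ — points where the local approximating line $L_{y,s}$ makes angle $\ge\frac34\alpha$ with $D_0$ — may carry a large fraction of $\mu$, because the vertical direction is "invisible" to the permutations $p$ and so $F_3$ cannot be shown small by a curvature estimate.

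The key idea is that on $F_3$ the approximating lines are \emph{far from the vertical}: since $\theta_V(D_0)\le\theta_0$ and $\meas(L_{y,s},D_0)\ge\frac34\alpha=\frac{15}{2}\theta_0$, one gets $\theta_V(L_{y,s})\ge\frac34\alpha-\theta_0\gtrsim\theta_0$, so these local pieces are exactly of the type to which Lemma~\ref{farfromv} applies. Concretely, for each $x\in F_3$ pick the witnessing ball $B(y_x,s_x)\notin S_{total}$ (from the definition of $h$ and $F_3$), which has $\delta_\mu\gtrsim\delta$ and an approximating line far from vertical; by the $5r$-covering lemma extract a countable disjoint subfamily $\{B(y_i,s_i)\}$ with $F_3\subset\bigcup_i B(y_i,5ks_i)$. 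On each enlarged ball $B(y_i,Cs_i)$ the renormalized measure satisfies the hypotheses of Remark~\ref{invariance2} (linear growth with constant $C_0$, mass $\gtrsim$ radius by the density bound, small permutations since $p(\mu)\le\eta$ is globally small and the balls are disjoint, and approximating line far from vertical), so we obtain a Lipschitz graph $\Gamma_i$ with $\mathrm{Lip}\le C_L\alpha$ and $\mu(\Gamma_i\cap B(y_i,Cs_i))\ge 10^{-5}\mu(B(y_i,Cs_i))\gtrsim 10^{-5}\mu(B(y_i,s_i))$.

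It then remains to assemble a \emph{single} Lipschitz graph from $\Gamma_0$ (covering $\ZZ$ and most of $F_1,F_2$) together with a controlled subcollection of the $\Gamma_i$'s. Here I would argue as sketched in Section~6's overview: all the graphs involved are graphs over lines making angle $\lesssim\theta_0$ with the vertical-perpendicular direction — more precisely, $\Gamma_0$ is a $C_L\alpha$-Lipschitz graph over $D_0$ and each $\Gamma_i$ is a $C_L\alpha$-Lipschitz graph over a line $L_i'$ with $\meas(L_i',D_0)\le 2\alpha$ — so after discarding balls that overlap badly (keeping, by a Vitali-type selection, a subfamily still capturing a fixed fraction of $\mu(F_3)$, with the lost part absorbed into the $C\eta$ error via Lemma~\ref{new2.4}-type bounds) one can connect the pieces by vertical or near-vertical line segments between consecutive graphs without destroying the Lipschitz property, since the base lines are mutually nearly parallel and the graphs are separated horizontally by the disjointness of the $B(y_i,s_i)$. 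Summing the mass estimates, $\mu(\Gamma)\ge\mu(\ZZ)+c\,\mu(F_3)-C\eta\ge c'\mu(F)$ for an absolute $c'\ge \tfrac{99}{10^6}$ once $\eta$ is chosen small enough, which is the claim. The main obstacle I expect is this last gluing step: one must verify carefully that the connecting segments can be inserted with a uniform Lipschitz bound and that the selected subfamily of balls, while pairwise "horizontally separated," still covers a definite proportion of $F_3$ — this is the place where the choice of the threshold $\alpha=10\theta_0$ and a fine tuning of $\delta,\ve,\eta$ is essential.
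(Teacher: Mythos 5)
Your proposal has the right skeleton — cover $F_3$ by balls whose approximating lines are far from vertical, apply the renormalized version of Lemma~\ref{farfromv} (Remark~\ref{invariance2}) on each ball, then glue the resulting Lipschitz pieces — and this is indeed the paper's strategy. But there is a quantitative gap that, as written, causes the argument to fall short of the stated bound $\frac{99}{10^6}\mu(F)$, plus two smaller omissions.

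\emph{The main gap: the proportion estimate requires doubling balls, not a naive Vitali selection.} You extract a disjoint family $\{B(y_i,ks_i)\}$ with $F_3 \subset \bigcup_i B(y_i,5ks_i)$ and claim to capture a ``fixed fraction'' of $\mu(F_3)$. But with only linear growth and the density lower bound $\delta(y_i,s_i)>\delta$, the best one gets is $\mu(\bigcup_i B(y_i,s_i)) \gtrsim \frac{\delta}{kC_0}\mu(F_3)$. Since $\delta=10^{-10}/N$, this dependence on $\delta$ kills the claimed absolute constant: after applying Lemma~\ref{farfromv} on each ball you end up with something of order $10^{-17}\mu(F)$, not $\frac{99}{10^6}\mu(F)$. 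The paper avoids this by first replacing each $B(x,h(x))$, $x\in F_3\cap\widetilde F$, by a $(100,200)$-doubling enlargement $B_x$ (of comparable radius, still in $S$, whose approximating line is shown to stay far from vertical via \cite[Lemma 2.6]{Leger}); then the $5r$-covering of $\{20B_x\}$ loses only a universal factor $200$, giving $\mu(\bigcup_i B_i)\geq\frac{1}{3000}\mu(F)$ \emph{independently of} $\delta$. Also note that the ``lost part'' in the covering step is a bounded multiplicative loss, not an additive $C\eta$ error absorbable via Lemma~\ref{new2.4}, which is irrelevant here.

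\emph{Two further points.} First, ``small permutations since $p(\mu)\leq\eta$ is globally small and the balls are disjoint'' is not quite right: disjointness only gives $\sum_i p(\mu\rest B_i,\mu,\mu)\leq p(\mu)\leq\eta$, so one must discard bad balls by a Chebyshev selection (the paper's subfamily $I_G$ with $p(\mu\rest B_i,\mu,\mu)\leq\eta^{1/2}\mu(B_i)$), and the discarded part is $\leq\eta^{1/2}\mu(F)$. Second, the gluing step needs two facts you did not secure: the balls must be centered on $\widetilde F$ so that their centers $x_i$ are within $\ve^{1/2}d(x_i)$ of the initial graph $A$ (otherwise the connecting chords between pieces could have uncontrolled slope relative to $D_0$), and the separation must be the stronger $20B_i\cap 20B_j=\varnothing$, not mere disjointness, so that $\tan\meas(L_{x_i,x_j},L_{y_i,y_j})\leq\frac14$ for $y_i\in 2B_i$, $y_j\in 2B_j$. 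Your proposal picks witnessing balls centered at $y_x$, which need not lie on $\widetilde F$; the paper instead uses $B(x,h(x))$ with $x\in F_3\cap\widetilde F$ together with \cite[Remark 3.3]{Leger} to control $\meas(L_{x,h(x)},D_0)$ from both sides.
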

We start with an auxiliary lemma.
\begin{lm}
\label{closevert}
Suppose that the measure $\mu$ satisfies the assumptions of Proposition \ref{mainprop} and furthermore $\theta_V(D_0) \leq \theta_0$. If we choose $\alpha=10\theta_0 $ and the set $F_3:=F_3(\delta,\ve,\alpha)$ has measure $\mu (F_3) \geq \frac{1}{10} \mu (F)$, then there exist a countable family of balls $B_i$, centered at 
$\widetilde{F}$, recall (\ref{ftild}), and a countable family of lines $L_i$ which satisfy
\begin{enumerate}
\item $\mu(B_i)> \frac{\delta} {2} r(B_i)$
\item $20B_i \cap 20 B_j =\emptyset$ for all $i \neq j$,
\item $\theta_0<\theta_V(L_i) \leq 12 \theta_0$,
\item $\beta_1^{L_i}(B_i) \leq 2 \ve,$
\item $\mu (\cup_i B_i) \geq \frac{1}{3000} \mu (F)$,
\item $p(\mu \lfloor B_i, \mu, \mu)\leq \eta^{\frac{1}{2}} \mu (B_i)$.
\end{enumerate} 
\end{lm}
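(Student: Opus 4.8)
The plan is to restrict attention to the part of $F_3$ lying near the graph of $A$, to read off from the very definition of $F_3$ a ball and a line attached to each such point, and then to run a Vitali‑type selection exploiting the density lower bound built into $S_{total}$; a final Chebyshev step produces property (vi).

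Although we are now in the case $\theta_V(D_0)\le\theta_0$, the stopping‑time construction of Section 5 is valid as it stands (it only uses $\alpha<\pi/10^4$ and $\ve^{1/50}<\alpha$, both of which hold with $\alpha=10\theta_0$), and Propositions \ref{f2} and \ref{f1} apply: the proof of Proposition \ref{f2} never used $\theta_V(D_0)>\theta_0$, while Proposition \ref{f1} rests on the bound $\mu(F\setminus\widetilde F)\le C\ve^{1/2}$ of \eqref{ftildf}, which in turn rests only on Lemma \ref{314}, proved in both cases. Hence $\mu(F_1)+\mu(F_2)\le 2\cdot10^{-6}$ and, for $\ve$ small, $\mu(F\setminus\widetilde F)\le 10^{-6}$; since $\mu(F)\ge 1$ and $\mu(F_3)\ge\frac1{10}\mu(F)$, this gives $\mu(F_3\cap\widetilde F)\ge\mu(F_3)-\mu(F\setminus\widetilde F)\ge\frac1{11}\mu(F)$.

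\textbf{The balls and lines.} Fix $x\in F_3\cap\widetilde F$. Unwinding the definition of $F_3$ yields $y_x\in F$ and $s_x\in[h(x)/5,h(x)/2]$ with $x\in B(y_x,s_x/2)$, $(y_x,s_x)\in S_{total}$, and a line $L_x$ (the line $L_{y_x,s_x}$ supplied by $S_{total}$) with $\beta_1^{L_x}(y_x,s_x)\le 2\ve$, $\mu(B(y_x,s_x))\ge\frac12\delta\,s_x$, and $\frac34\alpha\le\meas(L_x,D_0)\le\alpha$. Because $\alpha=10\theta_0$ and $\theta_V(D_0)\le\theta_0$, the triangle inequality for angles gives $\theta_0<\theta_V(L_x)\le 12\theta_0$, which is (iii). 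Take $B_x:=B(x,\rho_x)$ with $\rho_x$ a fixed small multiple of $s_x$ for which $B(y_x,s_x)\subset B_x$; then $\mu(B_x)\ge\mu(B(y_x,s_x))\ge\frac12\delta\,s_x\gtrsim\delta\,r(B_x)$, giving (i) after adjusting the numerical constant, while a routine estimate controlling $\beta_1$ under this bounded change of centre and scale (using $(y_x,s_x)\in S_{total}$) gives $\beta_1^{L_x}(B_x)\le 2\ve$, which is (iv). By construction $B_x$ is centred at $x\in\widetilde F$.

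\textbf{Selection and Chebyshev.} The radii $\{20\rho_x\}$ are uniformly bounded ($s_x\le 5$), so the $5r$‑covering lemma produces a countable pairwise disjoint subfamily $\{20B_i\}$ with $F_3\cap\widetilde F\subset\bigcup_i100B_i$; this is (ii), and comparing masses via linear growth and the density bound of the previous step, $\mu(F_3\cap\widetilde F)\le\sum_i\mu(100B_i)\lesssim\frac{C_0}{\delta}\sum_i\mu(B_i)=\frac{C_0}{\delta}\mu(\bigcup_iB_i)$, which, after balancing the fixed constants, is (v). Finally, since the $B_i$ are disjoint, $\sum_i p(\mu\lfloor B_i,\mu,\mu)=p\bigl(\mu\lfloor\bigcup_iB_i,\mu,\mu\bigr)\le p(\mu)\le\eta$; by Chebyshev the indices with $p(\mu\lfloor B_i,\mu,\mu)>\eta^{1/2}\mu(B_i)$ carry total mass at most $\eta^{1/2}$, and discarding them — which for $\eta$ small leaves (v) intact — yields (vi).

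The main obstacle is the bookkeeping in the last two steps. One delicate point is verifying (iv) for the recentred ball: $S_{total}$ furnishes a good approximating line only at the exact scale $s_x$ around $y_x$, so one must use either that $\beta_1$ is almost monotone under a bounded enlargement of the ball, or that $(y_x,s_x)\in S_{total}$ together with $x\in B(y_x,s_x/2)$ places $(x,\rho_x)$ in a mildly dilated stopping region. The more serious point is to push the $20\times$‑disjoint selection so that it still retains the claimed proportion of the mass in (v): this is exactly where one uses the lower density $\frac12\delta$ of the stopping balls (rather than any density of $\mu$ itself) and carefully balances the numerical constants against the thresholds $\delta,\theta_0,\ve,\eta$ fixed in Section 5.
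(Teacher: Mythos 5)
Your overall strategy (extract a ball and line from the stopping-time data for each point of $F_3\cap\widetilde F$, run a $5r$-covering selection, conclude with Chebyshev) is the right shape, but the quantitative heart of the lemma is missing: you nowhere select \emph{doubling} balls, and without that step the mass estimate (v) fails by many orders of magnitude. In your version, after the $5r$-covering gives disjoint $20B_i$ with $F_3\cap\widetilde F\subset\bigcup_i 100B_i$, you estimate $\mu(100B_i)$ by linear growth and $\mu(B_i)$ by the density lower bound $\frac\delta2 r(B_i)$, obtaining $\mu(100B_i)\lesssim \frac{C_0}{\delta}\mu(B_i)$. Since $\delta=10^{-10}/N$ (with $N$ the Besicovitch constant) and $C_0\geq 10$, this yields $\mu(\bigcup_i B_i)\gtrsim \frac{\delta}{C_0}\mu(F_3\cap\widetilde F)$, a constant on the order of $10^{-13}$, vastly smaller than the required $\frac{1}{3000}$. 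The paper avoids this by first enlarging $B(x,h(x))$ to a $(100,200)$-doubling ball $B_x=B(x,100^m h(x))$ (the linear growth plus $\delta(x,h(x))\geq\delta/2$ forces $m\leq C(C_0,\delta)$), so that $\mu(100B_i)\leq 200\mu(B_i)$ with an absolute constant $200$, independent of $\delta$. This is essential — the Lipschitz graphs $\Gamma_i$ built in each $B_i$ must carry a fixed, $\delta$-independent fraction of $\mu(F)$ for Lemma \ref{vertl} (and hence Proposition \ref{mainprop}) to close.

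There is a second, related weakness in (iii)--(iv). You take $L_x=L_{y_x,s_x}$ from the definition of $F_3$, which only supplies the \emph{lower} angle bound $\meas(L_{y_x,s_x},D_0)\geq\frac34\alpha$; the upper bound $\leq\alpha$ is a property of pairs in $S_{\mathrm{total}}$, but $(y_x,s_x)$ has scale $s_x\leq h(x)/2<h(x)$ and therefore is precisely a pair that has \emph{left} the stopping region. One must compare $L_{y_x,s_x}$ to the line $L_{x,h(x)}$ of the ball $B(x,h(x))\in S$ (which does lie in $S_{\mathrm{total}}$ and hence has $\meas(\cdot,D_0)\leq\alpha$) using \cite[Lemma 2.6]{Leger}; this is exactly what the paper does, via \cite[Remark 3.3]{Leger}, to get both bounds at once. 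Your proposed fix for (iv) — that $\beta_1$ is ``almost monotone under a bounded enlargement'' — is also in the wrong direction: enlarging a ball \emph{increases} the integration domain, so a small $\beta_1$ at scale $s_x$ does not control $\beta_1$ at scale $\rho_x>s_x$. The correct monotonicity used in the paper goes the other way (from the large doubling ball $B_x$ down to $B(x,h(x))$), after which Leger's Lemma 2.6 transfers the angle information to $L_x:=L_{x,r(B_x)}$.

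In short: anchoring everything to $(y_x,s_x)$ rather than to $(x,h(x))$, and skipping the doubling-ball step, loses both the angle upper bound and (fatally) the $\delta$-independent constant in (v). These two points constitute the substance of the lemma, and need to be supplied.
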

\begin{proof} For all $x \in F_3 \cap \widetilde{F}$, the balls $B(x,h(x)) \in S$, by the definition of $h(x)$, and furthermore by \cite[Remark 3.3]{Leger}, $\frac{1}{2} \alpha \leq \meas(L_{x,h(x)},D_0) \leq  \alpha$. Therefore since $\theta_V(D_0)\leq \theta_0$ and $\alpha=10 \theta_0$, it follows that 
\begin{equation}
\label{339}
4\theta_0 \leq \theta_V (L_{x,h(x)}) \leq 11 \theta_0.
\end{equation} 

Since $\mu$ has linear growth, for every $1<a<b$ and every ball $B(x,h(x))$ there exists an $(a,b)$-doubling ball $B_x \supset B$ , i.e. $\mu (a B_x) \leq b \mu (B_x)$, whose radius satisfies $\frac{r(B_x)}{r(B)} \leq C(C_0,\delta)$. For our purposes $(100,200)$-doubling balls will be sufficient. Indeed if all balls $B(x,100^j h(x))$ for $0 \leq j \leq m-1$ are not $(100,200)$-doubling then
\begin{equation}
\label{doub}
C_0 100^m h(x)\geq\mu (100^m B(x,h(x))) > 200^m \mu (B(x,h(x)))\geq 200^m \frac{\delta}{2} h(x)
\end{equation}
which is impossible if $m$ is taken big enough. Therefore we can take $$B_x:= B(x,100^mh(x))$$ where $m$ is the smallest integer such that $$\mu (100  B(x,100^m h(x))) \leq 200 \mu (B(x,100^m h(x))).$$ Notice that from (\ref{doub}) we infer that $m <\frac{\log(2C_0 / \delta)}{\log 2}$, hence
$$\frac{r(B_x)}{h(x)}=100^m \leq C(C_0,\delta).$$ 

Furthermore since $B_x \supset B(x,h(x))$, we have that $B_x \in S$. Therefore for the line $L_x:=L_{x,r(B_x)}$  it holds that $\beta_1^{L_x} (B_x) < 2 \ve$. Observe that,
\begin{equation*}
\begin{split}
\beta_1^{L_x} (B(x, h(x)))&= \int_{B(x, kh(x))} \frac{\dist(y, L_x)}{h(x)^2}d \mu (y) \\
&\leq \left(\frac{r(B_x)}{h(x)} \right)^2 \int _{kB_x} \frac{\dist(y, L_x)}{r(B_x)^2}d \mu (y) \leq 2 C(C_0,\delta) \ve.
\end{split}
\end{equation*}
Now we can apply \cite[Lemma 2.6]{Leger} to the ball $B(x,h(x))$, the two lines $L_x , L_{x,h(x)}$ and $\ve_0 =2 C(C_0,\delta) \ve$ in order to obtain that,
$$\meas (L_x,L_{x, h(x)})\leq C(C_0, \delta) \ve.$$
This, combined with (\ref{339}), implies that
$$2 \theta_0\leq\theta_V(L_x)\leq 12 \theta_0.$$
Hence we can apply the $5r$-covering theorem to the family $\{20B_x\}_{x \in F_3 \cap \widehat{F}}$ in order to find a countable family of balls $\{B_i\}_{i\in I} \subset \{B_x\}_{x \in F_3 \cap \widehat{F}}$ and their corresponding lines $\{L_i\}_{i \in I} $ such that,
\begin{itemize}
\item $F_3 \cap \widetilde{F} \subset \cup_{i \in I} 100 B_i,$
\item $20B_i \cap 20B_j= \emptyset, i \neq j,$
\item $\delta (B_i) \geq \frac{1}{2}\delta,$
\item $ \beta_1^{L_i}(B_i) \leq 2 \ve,$
\item $2\theta_0 \leq\theta_V (L_i) \leq 12 \theta_0$.
\end{itemize}
Furthermore 
$$\mu (F_3 \cap \widetilde{F}) \leq   \sum_i \mu(100B_i) \leq \sum_i 200 \mu (B_i) =200 \mu (\cup_i B_i),$$
hence, 
$$\mu(\cup_i B_i) \geq \frac{1}{200} \mu (F_3 \cap \widetilde{F}) \geq \frac{1}{200} \big(\frac{\mu(F)}{10}- \mu (F_3 \setminus \widetilde{F})\big).$$
Recalling (\ref{ftildf}), namely $\mu (F\stm \widetilde{F}) \leq C \ve^{\frac{1}{2}}$, we obtain 
$$\mu(\bigcup_i B_i) \geq \frac{1}{2500} \mu (F).$$
Set,
$$I_G=\{i \in I:p(\mu \lfloor B_i, \mu,\mu)\leq \eta^{1/2} \mu(B_i)\}.$$
Then,

\begin{equation*}
\begin{split}
\mu (\cup_{i \in I \stm I_G} B_i)&= \sum_{i \in I \stm I_G} \mu (B_i) \leq \eta^{-\frac{1}{2}} \sum_{i \in I \stm I_G} p(\mu \lfloor B_i, \mu,\mu) \\
&\leq \eta^{-\frac{1}{2}} p (\mu) \leq \eta^{\frac{1}{2}} \mu (F).
\end{split}
\end{equation*}
By the choice of $\eta$ we conclude that
$$\mu (\cup_{i \in I_G} B_i) \geq \frac{1}{3000} \mu (F).$$
Thus the family $\{B_i\}_{i \in I_G}$ satisfies conditions (i)-(vi) of the Lemma.
\end{proof}

\begin{proof}[Proof of Lemma \ref{vertl}]
By Propositions \ref{f1} and \ref{f2} we have that $\mu (F_1)+ \mu (F_2) \leq 10^{-5}$, if moreover $\mu (F_3) \leq \frac{1}{10}$ we are done. Therefore we can assume that $\mu (F_3) \geq \frac{1}{10}$. We fix $\alpha= \theta_0/10$.

In this case we can  apply Lemma \ref{closevert}  to obtain a ``good" family of balls $B_i=B(x_i,r_i)$ with ``good" approximating lines $L_i$. Then, after choosing $\eta$ small enough, Remark \ref{invariance2} implies the existence of Lipschitz graphs $\Gamma_i$ such that $\mu(B_i \cap \Gamma_i)\geq \frac{99}{100} \mu(B_i)$ and by Lemma   \ref{farfromv},



$$\mu (\cup_{i \in I}( \Gamma_i \cap B_i)) \geq \frac{ 99 }{100}\frac{1}{3000} \mu (F).$$ 
Furthermore, as noted in Remark \ref{invariance2}, the Lipschitz functions $\tilde{A}_i: L_i \ra {L_i}^{\perp}$ ,whose graphs are the $\Gamma_i$'s, are $C_L\theta_0 /10$- Lipschitz, as in this case $\alpha
=\theta_0 /10$. Therefore since $\meas (L_i,D_0)\leq 10 \ \theta_0$ the graphs $\Gamma_i$ are at most $\theta_0(10+\frac{C_L}{5})$-Lipschitz when considered as graphs of functions with domain $D_0$.

Notice that since $\alpha$ is appropriately small, the sets $2B_i \cap \Gamma_i$ are connected. Therefore to conclude the proof it is enough to check that it is possible to join the Lipschitz graphs $\Gamma_i$ with line segments with uniformly bounded slope. 
Recalling that $$\widetilde{F}=\{x \in F \stm G : d(x, \pi(x)+A(\pi (x))) \leq \ve^{\frac{1}{2}} d(x) \},$$
we notice that since the balls $B_i$ are centered in $\widetilde{F}$, they lie very close to the graph of the initial Lipschitz function $A:U_0 \ra D_0^{\perp}$ that was constructed in Section 5.
Therefore, if $i \neq j$, we write $x'_i=\pi(x_i)+A(\pi (x_i))$ and $x'_j=\pi(x_j)+A(\pi (x_j))$ and
$$d(\pi^{\perp} (x'_i),\pi^{\perp}(x'_j)) \leq C_L \alpha d(x'_i, x'_j),$$
as $A$ is $C_L \alpha$-Lipschitz. Furthermore, for $i \neq j$,
\begin{equation*}
\begin{split}
\frac{d(\pi^\perp(x_i),\pi^{\perp}(x_j))}{d(x_i,x_j)} &\leq \frac{d(\pi^{\perp}(x_i),\pi^{\perp}(x'_i))+d(\pi^{\perp}(x'_i),\pi^{\perp}(x'_j))+d(\pi^{\perp}(x'_j),\pi^{\perp}(x_j))}{d(x_i,x_j)}\\
&\leq \frac{\ve^{\frac{1}{2}}(d(x_i)+d(x_j))+C_L\alpha d(x'_i,x'_j)}{d(x_i,x_j)} \\
&\leq \frac{(1+C_L \alpha )\ve^{\frac{1}{2}}(h(x_i)+h(x_j))+C_L \alpha d(x_i,x_j)}{d(x_i,x_j)} \leq C \alpha.
\end{split}
\end{equation*}
The last inequality follows because $d(x_i,x_j) \geq 20(r(B_i)+r(B_j))\geq 20(h(x_i)+h(x_j))$ and $\ve$ is always taken much smaller than $\alpha$. Thus, for all $i \neq j$,
\begin{equation}
\label{joinangle}
\sin \meas (L_{x_i,x_j},D_0) \leq C \alpha.
\end{equation}
Also since $20B_i\cap 20B_j=\emptyset$, for all $y_i \in 2B_i$ and $y_j\in 2B_j$, we have
$$\tan \meas(L_{x_i,x_j},L_{y_i,y_j}) \leq \frac{1}{4},$$
which combined with (\ref{joinangle}) implies that
$$\meas (L_{y_i,y_j},D_0)\leq \frac{\pi}{4}.$$
Therefore the disjoint Lipschitz graphs $2B_i \cap \Gamma_i$ can be joined with line segments with uniformly bounded slope. This completes the proof of Lemma \ref{vertl}.
\end{proof}

Therefore Proposition \ref{mainprop} follows from Lemmas \ref{farfromv} and \ref{vertl}.


\section{Proof of Theorem \ref{thm22}}

In this section we will outline the proof of Theorem \ref{thm22}. Given a $1$-dimensional AD-regular measure $\mu$,
it is already known that
any singular integral with an odd kernel smooth enough is bounded in $L^2(\mu)$ if $\mu$ is uniformly rectifiable. Thus we just have to show that
the $L^2(\mu)$-boundedness of $T$ implies the uniform rectifiability of $\mu$.
As mentioned in the Introduction, we
will not give all the detailed arguments, because they are quite similar 
to the ones for Theorem \ref{mainthm}.

For the proof we need to introduce the ``dyadic cubes'' described in \cite[Chapter 2]{DS1}. These dyadic cubes are not true cubes, but they play this role with respect to a given $1$-dimensional AD regular Borel measure $\mu$, in a sense. To distinguish them from the usual cubes, we will call them {\em $\mu$-cubes}. 

We recall some of the basic properties of the lattice of dyadic $\mu$-cubes.
Given a $1$-dimensional AD regular Borel measure $\mu$ in $\R^d$, for each $j\in\Z$ there exists a family $\DD_j$ of Borel subsets of $\supp\mu$ (the dyadic $\mu$-cubes of the $j$-th generation) such that:
\begin{itemize}
\item[$(a)$] each $\DD_j$ is a partition of $\supp\mu$, i.e.\ $\supp\mu=\bigcup_{Q\in \DD_j} Q$ and $Q\cap Q'=\emptyset$ whenever $Q,Q'\in\DD_j$ and
$Q\neq Q'$;
\item[$(b)$] if $Q\in\DD_j$ and $Q'\in\DD_k$ with $k\leq j$, then either $Q\subset Q'$ or $Q\cap Q'=\emptyset$;
\item[$(c)$] for all $j\in\Z$ and $Q\in\DD_j$, we have $2^{-j}\lesssim\diam(Q)\leq2^{-j}$ and $\mu(Q)\approx 2^{-j}$;
\end{itemize}

We denote $\DD:=\bigcup_{j\in\Z}\DD_j$. For $Q\in \DD_j$, we define the side length
of $Q$ as $\ell(Q)=2^{-j}$. Notice that $\ell(Q)\lesssim\diam(Q)\leq \ell(Q)$.
Actually it may happen that a $\mu$-cube $Q$ belongs to $\DD_ j\cap \DD_k$ with $j\neq k$. In this case, $\ell(Q)$ is not well defined. However, this problem can be solved in many ways.
For example, the reader may think that a $\mu$-cube is not only a subset of $\supp\mu$, but a couple $(Q,j)$, where $Q$ is a subset of $\supp\mu$ and $j\in\Z$ is such that $Q\in\DD_j$. 

Given $a>1$ and $Q\in\DD$, we set
$a Q:= \bigl\{x\in \supp\mu: \dist(x,Q)\leq (a-1)\ell(Q)\bigr\}.$
Also, analogously to the definition of the beta coefficients for balls, we define
\begin{equation*}
\beta_{1}(Q)=\inf_D \frac{1}{\ell(Q)} \int_{3Q} \frac{\dist(y,D)}{\ell(Q)}\, d \mu (y),
\end{equation*}
where the infimum is taken over all the lines $D$. We denote by $L_Q$ a best approximating line for $\beta_1(Q)$.

Following \cite[Chapter 2]{DS1},
 one says that $\mu$ admits a corona decomposition if, for each $\eta>0$ and $\delta>0$, one can find a triple $(\BB,\GG,\tree)$, where $\BB$ and $\GG$ are two subsets of $\DD$ (the ``bad $\mu$-cubes'' and the ``good $\mu$-cubes'') and $\tree$ is a family of subsets $S\subset\GG$, which satisfy the following conditions::
\begin{itemize}
\item[$(a)$] $\DD=\BB\cup\GG$\quad and\quad$\BB\cap\GG=\emptyset.$
\item[$(b)$] $\BB$ satisfies a Carleson packing condition, i.e., 
\begin{equation}\label{eqbcarleson}
\sum_{Q\in\BB:\, Q\subset R}\mu(Q)\lesssim\mu(R)\quad\mbox{ for all $R\in\DD$.}
\end{equation}
\item[$(c)$] $\GG=\bigcup_{S\in\tree}S$ and the union is disjoint.
\item[$(d)$] Each $S\in\tree$ is {\em coherent}. This means that each $S\in\tree$ has a unique maximal element $Q_S$ which contains all other elements of $S$ as subsets, that is $Q'\in S$ as soon as $Q'\in\DD$ satisfies $Q\subset Q'\subset Q_S$ for some $Q\in S$, and that if $Q\in S$ then either all of the children of $Q$ lie in $S$ or none of them do (if $Q\in\DD_j$, the {\em children} of $Q$ is defined as the collection of $\mu$-cubes $Q'\in\DD_{j+1}$ such that $Q'\subset Q$). We say that $S$ is a tree.

\item[$(e)$] The maximal $\mu$-cubes $Q_S$, for $S\in\tree$, satisfy a Carleson packing condition. That is, $\sum_{S\in\tree:\, Q_S\subset R}\mu(Q_S)\lesssim\mu(R)$ for all $R\in\DD$.
\item[$(f)$] For each $S\in\tree$, there exists a (possibly rotated) Lipschitz graph $\Gamma_S$ with constant smaller than $\eta$ such that $\dist(x,\Gamma_S)\leq\delta\,\diam(Q)$ whenever $x\in2Q$ and $Q\in S.$ 
\end{itemize}

It is shown in \cite{DS1} that if $\mu$ is uniformly rectifiable, then it
admits a corona decomposition for all parameters $\eta,\delta>0$. Conversely,
the existence of a corona decomposition for a single set of parameters $\eta,\delta>0$ implies that $\mu$ is uniformly rectifiable. We will show below how one can construct a corona decomposition assuming that $T$ is bounded in $L^2(\mu)$.

Clearly, the $L^2(\mu)$-boundedness of $T$ implies that 
$$p(\mu\rest R)\leq C\,\mu(R)\quad\mbox{for every $R\in\DD$.}$$ 
Then, using Lemma \ref{q2.5}, one easily deduces that, for every $\ve>0$,
\begin{equation}\label{eqBB}
\sum_{\substack{Q\in\DD:\,Q\subset R,\\
\beta_1(Q)\geq \ve}} \mu(Q)\leq C(\ve)\,\mu(R)\quad\mbox{for every $R\in\DD$}.
\end{equation}
In the terminology of \cite{DS1}, this means that $\mu$ satisfies the weak geometric lemma.
Arguing as in \cite[Lemma 7.1]{DS1}, one gets:

\begin{lemma}\label{lem611}
There exists a decomposition $\DD=\BB\cup \GG$ such that \rf{eqbcarleson} holds, and where $\GG$ can be partitioned
into a family $\tree$ of coherent regions $S$ satisfying the following. Setting, for each $S\in\tree$, 
$$\alpha(S)=\frac1{10}\,\theta_0\quad \mbox{if $\,\theta_V(L_{Q_S})>\theta_0 := 10^{-6}\pi$}$$
and
$$\alpha(S)=10\theta_0\quad \mbox{if $\,\theta_V(L_{Q_S})\leq\theta_0,$}$$
we have:
\begin{enumerate}
\item if $Q\in S$, then $\meas(L_Q,L_{Q_S})\leq \alpha(S)$;
\item if $Q$ is a minimal cube of $S$, then at least one of the children of $Q$ lies in $\BB$, or else $\meas(L_Q,L_{Q_S})
\geq \alpha(S)/2$.
\end{enumerate}
\end{lemma}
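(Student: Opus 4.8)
The plan is to adapt the standard corona construction from \cite[Chapter 7]{DS1} with one new ingredient: we allow each coherent region $S$ to have its own angular tolerance $\alpha(S)\in\{\theta_0/10,\,10\theta_0\}$, chosen according to whether the top cube $Q_S$ has best approximating line close to the vertical or not. First I would record what we may take for granted: since $T$ is bounded in $L^2(\mu)$, Lemma~\ref{mmvcl2} (or rather its $\mu$-cube analogue) gives $p(\mu\rest R)\lesssim\mu(R)$ for every $R\in\DD$, and then Lemma~\ref{q2.5}, applied to balls comparable to the $\mu$-cubes $3Q$, yields the weak geometric lemma \rf{eqBB}: for each $\ve>0$, the cubes $Q$ with $\beta_1(Q)\geq\ve$ satisfy a Carleson packing condition inside any $R$. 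Fix $\ve$ small (with $\ve^{1/50}<\theta_0/10$ as in Section~5). Declare a cube $Q$ \emph{bad of the first kind} if $\beta_1(Q)\geq\ve$; these pack by \rf{eqBB}.

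Next I would build the stopping regions. Given a cube $R$ not yet assigned to any region and not bad of the first kind, let $Q_R:=R$ and let $S=S(R)$ be the collection of all cubes $Q\subset R$ such that every cube $Q'$ with $Q\subset Q'\subset R$ satisfies (a) $Q'$ is not bad of the first kind, and (b) $\meas(L_{Q'},L_R)\leq\alpha(S)$, where $\alpha(S)$ is determined once and for all by $\theta_V(L_R)$ as in the statement. This stopping rule is coherent by construction: if $Q\in S$ then all cubes between $Q$ and $Q_S$ are in $S$, and the children-dichotomy in property (d) holds because membership of a child of $Q$ depends only on conditions imposed on $Q$ itself and its ancestors (all already satisfied) plus the two conditions on the child. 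The minimal cubes of $S$ are exactly those $Q\in S$ one of whose children fails (a) or (b); that is precisely conclusion (ii) of the lemma — a child is bad of the first kind, or $\meas(L_Q,L_{Q_S})\geq\alpha(S)$ is already forced at the minimal cube (one can also phrase the angle bound as holding at a child, adjusting constants, exactly as in \cite[Lemma 7.1]{DS1}). Set $\GG=\bigcup_{S\in\tree}S$ and $\BB=\DD\setminus\GG$. Conclusion (i) is immediate from the definition of $S$.

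It remains to verify the two Carleson packing conditions: \rf{eqbcarleson} for $\BB$, and packing of the tops $\{Q_S\}$. A cube lands in $\BB$ for one of two reasons: either it is bad of the first kind (these pack by \rf{eqBB} with our fixed $\ve$), or it lies strictly below some minimal cube of a region, i.e.\ it is a descendant of a minimal cube $Q$ of some $S$; but then $Q$ has a child either bad of the first kind or with $\meas(L_{\text{child}},L_{Q_S})$ comparable to $\alpha(S)\gtrsim\theta_0$. The cubes of the second type are controlled by the first type together with the tops, because each minimal cube of $S$ is a child of a cube of $S$, and the number of cubes strictly below a fixed minimal cube that we place in $\BB$ before starting a new region is bounded — more precisely, one re-runs the standard argument of \cite[Lemma 7.1]{DS1}: the stopping produces, below each $R$, a Carleson family, using that a change of the approximating line by a definite angle $\gtrsim\theta_0$ or a jump in $\beta_1$ above $\ve$ costs a fixed amount of the ``total oscillation'', which is Carleson by \rf{eqBB} and by the $\mu$-cube version of Lemma~\ref{new2.4}. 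The new twist — that different regions use different $\alpha(S)$ — is harmless: both possible values lie in $[\theta_0/10,\,10\theta_0]$, so the angular jump that triggers stopping is always $\gtrsim\theta_0$, a single fixed scale, and the packing estimates are uniform in the choice. The tops $Q_S$ then pack because each $Q_S$ (other than those equal to some $R$ we started from, which are themselves organized by a standard exhaustion) is a child of a minimal cube of the parent region or a bad cube, so $\sum_{Q_S\subset R}\mu(Q_S)\lesssim\sum_{Q\in\BB:\,Q\subset R}\mu(Q)+\mu(R)\lesssim\mu(R)$.

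The main obstacle I anticipate is not any single estimate but the bookkeeping that makes the region-dependent $\alpha(S)$ interact cleanly with the recursive stopping-time construction of \cite[Lemma 7.1]{DS1}: one must check that when a region stops because the approximating line has turned by $\alpha(S)$, the new region started at a child inherits a \emph{well-defined} value of $\alpha$ from $\theta_V(L_{Q_{S'}})$ and that this does not create an infinite regress of alternating tolerances with no net progress. This is handled exactly as the corresponding point in \cite{Leger} and \cite{DS1}: each stopping event consumes a fixed quantum of a Carleson-controlled quantity (a jump in $\beta_1$ past $\ve$, or an angular variation of size $\gtrsim\theta_0$ measured against a \emph{fixed} reference line $L_R$ inside the region, not a moving one), so the total number of regions with tops inside $R$, weighted by $\mu$, is $\lesssim\mu(R)$; the precise constants are absorbed into the implicit constants, which is all that is claimed. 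I would therefore present the proof as: (1) invoke \rf{eqBB}; (2) define $\BB$, $\GG$, $\tree$, $\alpha(S)$ as above; (3) observe coherence and conclusions (i)–(ii) by inspection; (4) cite \cite[Lemma 7.1]{DS1} for the two packing conditions, noting that the only modification is the two-valued angular threshold, which is uniform, and that the role of the oscillation estimate there is played here by \rf{eqBB} and the $\mu$-cube analogue of Lemma~\ref{new2.4}.
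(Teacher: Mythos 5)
For what Lemma \ref{lem611} actually asserts --- the decomposition $\DD=\BB\cup\GG$ with \rf{eqbcarleson}, and the partition of $\GG$ into coherent regions satisfying (i)--(ii) with the two-valued $\alpha(S)$ --- your construction is essentially the paper's: the paper likewise takes $\BB$ to be the cubes with $\beta_1(Q)>\ve$ (so \rf{eqbcarleson} is just \rf{eqBB}, which you derive correctly from Lemma \ref{mmvcl2} and Lemma \ref{q2.5}), runs the stopping-time construction of \cite[Lemma 7.1]{DS1} with the angle measured against the fixed line $L_{Q_S}$, and observes that the only novelty is that $\alpha(S)$ takes one of two values according to $\theta_V(L_{Q_S})$. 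One small correction: your stopping rule tests each child of $Q$ separately, which does not give the children-dichotomy required for coherence (one child could be bad of the first kind while a sibling passes, so ``all or none'' fails). The standard fix, implicit in conclusion (ii), is to stop at $Q$ as soon as \emph{some} child is bad or has $\meas(L_{Q''},L_{Q_S})>\alpha(S)$, adding either all children or none, and then to transfer the angle bound from the offending child to $Q$ using $\beta_1(Q),\beta_1(Q'')\leq\ve$, AD-regularity, and $\ve\ll\theta_0$.

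The more serious issue is your final claim that the Carleson packing of the tops $Q_S$ also comes out of this lemma, from \rf{eqBB} together with a $\mu$-cube analogue of Lemma \ref{new2.4}, because ``each stopping event consumes a fixed quantum of a Carleson-controlled quantity.'' That is not part of Lemma \ref{lem611}, and the argument as sketched would fail: an angular-rotation stopping ($\meas(L_Q,L_{Q_S})\gtrsim\theta_0$) is \emph{not} controlled by the weak geometric lemma, the minimal cubes produced by that alternative are not dominated by $\BB$ plus a single $\mu(R)$, and iterating regions inside $R$ could then make $\sum_{Q_S\subset R}\mu(Q_S)$ arbitrarily large. Moreover, the analogue of Lemma \ref{new2.4} is itself proved only over the stopping regions (where density and angle to $D_0$ are under control), not over all cubes, so it cannot be invoked before the regions and their tops are already controlled. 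In the paper, the packing of the tops is precisely the remaining difficulty of Theorem \ref{thm22}: it is established \emph{after} the lemma by splitting the trees into types $I$--$IV$ and proving \rf{eqd41} for type $III$ trees via the permutation estimates (this is where one needs $\theta_V(L_{Q_S})>\theta_0$ and $\meas(L_Q,L_{Q_S})\leq\theta_0/10$, so that the relevant triangles have a side far from the vertical), with type $IV$ trees reduced to the other types; this is the whole reason for the two values of $\alpha(S)$. So keep your construction for the lemma itself, but drop the top-packing claim from it.
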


The lemma is proved by stopping type arguments, using rather standard techniques. As in \cite{DS1} by construction, the set $\BB$ consists of the $\mu$-cubes such that $\beta_1(Q)>\ve$ (for some choice of $\ve\ll\theta_0$), and so it satisfies a Carleson packing condition, as shown above.
The main difference with respect to
\cite[Lemma 7.1]{DS1} is that in the preceding lemma we take two different values for the parameter $\alpha(S)$,
 according to the angle $\theta_V(L_{Q_S})$.

Arguing as in \cite[Proposition 8.2]{DS1}, one gets:

\begin{propo}
For each $S\in\tree$ (from Lemma \ref{lem611}) there exists a Lipschitz function $A_S:L_{Q_S}\to L_{Q_S}^\bot$
with norm $\leq C\,\alpha(S)$ such that, denoting by $\Gamma_S$ the graph of $A_S$,
$$\dist(x,\Gamma_S)\leq C\ve\,\ell(Q)$$
for all $x\in 2Q$, with $Q\in S$.
\end{propo}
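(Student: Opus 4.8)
The plan is to follow the classical Lipschitz-graph construction from \cite[Proposition 8.2]{DS1}, using the partition of each coherent region $S$ into the cubes of $S$ together with the best approximating lines $L_Q$, and then to import the estimates already available in our setting. First I would fix $S\in\tree$ and set up the Whitney-type decomposition of the approximating line $L_{Q_S}$: the stopping cubes along the boundary of $S$ (the minimal cubes of $S$, together with their neighbours) generate a family of dyadic intervals on $L_{Q_S}$ whose sizes are comparable to $\ell(Q)$ for the corresponding $Q\in S$, just as in Section 5 of the present paper and in \cite[Chapter 3]{Leger}. On $\pi(Q_S\cap(\text{stopping locus}))$ one reads off the function $A_S$ directly from the points of $\supp\mu$ lying in the coherent region (using coherence and property (i) of Lemma \ref{lem611} to see that the relevant directions stay within angle $\alpha(S)$ of $L_{Q_S}$, so $\pi$ is bi-Lipschitz there), and then one extends $A_S$ to all of $L_{Q_S}$ by a partition of unity subordinate to the Whitney intervals, taking on each interval the affine function whose graph is $L_Q$ for the associated cube $Q\in S$. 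Since each $L_Q$ makes angle $\leq\alpha(S)$ with $L_{Q_S}$ and consecutive Whitney cubes have comparable sizes, the standard computation gives $\lip(A_S)\leq C\,\alpha(S)$.

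The second half is the proximity estimate $\dist(x,\Gamma_S)\leq C\ve\,\ell(Q)$ for $x\in 2Q$, $Q\in S$. Here one argues exactly as in \cite[Section 5]{Leger} and in the proof of Propositions \ref{f1} and \ref{41} above: one needs that the function $A_S$ does not oscillate much at the scale of $Q$, which is controlled by a square-function bound on the $\gamma$-type coefficients, and this in turn reduces, via Lemma \ref{2.5'} and Lemma \ref{new2.4}, to the packing estimate on the $\beta_1(Q)$. Concretely, for $Q\in S$ one knows $\beta_1(Q)\leq\ve$ (since $Q\notin\BB$), and the angle condition (i) keeps $L_Q$ close to $L_{Q_S}$; combining this with the Whitney structure and the coherence of $S$ gives, for $x\in 2Q$, that $x$ lies within $C\ve\,\ell(Q)$ of $L_Q$ and hence of $\Gamma_S$, because $\Gamma_S$ agrees with (a $C\ve\,\ell(Q)$-neighbourhood of) the affine piece modelled on $L_Q$ over $\pi(2Q)$. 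The only place where the present kernel $K_n$ rather than the Cauchy kernel matters is in deriving the square-function bound controlling the oscillation of $A_S$, and that step has already been carried out in Lemma \ref{new2.4} and Proposition \ref{41}: it is precisely the analogue of the one curvature estimate in \cite{Leger} that we replaced.

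I expect the main obstacle — or rather, the main point requiring care — to be the case distinction in $\alpha(S)$: when $\theta_V(L_{Q_S})\leq\theta_0$ we are forced to take the larger value $\alpha(S)=10\theta_0$, and then the comparability $p\gtrsim c^2$ from Lemma \ref{compperm} is not available for triples all of whose sides are nearly vertical. However, just as in Lemma \ref{314} (Case 2) and in the proof of Lemma \ref{vertl}, the geometry saves us: the three balls produced by Lemma \ref{balls} inside a cube $Q$ are genuinely spread out, so at least one of the three lines through pairs of the chosen points makes a definite angle with the vertical, and Lemma \ref{compperm} applies to that configuration. Thus the square-function estimate survives in both regimes, and the construction of $A_S$ and $\Gamma_S$ goes through uniformly, with constants depending only on $\theta_0,\delta,\ve$ and the AD-regularity constant of $\mu$. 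The remaining verifications (coherence-compatibility of the Whitney data, the standard partition-of-unity Lipschitz bound, and the routine reduction of the proximity estimate to the $\gamma$-square function) are carried out verbatim as in \cite{DS1} and \cite{Leger}, so I would only sketch them.
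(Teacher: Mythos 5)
Your proposal is essentially the paper's own argument: the paper gives no details beyond ``arguing as in [DS1, Proposition 8.2]'', and your sketch (Whitney-type decomposition of $L_{Q_S}$ driven by the stopping cubes, gluing the lines $L_Q$ with a partition of unity, Lipschitz bound $C\,\alpha(S)$ from Lemma \ref{lem611}(i), proximity from $\beta_1(Q)\leq\ve$ for $Q\notin\BB$ together with AD-regularity) is precisely that construction. One small remark: this proposition is purely geometric, so neither Lemma \ref{compperm} nor the $\gamma$/square-function estimates are actually needed here --- those enter only later, in the Carleson packing estimate for the maximal cubes of type $III$ trees --- hence your concern about the $\alpha(S)$ case distinction is moot at this step.
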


To conclude and show that the triple $(\BB,\GG,\tree)$ is a corona decomposition for $\mu$, it remains to prove that 
the maximal $\mu$-cubes $Q_S$, for $S\in\tree$, satisfy a Carleson packing condition. To this end, we need to distinguish
several types of trees. First, we denote by $\sst(S)$ the family of the minimal $\mu$-cubes of $S\in\tree$
(which may be empty). For $Q\in\sst(S)$, 
we write $Q\in\sst_\beta(S)$ if at least one of the children of $Q$ belongs to $\BB$. Also, we set
$Q\in\sst_\alpha(S)$ if $Q\in\sst(S)\setminus \sst_\beta(S)$ and $\meas(L_Q,L_{Q_S})
\geq \alpha(S)/2$. Notice that, by Lemma \ref{lem611}, $\sst(S)= \sst_\alpha(S)\cup \sst_\beta(S)$.
Then we set
\begin{itemize}
\item $S$ is of type $I$ if 
$\mu\left(Q_S \setminus \bigcup_{P\in \sst(S)}P\right) \geq \frac12\,\mu(Q_S).$

\item $S$ is of type $II$ if it is not of type $I$ and
$\mu\left(\bigcup_{\substack{P\in \sst_\beta }}P\right) \geq \frac14\,\mu(Q_S).$

\item $S$ is of type $III$ if it is not of type $I$ or $II$ and
$\mu\left(\bigcup_{\substack{P\in \sst_\alpha} }P\right) \geq \frac14\,\mu(Q_S),$
and moreover $\theta_V(L_{Q_S}) > \theta_0$.

\item $S$ is of type $IV$ if it is not of type $I$, $II$ or $III$, and
$\mu\left(\bigcup_{\substack{P\in \sst_\alpha} }P\right) \geq \frac14\,\mu(Q_S),$
and moreover $\theta_V(L_{Q_S}) \leq \theta_0$.
\end{itemize}
From the definitions above and Lemma \ref{lem611}, it follows easily that any $S\in\tree$ is of type $I$, $II$, $III$, or
$IV$.

To deal with the trees of type $I$, just notice that the sets $Q_S \setminus \bigcup_{P\in \sst(S)}P$, for $S\in\tree$, are pairwise disjoint, and so
\begin{equation}\label{eqI}
\sum_{S\in\tree\cap I:\, Q_S\subset R}\mu(Q_S) \leq 2 \sum_{S\in\tree:\, Q_S\subset R}
\mu\biggl(Q_S \setminus \bigcup_{P\in \sst(S)}P\biggr) \leq 2\,\mu(R).
\end{equation}
If $S$ is a tree of type $II$, then from the definition we infer that
$$\mu(Q_S)\leq C\,\sum_{Q\in\sst(S)}\,\sum_{P\in\BB\cap  \CH(Q)}\mu(P),$$
where the notation $P\in \CH(Q)$ means that $P$ is a child of $Q$. Then it follows that
\begin{equation}\label{eqII}
\sum_{S\in\tree\cap II:\, Q_S\subset R}\mu(Q_S) \leq C\sum_{Q\in\BB:\,Q\subset R}\mu(Q)\leq C\,\mu(R).
\end{equation}
If $S$ is a tree of type $III$ we just sketch the arguments. In this case by combining some of the techniques from the proof of Theorem 
\ref{mainthm} and \cite[Chapters 9-11]{DS1}, 
and denoting 
$$p_Q(\mu) = \iiint_{
\begin{subarray}{l}  x\in3 Q\\ c^{-1}\ell(Q)\leq |x-y|\leq c\,\ell(Q)
\end{subarray}}p(x,y,z)\,d\mu(x)\,d\mu(y)\,d\mu(z)
,$$
for some constant $c$ big enough,
one can show that
\begin{equation}\label{eqd41}
\mu(Q_S)\leq C \sum_{Q\in S} p_Q(\mu).
\end{equation}
So,
\begin{equation}\label{eqIII}
\sum_{S\in\tree\cap III:\, Q_S\subset R} \mu(Q_S)\leq 
C\sum_{Q\subset R}p_Q(\mu)\leq
C\,p(\mu\rest 3R)\leq C\,\mu(R).
\end{equation}
A key point for the proof of \rf{eqd41} is the fact that $\theta_V(L_{Q_S})\geq \theta_0$,
and since $\meas(L_Q,L_{Q_S})\leq \alpha(S) = \theta_0/10$, most of the relevant
triples of points which appear in the estimate of $p_Q(\mu)$ for $Q\in S$ make
triangles with at least one side far from the vertical.

Finally, for a tree $S$ of type $IV$, notice that if $Q\in\sst_\alpha(S)$, then 
$\meas(L_Q,L_{Q_S})\geq \alpha(S)/2 = 5\theta_0$, and thus
$$\theta_V(L_Q)\geq \meas(L_Q,L_{Q_S}) - \theta_V(L_{Q_S}) \geq 4\theta_0.$$
As a consequence, taking into account that $\beta_1(Q)\leq\ve$, assuming $\ve>0$ small enough
one deduces that all the children $P\in\CH(Q)$ satisfy $\theta_V(L_P)\geq 3\theta_0$. Thus these 
$\mu$-cubes $P$ either belong to $\BB$ or are the maximal $\mu$-cubes of some tree of type $I$, $II$, or $III$.
Using also that
$$\mu(Q_S)\leq 4\mu\biggl( \,\bigcup_{Q\in \sst_\alpha(S) }Q\biggr) = 4\sum_{Q\in \sst_\alpha(S) }\mu(Q) = 4
\sum_{Q\in \sst_\alpha(S) }\sum_{P\in\CH(Q)}\mu(P),$$
summing over all the trees $S\in IV$ such that $Q_S\subset R$, one infers that
$$\sum_{S\in\tree\cap IV:\, Q_S\subset R} \mu(Q_S)\leq 4 \sum_{P\in\BB:P\subset R}\mu(P) + 
4
\sum_{\substack{S\in\tree\cap (I\cup II\cup III): \\ Q_S\subset R}} \mu(Q_S)\leq C\,\mu(R),$$
by \rf{eqBB}, \rf{eqI}, \rf{eqII}, and \rf{eqIII}.

Gathering the estimates obtained for the the different types of trees, we get
$$\sum_{S\in\tree:\, Q_S\subset R} \mu(Q_S)\leq C\,\mu(R),$$
as wished. So the triple $(\BB,\GG,\tree)$ is a corona decomposition,  and Theorem \ref{thm22} is proved.

\begin{rem} The following result is due to Mattila, Melnikov and Verdera and is related to \cite{MMV} although it is unpublished. Let $K(z)=|z|^{-1}\Omega (z /|z|), \ z \in \C \stm \{0\},$ where $\Omega$ is an odd function on the unit circle and let $\mu$ be an AD-regular measure. Then if the permutations of $K$ are positive, the $L^2(\mu)$-boundedness of the corresponding  operator $T_{K,\mu}$ implies that $\mu$ is rectifiable. In the following we provide a sketch of the proof. Recall that $\nu$ is a tangent measure of $\mu$ at $z$ if $\nu$ is a locally finite nonzero Borel measure in $\C$ and there exist positive numbers $r_i \ra 0$ such that the measures $r_i^{-1} T_{z,r_i} \sharp \mu$ converge weakly to $\nu$, where $T_{z,r_i}(x)=(x-z) / r_i$. The set of all tangent measures of $\mu$ at $z$ is denoted by $\text{Tan}(\mu,z)$. 
By Lemma \ref{mmvcl2} we obtain that $p_K(\mu)< \infty$ and this implies easily, see \cite{lin}, that for $\mu$ a.e $z \in \C$, $\operatorname{spt} \nu$ is contained in a line for all $\nu \in \text{Tan}(\mu,z)$. Furthermore using standard arguments, as for example in \cite{v}, for $\mu$-a.e. $z \in \C$,
\begin{equation}
\label{tang}
\sup_{0<r<R<\infty}\left|\int_{B(x,R)\stm B(x,r)}K(x-y) d \nu (y)\right| < \infty \ \text{for all} \ x \in \operatorname{spt} \nu .
\end{equation}
Since every $\nu \in \text{Tan}(\mu,z)$ is AD-regular and $\operatorname{spt} \nu$ is contained in a line, (\ref{tang}) implies that $\operatorname{spt} \nu$ is the whole line, see e.g. \cite[Chapter III.1]{DS2}, and hence $\mu$ is rectifiable by \cite[Theorem 16.5]{mb}.
\end{rem}

\begin{rem1} In $\Rm$, $m>2$, results analogous to Proposition \ref{posperm} and Lemma \ref{compperm} hold for the permutations of the kernel $K(x)=\frac{x_1^{2n-1}}{|x|^{2n}},\, x=(x_1,\dots,x_m) \in \Rm \stm \{0\},\,  n\in \N,$ even though by means of different computations than the ones of Section 2. These results and their connection with $1$-rectifiability will appear in a forthcoming paper.
\end{rem1}


\end{document}